 \newcounter{enunciato}[section]
 \newtheorem{ittheorem}{Theorem}
 \newtheorem{itlemma}{Lemma}
 \newtheorem{itproposition}{Proposition}
 \newtheorem{itdefinition}{Definition} 
 \newtheorem{itcorollary}{Corollary}
 \newenvironment{theorem}{\addtocounter{enunciato}{1}
 \begin{ittheorem}}{\end{ittheorem}}
 \newenvironment{lemma}{\addtocounter{enunciato}{1}
 \begin{itlemma}}{\end{itlemma}}
 \newenvironment{corollary}{\addtocounter{enunciato}{1}
 \begin{itcorollary}}{\end{itcorollary}}
\newcommand{\halmos}{\rule{1ex}{1.4ex}}
\newenvironment{proof}{\noindent {\em Proof.}\,}
{\hspace*{\fill}$\halmos$\medskip}
\def \ba {\begin{array}}
\def \ea {\end{array}}
\def \Z {{\mathbb Z}}
\def \R {{\mathbb R}}
\def \N {{\mathbb N}}
\def \bE {\bar E}
\def \hE {\hat E}
\def \bQ {\bar Q}
\def \hQ {\hat Q}
\def \P {{\mathbb P}}
\def \E {{\mathbb E}}
\def \cL {{\mathcal L}}
\def \cD {{\mathcal D}}
\def \cP {{\mathcal P}}
\def \cR {{\mathcal R}}
\def \cC {{\mathcal C}}
\def \tr {{\rm tr}}
\def \cN {{\mathcal N}}
\def \d {\delta}
\def \a {\alpha}
\def \g {\gamma}
\def \o {\omega}
\def \Iq {I^{\rm que}}
\def \hQ {\hat{Q}}
\def \hcR {\hat{\mathcal R}}
\def \whE {\widetilde{\hat E}}
\def \wbE {\widetilde{\bar E}}
\def \wE {\widetilde E}
\def \ho {\hat \omega}
\def \bo{\bar \omega}
\def \hm {\hat \mu}
\def \bm {\bar \mu}
\def \hM {\hat M}
\def \bb {\bar\beta}
\def \hb {\hat\beta}
\def \tM { \bar M}
\def \th {\bar h}
\def \hh {\hat h}
\def \cM {\mathcal M}
\begin{document}
\title{ Copolymer with pinning:\\
variational characterization of the phase diagram}

\author{
F.\ den Hollander\\
A.A.\ Opoku\\
\vspace{0cm}\\
Mathematical Institute, Leiden University,\\ 
P.O.\ Box 9512, 2300 RA Leiden, The Netherlands.
}

\maketitle

\begin{abstract}
This paper studies a polymer chain in the vicinity of a linear interface separating two 
immiscible solvents. The polymer consists of \emph{random monomer types}, while the 
interface carries \emph{random charges}. Both the monomer types and the charges are 
given by i.i.d.\ sequences of random variables. The configurations of the polymer 
are directed paths that can make i.i.d.\ excursions of finite length above and below 
the interface. The Hamiltonian has two parts: a monomer-solvent interaction (``copolymer'') 
and a monomer-interface interaction (``pinning''). The quenched and the annealed 
version of the model each undergo a transition from a \emph{localized phase} (where 
the polymer stays close to the interface) to a \emph{delocalized phase} (where the 
polymer wanders away from the interface). We exploit the approach developed in 
\cite{ChdHo10} and \cite{BodHoOp11} to derive \emph{variational formulas} for the 
quenched and the annealed free energy per monomer. These variational formulas are 
analyzed to obtain detailed information on the critical curves separating the two 
phases and on the typical behavior of the polymer in each of the two phases. Our 
main results settle a number of open questions.

\vskip 0.5truecm
\noindent
{\it AMS} 2000 {\it subject classifications.} 60F10, 60K37, 82B27.\\
{\it Key words and phrases.} Copolymer with pinning, localization vs.\ delocalization, 
critical curve, large deviation principle, variational formulas.

\medskip\noindent
{\it Acknowledgment.} 
FdH was supported by ERC Advanced Grant VARIS 267356, AO by NWO-grant 613.000.913.

\end{abstract}


\newpage

\section{Introduction and main results}
\subsection {The model}
\label{S1.1}

{\bf 1.\ Polymer configuration.} The polymer is modeled by a directed path drawn from
the set
\begin{equation}
\label{pidef}
\Pi=\Big\{\pi=(k,\pi_{k})_{k\in\N_0}\colon\,\pi_0=0,\,\mathrm{sign}(\pi_{k-1})
+\mathrm{sign}(\pi_k) \neq 0,\,\pi_k\in\Z\,\,\forall\,k\in\N\Big\}
\end{equation}
of directed paths in $\N_0\times\Z$ that start at the origin and visit the interface 
$\N_0\times\{0\}$ when switching from the lower halfplane to the upper halfplane, and 
vice versa. Let $P^*$ be the path measure on $\Pi$ under which the excursions away from 
the interface are i.i.d., lie above or below the interface with equal probability, and 
have a length distribution $\rho$ on $\N$  with a \emph{polynomial tail}:
\begin{equation}
\label{rhocond}
\lim_{ {n\to\infty} \atop{\rho(n)>0} } 
\frac{\log\rho(n)}{\log n} = -\alpha \mbox{ for some } \alpha\in [1,\infty). 
\end{equation}
The support of $\rho$ is assumed to satisfy the following non-sparsity condition 
\begin{equation}
\lim_{m\rightarrow\infty}\frac1m \log \sum_{n>m} \rho(n)=0.
\end{equation}

Denote by $\Pi_n,P^*_n$ the restriction of $\Pi,P^*$ to $n$-step paths that end at the 
interface.

\medskip\noindent
{\bf 2.\ Disorder.} Let $\hE$ and $\bE$ be subsets of $\R$. The edges of the paths in $\Pi$ 
are labeled by an i.i.d.\ sequence of $\hE$-valued random variables $\ho=(\ho_i)_{i\in\N}$ 
with common law $\hm$, modeling the random monomer types. The sites at the interface are
labeled by an i.i.d.\ sequence of $\bE$-valued random variables $\bo=(\bo_i)_{i\in\N}$ 
with common law $\bm$, modeling the random charges. In the sequel we abbreviate $\o
=(\o_{i})_{i\in\N}$ with $\o_i=(\ho_i,\bo_i)$ and assume that $\ho$ and $\bo$ are 
independent. We further assume, without loss of generality, that both $\ho_1$ and $\bo_1$ 
have zero mean, unit variance, and satisfy
\begin{equation}
\label{mgffin}
\hM(t) = \log \int_{\hE} e^{-t\ho_1} \hm(d\ho_1) < \infty \quad \forall\,t\in\R,
\qquad 
\tM(t)= \log \int_{\bE} e^{-t\bo_1} \bm(d\bo_1) < \infty \quad \forall\,t\in\R.
\end{equation}
We write $\P$ for the law of $\o$, and $\P_{\ho}$ and $\P_{\bo}$ for the laws of $\ho$ 
and $\bo$.

\medskip\noindent
{\bf 3.\ Path measure.} Given $n\in\N$ and $\o$, the \emph{quenched copolymer with pinning} 
is the path measure given by
\begin{equation}
\label{copwad}
\widetilde P^{\hb,\hh,\bb,\th,\o}_n(\pi)
= \frac{1}{\tilde Z^{\hb,\hh,\bb,\th,\o}_n}
\,\exp\left[\widetilde H_n^{\hb,\hh,\bb,\th,\o}(\pi)\right]\,P^*_n(\pi),
\qquad \pi\in\Pi_n,
\end{equation}
where $\hb,\hh,\bb\geq0$ and $\th\in\R$ are parameters, $\widetilde Z^{\hb,\hh,\bb,\th,\o}_n$ 
is the normalizing partition sum, and
\begin{equation}
\label{copadhamil}
\widetilde H_n^{\hb,\hh,\bb,\th,\o}(\pi)
= \hb\sum_{i=1}^n (\ho_i+\hh)\,\Delta_i+\sum_{i=1}^{n} (\bb\,\bo_i-\th)\delta_i
\end{equation}
is the interaction Hamiltonian, where $\delta_i=1_{\{\pi_i=0\}} \in \{0,1\}$ and 
$\Delta_i=\mathrm{sign}(\pi_{i-1},\pi_i)\in \{-1,1\}$ (the $i$-th edge is below
or above the interface).  

\medskip\noindent
{\bf Key example:}
The choice $\hE=\bE=\{-1,1\}$ corresponds to the situation where the upper halfplane 
consists of oil, the lower halfplane consists of water, the monomer types are either 
hydrophobic ($\ho_i=1$) or hydrophilic ($\ho_i=-1$), and the charges are either positive 
($\bo_i=1$) or negative ($\bo_i=-1$); see Fig.~\ref{fig-copolex}. In \eqref{copadhamil}, 
$\hb$ and $\bb$ are the \emph{strengths} of the monomer-solvent and monomer-interface 
interactions, while $\hh$ and $\th$ are the \emph{biases} of these interactions. If $P^*$ 
is the law of the directed simple random walk on $\Z$,  then \eqref{rhocond} holds 
with $\alpha=\tfrac32$.

\begin{figure}[htbp] 
\vspace{-.50cm}
\begin{center}
\includegraphics[scale = .5]{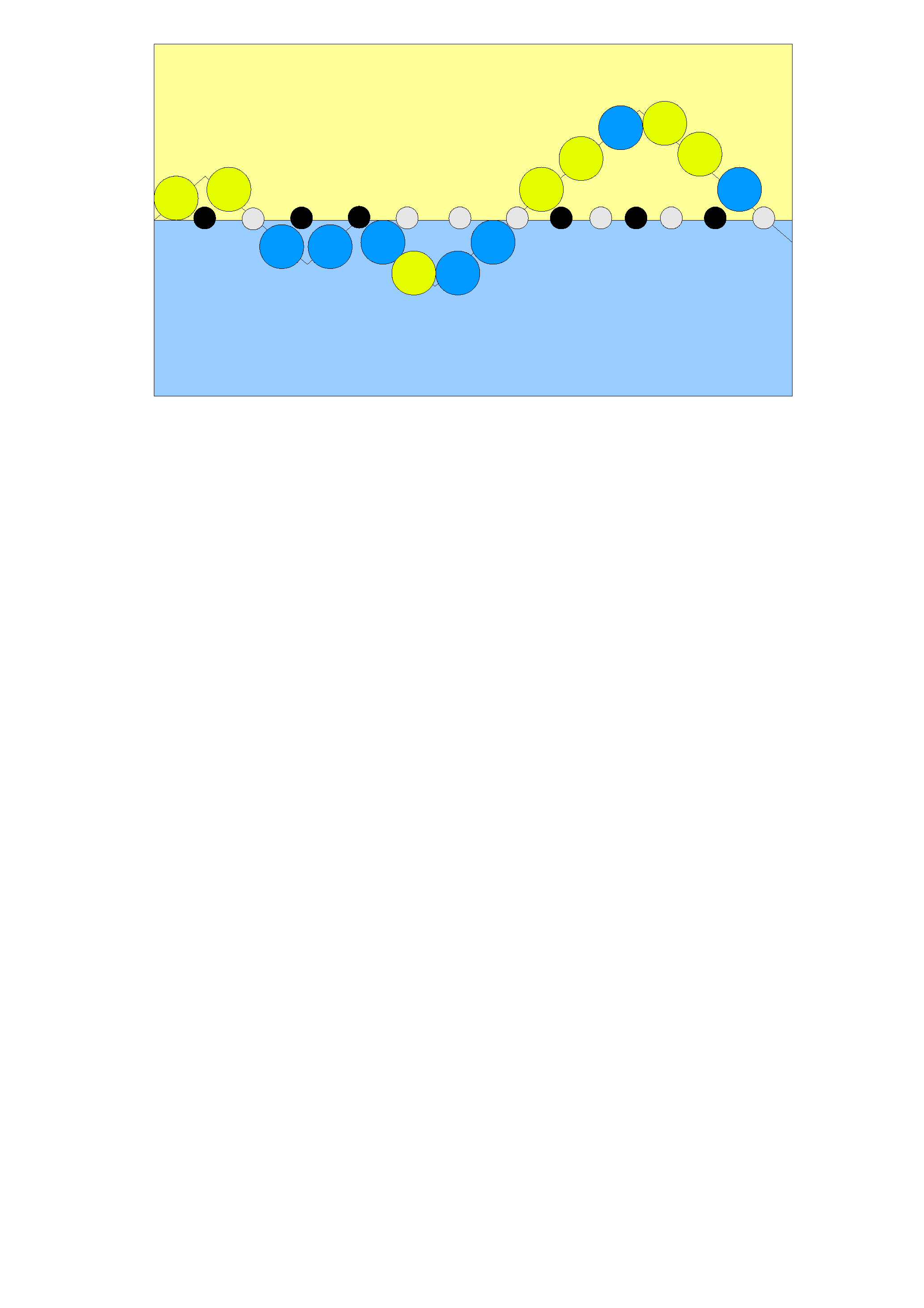}
\end{center}
\vspace{0cm}
\vspace{-10.5cm}
\caption{\small A directed polymer near a linear interface, separating oil in the upper
halfplane and water in the lower halfplane. Hydrophobic monomers in the polymer are 
light shaded, hydrophilic monomers are dark shaded. Positive charges at the interface 
are light shaded, negative charges are dark shaded.} 
\label{fig-copolex}
\vspace{-.1cm}
\end{figure}
 
In the literature, the model without the monomer-interface interaction ($\bb=\bar h=0$) is 
called the \emph{copolymer model}, while the model without the monomer-solvent interaction 
($\hh=\hb=0$) is called the \emph{pinning model} (see Giacomin~\cite{Gi07} and den 
Hollander~\cite{dHo09} for an overview). The model with both interactions is referred to
as the \emph{copolymer with pinning model}. In the sequel, if $k$ is a quantity associated 
with the combined model, then $\hat k$ and $\bar k$ denote the analogous quantities in
the copolymer model, respectively, the pinning model.

 
\subsection{Quenched excess free energy and critical curve}
\label{S1.2}
 
The \emph{quenched free energy} per monomer
\begin{equation}
\label{copadfe}
f^{\rm que}(\hb,\hh,\bb,\th) = \lim_{n\to\infty} \frac1n\,\log\widetilde
Z^{\hb,\hh,\bb,\th,\o}_n
\end{equation}
exists $\o$-a.s.\ and in $\P$-mean (see e.g.\ Giacomin~\cite{Gi04}). By restricting 
the partition sum $\widetilde Z_n^{\hb,\hh,\bb,\th,\o}$ to paths that stay above 
the interface up to time $n$, we obtain, using the law of large numbers for $\ho$, that
$f^{\rm que}(\hb,\hh,\bb,\th)\geq \hb\,\hh$. The {\em quenched excess free energy} per 
monomer
\begin{equation}
\label{copadefe}
g^{\rm que}(\hb,\hh,\bb,\th) = f^{\rm que}(\hb,\hh,\bb,\th)-\hb\,\hh
\end{equation}
corresponds to the Hamiltonian
\begin{equation}
\label{copadmodhamil}
H_n^{\hb,\hh,\bb,\th,\o}(\pi)=\hb\sum_{i=1}^n (\ho_i+\hh)\,\left[\Delta_i-1\right]
+\sum_{i=1}^n (\bb\,\bo_i-\th)\delta_i
\end{equation}
and has two phases
\begin{equation}
\label{copadquephases}
\begin{aligned}
\cL^{\rm que} &= \left\{(\hb,\hh,\bb,\th)\in[0,\infty)^3\times\R\colon\, 
g^{\rm que}(\hb,\hh,\bb,\th)>0\right\},\\
\cD^{\rm que} &= \left\{(\hb,\hh,\bb,\th)\in[0,\infty)^3\times\R\colon\,
g^{\rm que}(\hb,\hh,\bb,\th)=0\right\},
\end{aligned}
\end{equation}
called the {\em quenched localized phase} (where the strategy of staying close to the 
interface is optimal) and the {\em quenched delocalized phase} (where the strategy of 
wandering away from the interface is optimal). The map $\hh\mapsto g^{\rm que}(\hb,\hh,
\bb,\th)$ is non-increasing and convex for every $\hb,\bb\geq0$ and $\th\in\R$. Hence, 
$\cL^{\rm que}$ and $\cD^{\rm que}$ are separated by a single curve 
\begin{equation}
\label{copadcc}
h_c^{\rm que}(\hb,\bb,\th) 
= \inf\left\{\hh\geq 0\colon\,g^{\rm que}(\hb,\hh,\bb,\th)=0\right\},
\end{equation}
called the {\em quenched critical curve}. 

In the sequel we write $\hat g^{\rm que}(\hb,\hh)$, $\hat h_c^{\rm que}(\hb)$, $\hat
\cL^{\rm que}$, $\hat\cD^{\rm que}$ for the analogous quantities in the copolymer model 
($\bb=\th=0$), and $\bar g^{\rm que}(\bb,\th)$, $\bar h_c^{\rm que}(\bb)$, $\bar\cL^{\rm que}$, 
$\bar\cD^{\rm que}$ for the analogous quantities in the pinning model ($\hb= \hh=0$).

 
\subsection{Annealed excess free energy and critical curve}
\label{S1.3}
 
The {\em annealed excess free energy} per monomer is given by
\begin{equation}
\label{copadaefe}
g^{\rm ann}(\hb,\hh,\bb,\th) = \lim_{n\to\infty} \frac1n \log Z_n^{\hb,\hh,\bb,\th}
= \lim_{n\to\infty} \frac1n \log \E\left(Z_n^{\hb, \hh,\bb,\th,\o}\right),
\end{equation}
where $\E$ is the expectation w.r.t.\ the joint disorder distribution $\P$. This also 
has two phases,
\begin{equation}
\label{copadannphases}
\begin{aligned} 
\cL^{\rm ann} &= \left\{(\hb,\hh,\bb,\th)\in [0,\infty)^3\times\R\colon\, 
g^{\rm ann}(\hb,\hh,\bb,\th)>0\right\},\\
\cD^{\rm ann} &= \left\{(\hb,\hh,\bb,\th)\in [0,\infty)^3\times\R\colon\, 
g^{\rm ann}(\hb,\hh,\bb,\th)=0\right\},
\end{aligned}
\end{equation}
called the {\em annealed localized phase} and the {\em annealed delocalized phase}, 
respectively. The two phases are separated by the {\em annealed critical curve}
\begin{equation}
\label{copadacc}
h_c^{\rm ann}(\hb,\bb,\th) = \inf\left\{\hh\geq 0\colon\, 
g^{\rm ann}(\hb,\hh,\bb,\th)=0\right\}.
\end{equation}
Let $\cN(g) = \sum_{n\in\N} e^{-ng}\,\rho(n),\, g\in\R$. We will show in Section~\ref{S3.2} 
that
\begin{equation}
\label{ganncomb}
\begin{aligned}
&g^{\rm ann}(\hb,\hh,\bb,\th) \mbox{ is the unique $g$-value at which}\\
&\log \left[\tfrac12 \cN(g)+\tfrac12 \cN\big(g-[\hat M(2\hb)-2\hb \hh]\big)\right] 
+\bar M(-\bb)-\th \mbox{ changes sign}.
\end{aligned}
\end{equation}

\noindent
{\bf Remark:}  The annealed model is exactly solvable. In fact, sharp asymptotics estimates on the 
annealed partition function that go beyond the free energy can  be derived by using  the techniques 
 in  Giacomin~\cite{Gi07}, Section 2.2.  We will derive variational formulas for  the annealed and 
 the quenched free energies. The  annealed variational problem turns out to be easy, but we need 
 it as an object of comparison in our study of the quenched variational formula.

It follows from \eqref{ganncomb} that for the copolymer model ($\bb=\th=0$) 
\begin{equation}
\label{copannfeg}
\begin{aligned}
\hat{g}^{\rm\, ann}(\hb,\hh) &= 0 \vee [\hat M(2\hb)-2\hb\hh],\\
\hat h_c^{\rm ann}(\hb) &= (2\hb)^{-1}\hM(2\hb),
\end{aligned}
\end{equation}
and for the pinning model ($\hb=\hh=0$)
\begin{equation}
\label{pinannfeg}
\begin{aligned}
&\bar g^{\rm ann}(\bb,\th) \mbox{ is the unique $g$-value for which } 
\cN(g) = e^{-(0\vee[\bar M(-\bb)-\th])},\\
&\bar h_c^{\rm ann}(\bb) = \bar M(-\bb).
\end{aligned}
\end{equation}
For more details on these special cases, see Giacomin~\cite{Gi07} and den 
Hollander~\cite{dHo09}, and references therein.  
 

\subsection{Main results}
\label{S1.4}

Our variational characterization of the excess free energies and the critical curves
is contained in the following theorem. For technical reasons, in the sequel we 
\emph{exclude} the case $\hb>0$, $\hh=0$ for the quenched version.

\begin{theorem}
\label{freeenegvar} 
Assume  {\rm (\ref{rhocond})} and {\rm (\ref{mgffin})}.\\ 
(i) For every $\alpha \geq 1$ and $\hb,\hh,\bb\geq 0$, there are lower semi-continuous, 
convex and non-increasing functions
\begin{equation}
\label{Sannounce}
\begin{aligned}
&g\mapsto S^\mathrm{que}(\hb,\hh,\bb;g),\\
&g\mapsto S^\mathrm{ann}(\hb,\hh,\bb;g),
\end{aligned}
\end{equation}
given by explicit variational formulas such that, for every $\th\in\R$,
\begin{equation}
\label{gquegannvarfor}
\begin{aligned}
g^\mathrm{que}(\hb,\hh,\bb,\th) &= \inf\{g\in\R\colon\,S^\mathrm{que}(\hb,\hh,\bb;g)-\th<0\},\\
g^\mathrm{ann}(\hb,\hh,\bb,\th) &= \inf\{g\in\R\colon\,S^\mathrm{ann}(\hb,\hh,\bb;g)-\th<0\}.
\end{aligned}	
\end{equation} 
(ii) For every $\alpha \geq 1$, $\hb>0$, $\bb\geq 0$ and $\th\in\R$,
\begin{equation}
\label{ccquccannvarfor}
\begin{aligned}
h_c^\mathrm{que}(\hb,\bb,\th) 
&= \inf\left\{\hh>0\colon\,S^\mathrm{que}(\hb,\hh,\bb;0)-\th\leq 0\right\},\\
h_c^\mathrm{ann}(\hb,\bb,\th) 
&= \inf\left\{\hh\geq 0\colon\,S^\mathrm{ann}(\hb,\hh,\bb;0)-\th\leq0\right\}.
\end{aligned}	  
\end{equation}
\end{theorem}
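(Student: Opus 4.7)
The plan is to follow the strategy developed in \cite{ChdHo10} and \cite{BodHoOp11}, combining an excursion decomposition of the partition sum with a word-level large deviation principle (LDP) for the disorder sequence seen along the polymer. First I would use the independence of excursions under $P^*$ and the local form of \eqref{copadmodhamil} to rewrite $\widetilde Z^{\hb,\hh,\bb,\th,\o}_n$ as a sum over the interface return times $0 = s_0 < s_1 < \cdots < s_N = n$ and excursion signs $\sigma_k \in \{\pm 1\}$: each excursion $(s_{k-1}, s_k]$ contributes an energy depending only on its sign, its length $\tau_k = s_k - s_{k-1}$, and the slice $(\ho_{s_{k-1}+1},\dots,\ho_{s_k};\bo_{s_k})$ of $\o$. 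The partition sum thus factorizes as a product of excursion weights times $\rho(\tau_k)$, i.e.\ a renewal sum with disorder-dependent rewards.

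Next I would apply the Birkner--Greven--den Hollander word-level LDP, in the form employed in \cite{ChdHo10,BodHoOp11}, to the empirical distribution of the disorder words cut out by the excursions. Under \eqref{rhocond} and \eqref{mgffin} this produces rate functions $I^{\rm que} \geq I^{\rm ann}$ on the set $\cR$ of shift-invariant probability measures on the appropriate word space. A Varadhan-type argument combined with the renewal structure then yields
\begin{equation*}
g^{\rm que}(\hb,\hh,\bb,\th) = \sup_{Q\in\cR} \frac{\Phi_0(Q;\hb,\hh,\bb) - \th - I^{\rm que}(Q)}{\chi(Q)},
\end{equation*}
with the analogous expression in the annealed case, where $\chi(Q) = \E_Q[\tau_1] \geq 1$ is the mean excursion length under $Q$ and $\Phi_0$ collects all per-excursion Hamiltonian contributions not involving $\th$ (the copolymer term and the $\bb\bo$-part of the pinning term).

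Because $\th$ enters linearly with unit coefficient per excursion, I would then define
\begin{equation*}
S^{\rm que}(\hb,\hh,\bb;g) = \sup_{Q\in\cR} \Big[\Phi_0(Q;\hb,\hh,\bb) - g\,\chi(Q) - I^{\rm que}(Q)\Big],
\end{equation*}
and $S^{\rm ann}$ analogously with $I^{\rm ann}$ replacing $I^{\rm que}$. As pointwise suprema of affine functions of $g$ with slopes $-\chi(Q) \leq -1$, both $S^{\rm que}(\cdot;g)$ and $S^{\rm ann}(\cdot;g)$ are automatically lower semi-continuous, convex, and non-increasing (in fact Lipschitz decreasing with slope $\leq -1$) in $g$. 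Multiplying the variational inequality for $g^{\rm que}$ through by $\chi(Q) > 0$ and taking the sup over $Q$ converts it into the statement that $\{g : S^{\rm que}(\hb,\hh,\bb;g) - \th < 0\}$ is exactly the open half-line $(g^{\rm que}(\hb,\hh,\bb,\th), \infty)$; this is \eqref{gquegannvarfor}, and the annealed case is identical. Part~(ii) then follows from part~(i) together with the definitions \eqref{copadcc}, \eqref{copadacc}, upon noting that $(\hb,\hh,\bb,\th) \in \cD^{\rm que}$ is equivalent to $S^{\rm que}(\hb,\hh,\bb;0) \leq \th$, and similarly in the annealed case.

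The main obstacle is the second step: identifying the quenched rate function $I^{\rm que}$ (which involves a specific relative-entropy construction for the word process together with a concatenation entropy correction, in the spirit of \cite{BodHoOp11}) and rigorously justifying the Varadhan-type interchange of supremum and limit in the presence of the unbounded exponential weights from \eqref{mgffin} and the heavy-tailed $\rho$ from \eqref{rhocond}. The non-sparsity condition is needed so that the annealed rate function takes its expected product form, while the polynomial tail \eqref{rhocond} controls $\chi(Q)$ on measures supported on long excursions. The exclusion of $\hb > 0$, $\hh = 0$ in the quenched part~(i) reflects a well-known discontinuity of $I^{\rm que}$ at the copolymer-only boundary, where the supremum defining $S^{\rm que}$ may fail to be attained on $\cR$ without an additional compactification argument.
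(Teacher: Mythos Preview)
Your overall strategy—excursion decomposition, then the Birkner--Greven--den Hollander word-level LDP, then a variational formula—matches the paper's, and your definition of $S^{\rm que}$ as a supremum of affine functions of $g$ correctly yields the lower semi-continuity, convexity and monotonicity in part~(i). Part~(ii) is indeed a short deduction from part~(i) and the definitions of the critical curves, exactly as you say.

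There is, however, a genuine gap in your second step. You write that ``a Varadhan-type argument combined with the renewal structure then yields'' the ratio formula
\[
g^{\rm que}(\hb,\hh,\bb,\th) \;=\; \sup_{Q}\,\frac{\Phi_0(Q;\hb,\hh,\bb) - \th - I^{\rm que}(Q)}{\chi(Q)}.
\]
But Varadhan's lemma applied to the word-level LDP controls quantities of the form $\tfrac{1}{N}\log E^*[\exp(N F(R_N^\o))]$, i.e.\ rates \emph{per word}, whereas $g^{\rm que}$ is a rate \emph{per monomer}. The two scales differ by the random factor $\chi(Q)=m_Q$, and the passage from one to the other is precisely the nontrivial point; it is not a consequence of Varadhan alone, and the ratio formula is problematic when $m_Q=\infty$. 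The paper does \emph{not} go through any ratio formula. Instead it introduces the generating function $\sum_{n}Z_n^{\hb,\hh,\bb,\th,\o}e^{-gn}=\sum_N F_N^{\hb,\hh,\bb,\th,\o}(g)$, identifies $g^{\rm que}$ as its radius of convergence, and observes that the right-hand side converges iff $\limsup_N\tfrac{1}{N}\log F_N(g)<0$. Now $F_N(g)$ \emph{is} a per-word quantity, namely $(\cN(g)e^{-\th})^N\,E^*_g[\exp(N(\Phi_{\hb,\hh}(R_N^\o)+\bb\Phi(R_N^\o)))]$, so Varadhan applied to the tilted word law $\rho_g$ gives this limsup as $S^{\rm que}(\hb,\hh,\bb;g)-\th$ directly, with
\[
S^{\rm que}(\hb,\hh,\bb;g)=\sup_{Q\in\cC^{\rm fin}\cap\cR}\big[\bb\Phi(Q)+\Phi_{\hb,\hh}(Q)-g\,m_Q-I^{\rm ann}(Q)\big].
\]
The $e^{-gn}$ weight is what converts the monomer scale into the word scale and makes the argument go through; your proposal skips this device and so does not explain how the scales are reconciled.

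Two smaller points. First, the paper's $S^{\rm que}$ is written with $I^{\rm ann}$ and the constraint $Q\in\cC^{\rm fin}\cap\cR$ (equivalently $I^{\rm que}_g$ for $g>0$), not with the $g=0$ rate function $I^{\rm que}$ over all $Q$; your version with $I^{\rm que}$ corresponds to the paper's alternative formula $S_*^{\rm que}$, which is shown to agree with $S^{\rm que}$ only at $g=0$, and the equality $s^{\rm que}=S^{\rm que}$ for $g>0$ requires a separate truncation/approximation argument (Appendix~B of the paper). Second, the exclusion of $\hb>0$, $\hh=0$ is not due to any discontinuity of $I^{\rm que}$; it is because the bound $\Phi_{\hb,\hh}(Q)\le \gamma\,h(\pi_1 Q\mid q_{\rho,\hm})+K$ (needed to control the unbounded exponent in Varadhan's lemma) requires $\hh>0$.
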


\noindent
The variational formulas for $S^\mathrm{que}(\hb,\hh,\bb;g)$ and $S^\mathrm{ann}(\hb,\hh,
\bb;g)$ are given in Theorems~\ref{varfloc}--\ref{varflocann} in Section~\ref{S3}. 
Figs.~\ref{fig-varfe}--\ref{fig-copadvarhs} in Sections~\ref{S3} and \ref{Sec5} show 
how these functions depend on $\hb$, $\hh$, $\bb$ and $g$, which is crucial for our analysis.

Next, we state seven corollaries that are consequences of the variational formulas. 
The content of these corollaries will be discussed in Section~\ref{S1.6}. The 
first corollary looks at the excess free energies. Put
\begin{equation}
\label{lann12}
\begin{split}
\bar h_\ast(\hb,\hh,\bb)&=\bar M(-\bb)+\log\left(\tfrac12\left[1+\cN\big(|\hM(2\hb)-2\hb\hh|\big)
\right]\right),\cr
\cL^{\rm ann}_1
&=\left\{(\hb,\hh,\bb,\bar h)\in[0,\infty)^3\times\R\colon\,(\hb,\hh)\in\hat 
\cL^{\rm ann}\right\},\cr
\cL^{\rm ann}_2
&=\left\{(\hb,\hh,\bb,\bar h)\in[0,\infty)^3\times\R\colon\,(\hb,\hh)\in\hat 
\cD^{\rm ann},\bar h<\bar h_\ast(\hb,\hh,\bb)\right\}.
\end{split}
\end{equation}

\begin{corollary}
\label{freeeneggap} 
(i) For every $\alpha\geq 1$, $\hb>0$ and $\bb\geq0$, $g^\mathrm{que}(\hb,\hh,\bb,\th)$ 
and $g^\mathrm{ann}(\hb,\hh,\bb,\th)$ are the unique $g$-values that solve the equations
\begin{equation}
\begin{split}
S^{\rm que}(\hb,\hh,\bb;g)
&= \bar h, \quad \text{if }  \hh>0 , \,\bar h\leq 
S^{\rm que}(\hb,\hh,\bb;0), 
\cr
S^{\rm ann}(\hb,\hh,\bb;g)
&=\bar h, \quad \text{if } \hh\geq 0,\, 
\bar h\leq \bar h_{\ast}(\hb,\hh,\bb).
\end{split}
\end{equation}
(ii) The annealed localized phase $\cL^{\rm ann}$ admits the decomposition $\cL^{\rm ann}
=\cL^{\rm ann}_1\cup \cL^{\rm ann}_2$.\\
\noindent
(iii) On $\cL^{\rm ann}$,
\begin{equation}
g^\mathrm{que}(\hb,\hh,\bb,\th) < g^\mathrm{ann}(\hb,\hh,\bb,\th),
\end{equation}
with the possible exception of the case where $m_\rho=\sum_{n\in\N} n\rho(n)=\infty$ and 
$\th=\bar h_\ast(\hb,\hh,\bb)$.\\
(iv) For every $\alpha\geq 1$ and $\hb,\hh,\bb\geq0$, 
\begin{equation}
g^\mathrm{ann}(\hb,\hh,\bb,\th)
\left\{\begin{array}{ll}
=\hat g^{\rm ann}(\hb,\hh), &\mbox{if } \th\geq \bar h_\ast(\hb,\hh,\bb),\\
>\hat g^{\rm ann}(\hb,\hh), &\mbox{otherwise.}
\end{array}
\right.
\end{equation}
\end{corollary}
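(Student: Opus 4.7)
My plan is to address the four parts in the order (ii), (iv), (i), (iii): the first two are direct manipulations of the annealed sign-change identity (\ref{ganncomb}); (i) follows from analytic properties of $S^\mathrm{que}$ and $S^\mathrm{ann}$ furnished by Theorem~\ref{freeenegvar}(i); and (iii) is the genuinely hard comparison.

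Set $A:=\hat M(2\hb)-2\hb\hh$ and
\[
F(g):=\log\left[\tfrac12\cN(g)+\tfrac12\cN(g-A)\right]+\bar M(-\bb)-\bar h,
\]
a non-increasing function of $g$ whose unique zero is $g^\mathrm{ann}(\hb,\hh,\bb,\bar h)$ by (\ref{ganncomb}). The polynomial tail (\ref{rhocond}) forces $\cN(t)=+\infty$ for every $t<0$. For (ii), note that $(\hb,\hh,\bb,\bar h)\in\cL^\mathrm{ann}$ iff $F(0)>0$. If $A>0$ (i.e.\ $(\hb,\hh)\in\hat\cL^\mathrm{ann}$) then $\cN(-A)=\infty$ so $F(0)=+\infty$, and $(\hb,\hh,\bb,\bar h)\in\cL^\mathrm{ann}$ for every $\bar h$; this recovers $\cL^\mathrm{ann}_1$. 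If $A\leq 0$ then $\cN(-A)=\cN(|A|)\leq 1$ is finite and $F(0)>0$ simplifies to $\bar h<\bar h_\ast(\hb,\hh,\bb)$; this gives $\cL^\mathrm{ann}_2$. The two pieces are disjoint (living over $\hat\cL^\mathrm{ann}$ and $\hat\cD^\mathrm{ann}$) and exhaust $\cL^\mathrm{ann}$. For (iv), a short calculation in each of the cases $A>0$, $A=0$, $A<0$ yields the uniform identity $F(\hat g^\mathrm{ann}(\hb,\hh))=\bar h_\ast-\bar h$; combined with the fact that $F\equiv+\infty$ on $(-\infty,A)$ whenever $A>0$, this shows $g^\mathrm{ann}=\hat g^\mathrm{ann}$ exactly when $F(\hat g^\mathrm{ann})\leq 0$, i.e.\ $\bar h\geq\bar h_\ast$, and $g^\mathrm{ann}>\hat g^\mathrm{ann}$ otherwise.

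For (i), Theorem~\ref{freeenegvar}(i) gives that $g\mapsto S^\mathrm{que}(\hb,\hh,\bb;g)$ and $g\mapsto S^\mathrm{ann}(\hb,\hh,\bb;g)$ are lsc, convex, and non-increasing. Such a function of one real variable is continuous on the interior of its effective domain and strictly decreasing on $\{g:S(\cdot;g)>\inf_g S(\cdot;g)\}$. The hypotheses $\hh>0$, $\bar h\leq S^\mathrm{que}(\hb,\hh,\bb;0)$ (and the counterpart involving $\bar h_\ast$ in the annealed case, justified by (ii) and (iv)) place the $g$-value defined by the infimum in (\ref{gquegannvarfor}) squarely in this strictly decreasing regime, so $S(\cdot;g)=\bar h$ admits a unique solution.

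Part (iii) is the main obstacle. By (i), on $\cL^\mathrm{ann}$ both $g^\mathrm{que}$ and $g^\mathrm{ann}$ are uniquely determined by $S^\mathrm{que}(\cdot;g)=\bar h$ and $S^\mathrm{ann}(\cdot;g)=\bar h$, with strict monotonicity at the respective solutions. It therefore suffices to prove
\[
S^\mathrm{que}(\hb,\hh,\bb;g^\mathrm{ann})<S^\mathrm{ann}(\hb,\hh,\bb;g^\mathrm{ann}).
\]
The variational formulas in Section~\ref{S3} express both $S$'s as constrained infima over measures, with the quenched minimand differing from the annealed one by a strictly positive Cram\'er-type relative-entropy contribution fed by the non-degenerate unit variance of $\ho_1$ and $\bo_1$; strict convexity of $\hat M$ and $\bar M$ then propagates this pointwise gap to a strict inequality between the infima. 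The delicate step is verifying that the gap persists after taking the infimum, uniformly on the open set $\cL^\mathrm{ann}$. The excluded configuration $m_\rho=\infty$, $\bar h=\bar h_\ast$ corresponds precisely to the boundary at which the annealed optimizer concentrates on arbitrarily long excursions, so the Cram\'er penalty can vanish in the limit and strict inequality is permitted to fail there.
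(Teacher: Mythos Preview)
Your treatment of (ii), (iv) and (i) is essentially correct and matches the paper's approach: (ii) and (iv) are direct readings of the sign-change identity \eqref{ganncomb}, and (i) follows from the monotonicity/convexity of $g\mapsto S^{\cdot}(\hb,\hh,\bb;g)$ established in Theorems~\ref{varfloc}--\ref{varflocann}.

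Your argument for (iii), however, has a genuine gap and misidentifies the mechanism. First, the variational formulas \eqref{Sdef} and \eqref{Sandef} are \emph{suprema}, not infima. More importantly, for $g>0$ the quenched and annealed integrands are \emph{identical}; what differs is the constraint set: the annealed supremum runs over $\cC^{\mathrm{fin}}$, the quenched over $\cC^{\mathrm{fin}}\cap\cR$. There is no ``additional Cram\'er-type relative-entropy term'' in the quenched functional at positive $g$ (that description fits only the $g=0$ formula \eqref{Sdefalt}). So the inequality $S^{\mathrm{que}}<S^{\mathrm{ann}}$ cannot be obtained by comparing integrands pointwise; one must show that the annealed supremum is attained \emph{away} from $\cR$.

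The paper carries this out in Lemma~\ref{auxlemgap}: the annealed problem has a unique maximizer $Q_{\hb,\hh,\bb;g}=(q_{\hb,\hh,\bb;g})^{\otimes\N}$ given explicitly in \eqref{qbhbb}, which lies outside $\cR$. The hard work is proving that $I^{\mathrm{ann}}(Q_{\hb,\hh,\bb;g})=h(q_{\hb,\hh,\bb;g}\mid q_{\rho,\hm\otimes\bm})<\infty$; this requires careful moment estimates (Steps~2--5 of Section~\ref{Sec4.2.2}) and is precisely where the condition ``$g>\hat g^{\mathrm{ann}}(\hb,\hh)$, or $g=\hat g^{\mathrm{ann}}(\hb,\hh)$ with $m_\rho<\infty$'' enters, explaining the excluded case. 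Once finiteness is established, compactness of the entropy level set $\{I^{\mathrm{ann}}\leq M\}$ together with closedness of $\cR$ within it (Birkner~\cite{Bi08}) gives a ball around $Q_{\hb,\hh,\bb;g}$ disjoint from $\cR$, forcing the quenched supremum to be strictly smaller. Your sketch contains none of this: you have not produced the maximizer, not shown it avoids $\cR$, not established the finiteness of its entropy, and not invoked the compactness/closedness argument that converts ``maximizer not in $\cR$'' into a strict gap.
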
 

The next four corollaries look at the critical curves.

\begin{corollary}
\label{hquehannvarfor} 
For every $\alpha\geq 1$, $\hb>0$ and $\bb\geq0~$, the maps 
\begin{equation}
\label{Sannounce*}
\begin{aligned}
&\hh\mapsto S^\mathrm{que}(\hb,\hh,\bb;0),\\
&\hh \mapsto S^\mathrm{ann}(\hb,\hh,\bb;0),
\end{aligned}
\end{equation}
are convex and non-increasing on $(0,\infty)$. Both critical curves are continuous and 
non-increasing in $\th$. Moreover (see Figs.~{\rm \ref{fig-copadvarhc}--\ref{fig-copadvarhannc}}), 
\begin{equation}
\label{ccqueflatp}
h_c^\mathrm{que}(\hb,\bb,\th)
=\left\{\begin{array}{ll}
\infty, &\mbox{if } \th\leq\th_c^{\rm que}(\bb)-\log 2,\\
\hat h_c^{\rm ann}(\hb/\a), &\mbox{if } \th> s^{\ast}(\hb,\bb,\alpha),\\
h_\ast^{\rm que}(\hb,\bb,\th), &\mbox{otherwise},
\end{array}
\right.
\end{equation}
and 
\begin{equation}
\label{ccannflatp}
h_c^\mathrm{ann}(\hb,\bb,\th)
=\left\{\begin{array}{ll}
\infty, &\mbox{if } \th\leq\th_c^{\rm ann}(\bb)-\log 2,\\
\hat h_c^{\rm ann}(\hb), &\mbox{if } \th> \th_c^{\rm ann}(\bb),\\
h_\ast^{\rm ann}(\hb,\bb,\th), &\mbox{otherwise}.
\end{array}
\right.
\end{equation}
where 
\begin{equation}
\label{sast}
s^{\ast}(\hb,\bb,\alpha) = S^\mathrm{que}\left(\hb,\hat h_c^{\rm ann}
(\hb/\a),\bb;0\right) \in \left([\th_c^{\rm que}(\bb)-\log 2]\vee0,\infty\right],
\end{equation}
and  $h_\ast^{\rm que}(\hb,\bb,\th)$ and $h_\ast^{\rm ann}
(\hb,\bb,\th)$ are the unique $\hh$-values that solve the equations 
\begin{equation}
\label{qccth}
\begin{split}
S^\mathrm{que}(\hb,\hh,\bb;0) &= \th,\cr
S^\mathrm{ann}(\hb,\hh,\bb;0) &= \th.
\end{split}
\end{equation}
In particular, both $h_\ast^{\rm que}(\hb,\bb,\th)$ and $h_\ast^{\rm ann}
(\hb,\bb,\th)$ are convex and strictly decreasing functions of $\th$.
\end{corollary}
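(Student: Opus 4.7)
The plan is to reduce Corollary~\ref{hquehannvarfor} to a one-variable analysis of $\hh\mapsto S^{\rm que}(\hb,\hh,\bb;0)$ and $\hh\mapsto S^{\rm ann}(\hb,\hh,\bb;0)$, then apply Theorem~\ref{freeenegvar}(ii). For the annealed case, \eqref{ganncomb} evaluated at $g=0$ gives the closed-form expression
\begin{equation*}
S^{\rm ann}(\hb,\hh,\bb;0)
=\log\left[\tfrac12+\tfrac12\,\cN\!\big(2\hb\hh-\hat M(2\hb)\big)\right]+\bar M(-\bb),
\end{equation*}
from which convexity, non-increase, and the relevant endpoint values are immediate, using that $\cN$ is strictly decreasing and log-convex on its finite region, and that log of a sum of log-convex functions is convex. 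For the quenched case, convexity and non-increase in $\hh$ should be extracted from the variational formulas stated in Theorems~\ref{varfloc}--\ref{varflocann}: the parameter $\hh$ enters affinely inside a convex process-level functional, and a comparison argument (larger $\hh$ tilts the infimum in favor of above-interface excursions) gives non-increase.

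The second step identifies the endpoints and the critical thresholds. In the annealed case the display above directly yields $S^{\rm ann}(\hb,\hh,\bb;0)=+\infty$ for $\hh<\hat h_c^{\rm ann}(\hb)$ (because $\cN(g)=+\infty$ for $g<0$ by the polynomial tail \eqref{rhocond}), $S^{\rm ann}(\hb,\hat h_c^{\rm ann}(\hb),\bb;0)=\th_c^{\rm ann}(\bb)$ (since $\cN(0)=1$), and $\lim_{\hh\to\infty} S^{\rm ann}(\hb,\hh,\bb;0)=\th_c^{\rm ann}(\bb)-\log 2$ (since $\cN(\infty)=0$). The quenched analogue should read identically but with $\hat h_c^{\rm ann}(\hb)$ replaced by $\hat h_c^{\rm ann}(\hb/\a)$ and the value at the threshold replaced by $s^\ast(\hb,\bb,\a)$. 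Identifying the scaling factor $1/\a$ is the analytical core of the argument; it comes from inserting the copolymer LDP rate function (in the spirit of \cite{ChdHo10,BodHoOp11}) into the quenched variational formula at $g=0$ and tracking which tilt exponent yields a finite value of the constrained infimum.

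With these two steps in hand, the piecewise formulas \eqref{ccqueflatp}--\eqref{ccannflatp} follow directly from Theorem~\ref{freeenegvar}(ii): the quantity $\inf\{\hh:S^\bullet(\hb,\hh,\bb;0)\leq\th\}$ has three possibilities according to where $\th$ sits relative to the range of $S^\bullet$. If $\th$ is below the infimum of $S^\bullet$ (that is, $\th\leq\th_c^{\rm ann}(\bb)-\log 2$ or $\th\leq\th_c^{\rm que}(\bb)-\log 2$), the set is empty and $h_c^\bullet=\infty$. If $\th$ exceeds the finite value of $S^\bullet$ at the left endpoint of its finite support (i.e.\ $\th>\th_c^{\rm ann}(\bb)$, respectively $\th>s^\ast(\hb,\bb,\a)$), the jump at that endpoint forces the infimum to equal the endpoint itself, namely $\hat h_c^{\rm ann}(\hb)$ or $\hat h_c^{\rm ann}(\hb/\a)$. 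In the intermediate regime $S^\bullet$ is strictly decreasing, continuous and finite, so $h_c^\bullet(\th)$ is the unique solution of $S^\bullet(\hh)=\th$, defining $h_\ast^{\rm que}(\hb,\bb,\th)$ and $h_\ast^{\rm ann}(\hb,\bb,\th)$ as in \eqref{qccth}.

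Finally, continuity and non-increase of $\th\mapsto h_c^\bullet$ follow at once from the three-case formula together with matching at the junction points $\th_c^\bullet(\bb)-\log 2$ and $\th_c^{\rm ann}(\bb)$ (respectively $s^\ast(\hb,\bb,\a)$). Strict monotonicity and convexity of $h_\ast^{\rm que}$ and $h_\ast^{\rm ann}$ in $\th$ result from the corresponding properties of $S^\bullet$ on its strictly-decreasing interval, via the standard fact that the inverse of a strictly decreasing convex function is itself strictly decreasing and convex (differentiating $S^\bullet(h_\ast(\th))=\th$ twice and using $S^{\bullet\prime}<0$, $S^{\bullet\prime\prime}\geq 0$ yields $h_\ast''>0$). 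The main obstacle will be step two in the quenched case: isolating $\hat h_c^{\rm ann}(\hb/\a)$ as the left endpoint of finiteness of $S^{\rm que}(\hb,\cdot,\bb;0)$ requires a careful analysis of which entropy-penalized tilts remain admissible at $g=0$, and this is precisely the place where the polynomial tail exponent $\a$ of $\rho$ enters via the factor $1/\a$.
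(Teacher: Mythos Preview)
Your outline matches the paper's proof closely: both reduce the corollary to a one-variable study of $\hh\mapsto S^\bullet(\hb,\hh,\bb;0)$, invoke Theorem~\ref{freeenegvar}(ii), and read off the three regimes from the shape of that map. Convexity and monotonicity in $\hh$ are obtained exactly as you suggest, namely from the pointwise convexity and monotonicity of $\hh\mapsto\log\phi_{\hb,\hh}(\ho)$ inside the variational formulas \eqref{Sdef} and \eqref{Sandef}; the explicit annealed computation you wrote is precisely the paper's Lemma~\ref{asyptann}.

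One point you underweight: you treat the quenched right endpoint $\lim_{\hh\to\infty}S^{\rm que}(\hb,\hh,\bb;0)=\th_c^{\rm que}(\bb)-\log 2$ as if it ``reads identically'' from the annealed case. It does not. In the annealed formula the limit is immediate from $\cN(\infty)=0$, but in the quenched case one must show that sending $\hh\to\infty$ collapses the copolymer part to $-\log 2$ \emph{uniformly over the optimizing $Q$'s}, leaving exactly the pinning variational problem $\sup_{\bQ}[\bb\Phi(\bQ)-\bar I^{\rm que}(\bQ)]=\th_c^{\rm que}(\bb)$. The paper does this via a separate lemma (Lemma~\ref{hizlimts}, proved in Appendix~\ref{appc} through an auxiliary averaging over $\ho$); this is a genuine second technical step alongside the left-endpoint identification you correctly flagged.

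A small methodological difference: for the convexity of $\th\mapsto h_\ast^\bullet$ you invert a convex decreasing function, which is clean and correct. The paper instead appeals to convexity of the free energy in $\th$ (via H\"older) to conclude convexity of the critical curve directly. Both routes work; yours stays entirely within the $S^\bullet$ picture, while the paper's avoids any smoothness assumption on $S^\bullet$.
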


\begin{figure}[htbp]
\vspace{.8cm}
\begin{minipage}[hbt]{7.6cm}
\centering
\setlength{\unitlength}{0.50cm}
\begin{picture}(12,6)(0,0)
\put(0,0){\line(12,0){12}}
\put(4,0){\line(0,6){6}}
{\thicklines
\qbezier(2,5)(5,1.6)(9,1)
\qbezier(9,1)(11,1)(12,1)
\qbezier(2,5)(1,5)(0,5)
}
\qbezier[30](2,5)(2,3)(2,0)
\qbezier[30](2,1)(5,1)(9,1)
\qbezier[10](8.9,1)(8.9,.5)(8.9,0)
\put(1,5.2){$\infty$}
\put(7.6,-1){$s^{\ast}(\hb,\bb,\alpha)$}
\put(2,6.3){$h^\mathrm{que}_c(\hb,\bb,\th)$}
\put(12.2,-.3){$\th$}
\put(-1,.9){$\hat h^{\rm ann}_c(\frac{\hb}{\a})$}
\put(-.5,-1){$\th_c^{\rm que}(\bb)-\log 2$} 
\end{picture}
\vspace{.6cm}
\begin{center}
(a)
\end{center}
\end{minipage}
\begin{minipage}[hbt]{8.6cm}
\centering
\setlength{\unitlength}{0.50cm}
\begin{picture}(12,6)(0,0)
\put(0,0){\line(12,0){12}}
\put(4,0){\line(0,6){6}}
{\thicklines
\qbezier(2,5)(5,1.3)(12,1.2)
\qbezier(2,5)(1,5)(0,5)
}
\qbezier[30](2,5)(2,3)(2,0)
\qbezier[30](2,1)(5,1)(12,1)
\put(1,5.2){$\infty$}
\put(2,6.3){$h^\mathrm{que}_c(\hb,\bb,\th)$}
\put(12.2,-.3){$\th$}
\put(-1,.9){$\hat h^{\rm ann}_c(\frac{\hb}{\a})$}
\put(-.5,-1){$\th_c^{\rm que}(\bb)-\log 2$} 
\end{picture}
\vspace{.6cm}
\begin{center}
(b)
\end{center}
\end{minipage}
\vspace{-0cm}
\caption{\small Qualitative picture of the map $\th \mapsto h^\mathrm{que}_c(\hb,\bb,\th)$ 
for $\hb>0$ and $\bb\geq0$ when: (a) $s^{\ast}(\hb,\bb,\alpha)<\infty$; (b)  
$s^{\ast}(\hb,\bb,\alpha)=\infty$.} 
\label{fig-copadvarhc}
\end{figure}

\begin{figure}[htbp]
\vspace{1.5cm}
\begin{center}
\setlength{\unitlength}{0.50cm}
\begin{picture}(22,9)(0,0)
\put(-1,0){\line(22,0){22}}
\put(8,0){\line(0,9){9}}
{\thicklines
\qbezier(3,8)(10,2.6)(17,2)
\qbezier(17,2)(20,2)(21,2)
\qbezier(3,8)(1,8)(-1,8)
}
\qbezier[40](3,8)(3,3)(3,0)
\qbezier[20](16.8,0)(16.8,1)(16.8,2)
\qbezier[60](3,2)(12,2)(17,2)
\put(1.3,8.2){$\infty$}
\put(15.9,-1){$\th^{\rm ann}_c(\bb)$}
\put(5.6,9.5){$h^\mathrm{ann}_c(\hb,\bb,\th)$}
\put(0.2,1.9){$\hat h^\mathrm{ann}_c(\hb)$}
\put(21.5,-.3){$\th$}
\put(.81,-1){$\th^{\rm ann}_c(\bb)-\log 2$} 
\end{picture}
\end{center}
\vspace{0.2cm}
\caption{\small Qualitative picture of the map $\th \mapsto h^\mathrm{ann}_c
(\hb,\bb,\th)$ for $\hb,\bb\geq0$.} 
\label{fig-copadvarhannc}
\end{figure}

\begin{corollary}
\label{hcubstrict} 
For every $\alpha>1$, $\hb>0$ and $\bb\geq0$,  
\begin{equation}
h^\mathrm{que}_c(\hb,\bb,\th) 
\left\{\begin{array}{ll} 
< h^\mathrm{ann}_c(\hb,\bb,\th)\leq\infty, &\mbox{if } \th>\bar h^{\rm que}_c(\bb)-\log 2,\\
= h^\mathrm{ann}_c(\hb,\bb,\th)=\infty, &\mbox{otherwise.}
\end{array}
\right.
\end{equation}
\end{corollary}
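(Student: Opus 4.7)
The plan is to exploit the explicit regime decomposition of the critical curves from Corollary \ref{hquehannvarfor} and the basic monotonicity $h^{\rm que}_c\leq h^{\rm ann}_c$. Two key inputs are: (I) for $\a>1$, $\hat h^{\rm ann}_c(\hb/\a)<\hat h^{\rm ann}_c(\hb)$, which follows from strict convexity of $u\mapsto\hat M(u)/u$ on $(0,\infty)$ (a consequence of $\hat M$ being the log-moment generating function of a non-constant random variable); and (II) the pinning quenched--annealed bound $\bar h^{\rm que}_c(\bb)\leq\bar h^{\rm ann}_c(\bb)$. For $\th\leq\bar h^{\rm que}_c(\bb)-\log 2$, (II) gives $\th\leq\bar h^{\rm ann}_c(\bb)-\log 2$ as well, so both \eqref{ccqueflatp} and \eqref{ccannflatp} yield $h^{\rm que}_c=h^{\rm ann}_c=\infty$, settling the equality case.

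For $\th>\bar h^{\rm que}_c(\bb)-\log 2$ I prove $h^{\rm que}_c<h^{\rm ann}_c$. If $h^{\rm ann}_c=\infty$, this is trivial from \eqref{ccqueflatp}. Otherwise Corollary \ref{hquehannvarfor} guarantees $h^{\rm que}_c\geq\hat h^{\rm ann}_c(\hb/\a)$ and $h^{\rm ann}_c\geq\hat h^{\rm ann}_c(\hb)$. If $h^{\rm que}_c$ is on its copolymer plateau, then $h^{\rm que}_c=\hat h^{\rm ann}_c(\hb/\a)<\hat h^{\rm ann}_c(\hb)\leq h^{\rm ann}_c$ by (I). The remaining ``slanted'' subcase, where $h^{\rm que}_c=h^{\rm que}_\ast$ solves $S^{\rm que}(\hb,\hh,\bb;0)=\th$ and, depending on $\th$, $h^{\rm ann}_c$ is either $\hat h^{\rm ann}_c(\hb)$ or solves $S^{\rm ann}(\hb,\hh,\bb;0)=\th$, is the main obstacle.

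In the slanted subcase the task reduces to establishing the strict pointwise gap $S^{\rm que}(\hb,\hh,\bb;0)<S^{\rm ann}(\hb,\hh,\bb;0)$ on the common finite range of $\hh$: by the convex strict decrease of both maps (Corollary \ref{hquehannvarfor}) this forces the root of $S^{\rm que}(\hh;0)=\th$ strictly to the left of the root of $S^{\rm ann}(\hh;0)=\th$, and simultaneously rules out the edge case $h^{\rm que}_c=\hat h^{\rm ann}_c(\hb)=h^{\rm ann}_c$. The non-strict inequality $S^{\rm que}\leq S^{\rm ann}$ follows from the quenched--annealed bound combined with Corollary \ref{freeeneggap}(i); strictness is the delicate part, because Corollary \ref{freeeneggap}(iii) gives $g^{\rm que}<g^{\rm ann}$ only on the interior of $\cL^{\rm ann}$, whereas both free energies vanish on its boundary. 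The hardest step is therefore the strict $S$-gap itself, which I would establish by unpacking the explicit variational formulas of Theorems \ref{varfloc}--\ref{varflocann}: the quenched formula carries a strictly positive entropy/rate-function penalty on the empirical laws of the disorder over long excursions, and this penalty remains strictly positive precisely when $\a>1$. For $\a=1$ the penalty degenerates and strictness can fail, which is exactly the hypothesis excluded from the corollary.
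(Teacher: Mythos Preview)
Your overall structure matches the paper's and you correctly isolate the crux, but the proof has a genuine gap precisely where you say it does: you never actually establish the strict inequality $S^{\rm que}(\hb,\hh,\bb;0)<S^{\rm ann}(\hb,\hh,\bb;0)$ for $\hh\geq\hat h^{\rm ann}_c(\hb)$. Saying the quenched rate function ``carries a strictly positive entropy penalty'' when $\alpha>1$ is the right intuition, but it is not a proof: one must show that the unique annealed maximizer fails the constraint defining the quenched problem and that this forces a strictly positive cost. The paper (Section~\ref{Sec6.1}) does this concretely. Using the alternative formula $S^{\rm que}(\hb,\hh,\bb;0)=S_*^{\rm que}(\hb,\hh,\bb)$ of Theorem~\ref{varfloc}(ii), it bounds $S_*^{\rm que}$ from above by $S^{\rm ann}(\hb,\hh,\bb;0)$ minus
\[
\inf_{q}\Big[\,h\big(q\mid q_{\hb,\hh,\bb}\big)+(\alpha-1)\,m_{[q]_{\rm tr}}\,h\big(\widetilde\pi_1\Psi_{[q]_{\rm tr}}\,\big|\,\hm\otimes\bm\big)\Big],
\]
where $q_{\hb,\hh,\bb}$ is the unique annealed maximizer from \eqref{uniqmax}. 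It then computes $\widetilde\pi_1\Psi_{[q_{\hb,\hh,\bb}]_{\rm tr}}$ explicitly and checks it is not $\hm\otimes\bm$. Since the first summand vanishes only at $q=q_{\hb,\hh,\bb}$ and the second is strictly positive there, the infimum is strictly positive. This computation is the entire content of the corollary in the slanted regime, and your sketch does not supply it. Note also that Lemma~\ref{auxlemgap}, which you might hope to invoke, gives the strict $S$-gap only for $g>0$; at $g=0$ the annealed maximizer can have $I^{\rm ann}=\infty$ (when $m_\rho=\infty$), which is why the paper uses the $I^{\rm que}$-based argument instead.

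One minor correction: in (I) you appeal to strict convexity of $u\mapsto\hat M(u)/u$, but what you need is that this map is strictly \emph{increasing} on $(0,\infty)$; this follows from $\hat M(0)=0$ together with strict convexity of $\hat M$ itself. Your plateau shortcut via (I) is a clean observation the paper does not isolate, but it does not reduce the work in the slanted subcase.
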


\begin{corollary}
\label{hclbstrict}
For every $\alpha>1$, $\hb>0$ and $\bb\geq0$, 
\begin{equation}
h^\mathrm{que}_c(\hb,\bb,\th)
\left\{\begin{array}{ll}
> \hat h^\mathrm{ann}_c(\hb/\alpha), &\mbox{ if } \th< s^{\ast}(\hb,\bb,\alpha),\\
= \hat h^\mathrm{ann}_c(\hb/\alpha), &\mbox{ otherwise.}
\end{array}
\right.
\end{equation} 
\end{corollary}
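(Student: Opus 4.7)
The plan is to deduce this corollary as a direct consequence of the three-regime formula~(\ref{ccqueflatp}) for $\th\mapsto h^{\rm que}_c(\hb,\bb,\th)$ in Corollary~\ref{hquehannvarfor}. I would split $\R$ into the three intervals that appear there---namely $(-\infty,\,\th_c^{\rm que}(\bb)-\log 2]$, the intermediate interval $(\th_c^{\rm que}(\bb)-\log 2,\,s^{\ast}(\hb,\bb,\alpha)]$, and $(s^{\ast}(\hb,\bb,\alpha),\infty)$---and compare the resulting value of $h^{\rm que}_c$ against $\hat h^{\rm ann}_c(\hb/\alpha)$ on each.

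The two extreme cases are immediate. For $\th>s^{\ast}(\hb,\bb,\alpha)$, the $\th>s^{\ast}$ branch of~(\ref{ccqueflatp}) gives $h^{\rm que}_c(\hb,\bb,\th)=\hat h^{\rm ann}_c(\hb/\alpha)$, which is exactly the equality claim. For $\th\leq \th_c^{\rm que}(\bb)-\log 2$, the first branch of~(\ref{ccqueflatp}) gives $h^{\rm que}_c(\hb,\bb,\th)=\infty>\hat h^{\rm ann}_c(\hb/\alpha)$; the bound $s^{\ast}(\hb,\bb,\alpha)\geq \th_c^{\rm que}(\bb)-\log 2$ recorded in~(\ref{sast}) places such $\th$ in the regime $\th<s^{\ast}(\hb,\bb,\alpha)$, so this matches the strict-inequality branch of the corollary.

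The only substantive case is the intermediate regime $\th_c^{\rm que}(\bb)-\log 2<\th\leq s^{\ast}(\hb,\bb,\alpha)$, where $h^{\rm que}_c(\hb,\bb,\th)=h^{\rm que}_{\ast}(\hb,\bb,\th)$ is the unique $\hh$-solution of $S^{\rm que}(\hb,\hh,\bb;0)=\th$ from~(\ref{qccth}). The defining relation~(\ref{sast}) reads $s^{\ast}(\hb,\bb,\alpha)=S^{\rm que}(\hb,\hat h^{\rm ann}_c(\hb/\alpha),\bb;0)$, so uniqueness identifies $h^{\rm que}_{\ast}(\hb,\bb,s^{\ast}(\hb,\bb,\alpha))=\hat h^{\rm ann}_c(\hb/\alpha)$, which handles the boundary point $\th=s^{\ast}$. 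For $\th$ strictly below $s^{\ast}$ but still in this regime, the strict monotonicity of $\th\mapsto h^{\rm que}_{\ast}(\hb,\bb,\th)$ asserted at the end of Corollary~\ref{hquehannvarfor} then yields $h^{\rm que}_{\ast}(\hb,\bb,\th)>\hat h^{\rm ann}_c(\hb/\alpha)$, completing the argument.

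The hard part has already been done: the real content (existence of the three regimes, the identification~(\ref{sast}) of $s^{\ast}$ as a value of $S^{\rm que}$, and the strict monotonicity of $h^{\rm que}_{\ast}$) is packaged into Corollary~\ref{hquehannvarfor}, which in turn rests on the variational formula of Theorem~\ref{freeenegvar}. The hypothesis $\alpha>1$ enters only indirectly through that chain---it is what keeps $\hat h^{\rm ann}_c(\hb/\alpha)$ genuinely smaller than $\hat h^{\rm ann}_c(\hb)$ and underlies the appearance of $\hat h^{\rm ann}_c(\hb/\alpha)$ (rather than some larger plateau) in the $\th>s^{\ast}$ branch of~(\ref{ccqueflatp}). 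Consequently, given Corollary~\ref{hquehannvarfor}, the present statement is a direct reading of the case structure rather than a new computation.
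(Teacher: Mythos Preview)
Your proof is correct and follows essentially the same route as the paper's: both read the corollary off the three-regime formula~(\ref{ccqueflatp}) of Corollary~\ref{hquehannvarfor} together with the strict monotonicity established there. The only cosmetic difference is that the paper argues the intermediate case via the strict decrease of $\hh\mapsto S^{\rm que}(\hb,\hh,\bb;0)$ on $(\hat h^{\rm ann}_c(\hb/\alpha),\infty)$ rather than via the strict decrease of its inverse $\th\mapsto h^{\rm que}_\ast$; this phrasing has the minor advantage of covering the case $s^\ast(\hb,\bb,\alpha)=\infty$ directly, where your anchoring argument at the boundary point $\th=s^\ast$ is not available.
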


\begin{corollary}
\label{critqueann}
(i) For every $\alpha\geq 1$ and $\hb,\bb\geq 0$,
\begin{equation}
\begin{aligned}
\inf\left\{\th\in\R\colon\,g^\mathrm{ann}(\hb,\hat h_c^{\rm ann}(\hb),\bb,\th)=0 \right\}
&= \bar h^{\rm ann}_c(\bb),\\
\inf\left\{\hat h\geq 0\colon\,g^\mathrm{ann}(\hb,\hat h,\bb,\th_c^{\rm ann}(\bb))=0 \right\}
&= \hat h^{\rm ann}_c(\hb).
\end{aligned}
\end{equation}
(ii) For every $\alpha\geq 1$, $\hb>0$ and $\bb=0$, 
\begin{equation}
\inf\left\{\th\in\R\colon g^\mathrm{que}(\hb,\hat h_c^{\rm que}(\hb),\bb,\th)=0 \right\}
=0.
\end{equation}
\end{corollary}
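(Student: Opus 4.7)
The plan is to handle part~(i) by direct computation from the explicit annealed formula~(\ref{ganncomb}), and to handle part~(ii) via the quenched variational formula of Theorem~\ref{freeenegvar} combined with the convexity and monotonicity of $\hh\mapsto S^{\rm que}(\hb,\hh,0;0)$ supplied by Corollary~\ref{hquehannvarfor}.

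For part~(i), I read~(\ref{ganncomb}) as saying that, since the map
\[
F(g)=\log\!\left[\tfrac12\cN(g)+\tfrac12\cN\bigl(g-[\hM(2\hb)-2\hb\hh]\bigr)\right]+\bar M(-\bb)-\th
\]
is strictly decreasing in $g$ on its effective domain and the excess free energy is non-negative, one has $g^{\rm ann}(\hb,\hh,\bb,\th)=0$ if and only if $F(0)\leq 0$. For~(i.a) I substitute $\hh=\hat h_c^{\rm ann}(\hb)=\hM(2\hb)/(2\hb)$ from~(\ref{copannfeg}): the argument of the second $\cN$ collapses to $0$, so $F(0)=\bar M(-\bb)-\th$, and the infimum over $\th$ with $F(0)\leq 0$ is $\bar M(-\bb)=\bar h_c^{\rm ann}(\bb)$. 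For~(i.b) I substitute $\th=\th_c^{\rm ann}(\bb)=\bar M(-\bb)$ (consistent with~(\ref{ccannflatp}) evaluated at $\hh=\hat h_c^{\rm ann}(\hb)$): the condition $F(0)\leq 0$ then reduces to $\cN(2\hb\hh-\hM(2\hb))\leq 1=\cN(0)$, which---using that $\cN$ is non-increasing and finite on $[0,\infty)$ but $+\infty$ on $(-\infty,0)$ (because $\rho$ has polynomial tail)---is equivalent to $2\hb\hh-\hM(2\hb)\geq 0$, i.e.\ $\hh\geq\hat h_c^{\rm ann}(\hb)$.

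For part~(ii), I set $\hh^*:=\hat h_c^{\rm que}(\hb)$ and specialize~(\ref{ccquccannvarfor}) to $\bb=0$, $\th=0$; together with $\hat h_c^{\rm que}(\hb)=h_c^{\rm que}(\hb,0,0)$ this yields
\[
\hh^*=\inf\{\hh>0\colon\,S^{\rm que}(\hb,\hh,0;0)\leq 0\}.
\]
By Corollary~\ref{hquehannvarfor} the map $\hh\mapsto S^{\rm que}(\hb,\hh,0;0)$ is convex and non-increasing on $(0,\infty)$, hence continuous, so the infimum is attained and $S^{\rm que}(\hb,\hh^*,0;0)=0$. I then apply~(\ref{gquegannvarfor}): non-negativity of $g^{\rm que}$ combined with the non-increasing character of $g\mapsto S^{\rm que}(\hb,\hh^*,0;g)$ gives $g^{\rm que}(\hb,\hh^*,0,\th)=0$ for every $\th\geq 0$. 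For $\th<0$, I have $S^{\rm que}(\hb,\hh^*,0;0)=0>\th$, and lower semi-continuity of $g\mapsto S^{\rm que}(\hb,\hh^*,0;g)$ at $g=0$ from Theorem~\ref{freeenegvar}(i) forces $S^{\rm que}\geq\th$ on a right neighborhood of $0$; hence the infimum defining $g^{\rm que}$ in~(\ref{gquegannvarfor}) is strictly positive and $g^{\rm que}>0$. Combining the two cases yields $\inf\{\th\in\R\colon\,g^{\rm que}(\hb,\hh^*,0,\th)=0\}=0$.

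I expect the main obstacle to be establishing $S^{\rm que}(\hb,\hh^*,0;0)=0$ exactly, and not merely $\leq 0$: this equality is what lets me separate $\th<0$ from $\th\geq 0$ cleanly, and it hinges on the convexity---hence continuity on the interior of the effective domain---of $\hh\mapsto S^{\rm que}(\hb,\hh,0;0)$ from Corollary~\ref{hquehannvarfor}. The rest of the argument reduces to direct manipulation of the explicit inequalities supplied by the variational formulas together with the domain and monotonicity properties of $\cN$, $\hM$, and $\bar M$.
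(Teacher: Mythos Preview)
Your part~(i) is correct and is essentially the paper's own argument: both compute directly from the closed form~(\ref{scopadf})/(\ref{ganncomb}) and read off the thresholds.

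For part~(ii) you take a genuinely different route. The paper does not argue via convexity of $\hh\mapsto S^{\rm que}(\hb,\hh,0;0)$ at all; instead it uses the contraction principle to identify the combined-model functional at $\bb=0$ with the pure copolymer functional,
\[
S_*^{\rm que}(\hb,\hh,0)=\hat S^{\rm que}(\hb,\hh;0),
\]
and then simply cites \cite{BodHoOp11}, Theorem~1.1(ii), for the fact that $\hat S^{\rm que}(\hb,\hat h_c^{\rm que}(\hb);0)=0$. Your approach has the advantage of staying inside the present paper, but your pivotal step ``convex and non-increasing, hence continuous, hence $S^{\rm que}(\hb,\hh^*,0;0)=0$'' is not quite complete as written: convexity only yields continuity on the \emph{interior} of the effective domain $\{\hh:S^{\rm que}<\infty\}$. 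For $\alpha>1$ this is harmless, since $\hh^*=\hat h_c^{\rm que}(\hb)>\hat h_c^{\rm ann}(\hb/\alpha)$ by \cite{BodHoOp11}, so $\hh^*$ is interior. For $\alpha=1$, however, the effective domain is $[\hat h_c^{\rm ann}(\hb),\infty)$ and $\hh^*=\hat h_c^{\rm ann}(\hb)$ sits on its boundary, so left-continuity is not available and your argument does not immediately give $S^{\rm que}(\hh^*)\geq 0$. Two clean repairs are available: either observe that Corollary~\ref{hquehannvarfor} already asserts, via~(\ref{ccqueflatp}) and~(\ref{qccth}), that $h_c^{\rm que}(\hb,0,0)$ solves $S^{\rm que}(\hb,\hh,0;0)=0$ (one only has to check that $\th=0$ lies in the ``otherwise'' range $(\th_c^{\rm que}(0)-\log 2,\,s^\ast(\hb,0,\alpha)]=(-\log 2,\,s^\ast]$, which is immediate from~(\ref{sast})); or handle $\alpha=1$ separately by noting that then $I^{\rm que}=I^{\rm ann}$ on $\cC^{\rm fin}$, so $S^{\rm que}(\hb,\cdot,0;0)=S^{\rm ann}(\hb,\cdot,0;0)$ and Lemma~\ref{asyptann} gives the value $0$ at $\hh^*=\hat h_c^{\rm ann}(\hb)$. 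Once $S^{\rm que}(\hb,\hh^*,0;0)=0$ is in hand, your concluding paragraph splitting $\th\geq 0$ and $\th<0$ via~(\ref{gquegannvarfor}) and right-continuity of $g\mapsto S^{\rm que}$ is correct.
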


The last two corollaries concern the typical path behavior. Let $P^{\hb,\hh,\bb,\th,\o}_n$ 
denote the path measure associated with the Hamiltonian $H_n^{\hb,\hh,\bb,\th,\o}$ defined 
in \eqref{copadmodhamil}. Write $\cM_n = \cM_n(\pi) = |\{1\leq i\leq n\colon\,\pi_i=0\}|$ 
to denote the number of times the polymer returns to the interface up to time $n$. Define 
\begin{equation}
\label{dquedecomp}
\begin{split}
\cD^{\rm que}_1 = \left\{(\hb,\hh,\bb,\th)\in\cD^{\rm que}\colon\, 
\th\leq s^\ast(\hb,\bb,\alpha)\right\}.
\end{split}
\end{equation}

\begin{corollary}
\label{delocpathprop}
For every $(\hb,\hh,\bb,\th)\in \mathrm{int}(\cD^\mathrm{que}_1) \cup (\cD^{\rm que}
\setminus\cD^{\rm que}_1)$ and $c>\alpha/[-(S^\mathrm{que}(\hb,\hh,\bb;0)-\th)] \in 
(0,\infty)$,
\begin{equation}
\label{ubdint}
\lim_{n\to\infty} P^{\hb,\hh,\bb,\th,\o}_n\left(\cM_n \geq c\log n\right) 
= 0 \qquad \o-a.s.
\end{equation}
\end{corollary}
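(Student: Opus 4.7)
Set $\delta := \th - S^\mathrm{que}(\hb,\hh,\bb;0)$. Because $g \mapsto S^\mathrm{que}(\hb,\hh,\bb;g)$ is convex and non-increasing (Theorem~\ref{freeenegvar}), we have $\delta > 0$ throughout $\mathrm{int}(\cD^\mathrm{que}_1) \cup (\cD^\mathrm{que}\setminus\cD^\mathrm{que}_1)$, since both alternatives exclude the critical curve $h_c^\mathrm{que}$, on which $\delta = 0$. The plan is to sandwich $P_n^{\hb,\hh,\bb,\th,\o}(\cM_n \geq c\log n)$ between a polynomial lower bound on the partition function $Z_n := \sum_\pi e^{H_n^{\hb,\hh,\bb,\th,\o}(\pi)} P_n^*(\pi)$ and an exponential-in-$m$ upper bound on $Z_n(\cM_n = m)$ summed over $m \geq c\log n$. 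The lower bound is immediate: restricting to the path consisting of a single excursion of length $n$ above the interface gives $\Delta_i = 1$ throughout, so the copolymer part of \eqref{copadmodhamil} vanishes and only $\delta_n = 1$ contributes to the pinning part; hence $Z_n \geq \tfrac12\,\rho(n)\,e^{\bb \bar\omega_n - \th}$. Combining \eqref{rhocond} with the Chernoff-plus-Borel--Cantelli consequence $\bar\omega_n = o(\log n)$ of \eqref{mgffin} then yields
\begin{equation*}
\liminf_{n\to\infty} \frac{\log Z_n}{\log n} \;\geq\; -\alpha, \qquad \o\text{-a.s.}
\end{equation*}
For $n \notin \mathrm{supp}(\rho)$ the non-sparsity condition stated after \eqref{rhocond} restores the bound via concatenation with a short excursion.

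The heart of the argument is the per-return upper bound
\begin{equation*}
\frac{1}{m} \log Z_n(\cM_n = m) \;\leq\; S^\mathrm{que}(\hb,\hh,\bb;0) - \th + o(1) \;=\; -\delta + o(1), \qquad \o\text{-a.s.},
\end{equation*}
uniform in $n$ as $m \to \infty$. I would extract this from the upper-bound half of the LDP underpinning the variational formula for $S^\mathrm{que}$ in Section~\ref{S3} (Theorems~\ref{varfloc}--\ref{varflocann}): decomposing a path with $\cM_n = m$ into its $m$ excursion-plus-pinning blocks recasts $Z_n(\cM_n = m)$ as a partition sum indexed by the empirical distribution of such blocks, and the LDP upper bound evaluated at $g = 0$ gives the exponential-in-$m$ rate $S^\mathrm{que}(\hb,\hh,\bb;0)$. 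Evaluating at $g = 0$ is legitimate because the conditioning $\cM_n = m$ with $m \leq n$ imposes no effective lower bound on the return density beyond $m/n \leq 1$.

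Combining the two ingredients,
\begin{equation*}
Z_n(\cM_n \geq c\log n) \;\leq\; \sum_{m \geq c\log n} e^{-m\delta + o(m)} \;\leq\; C\,n^{-c\delta + o(1)},
\end{equation*}
so that
\begin{equation*}
P_n^{\hb,\hh,\bb,\th,\o}(\cM_n \geq c\log n) \;\leq\; \frac{C\,n^{-c\delta + o(1)}}{n^{-\alpha + o(1)}} \;=\; n^{\alpha - c\delta + o(1)} \;\longrightarrow\; 0, \qquad \o\text{-a.s.},
\end{equation*}
since $c\delta > \alpha$ by the hypothesis $c > \alpha/\delta$.

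The main obstacle is the uniform per-return bound. A naive exponential tilt against $\lambda\cM_n$ with $\lambda \in (0,\delta)$ would only yield $P_n^{\hb,\hh,\bb,\th,\o}(\cM_n \geq c\log n) \leq n^{-\lambda c}\,e^{o(n)}$, because the large-deviation identification $g^\mathrm{que}(\hb,\hh,\bb,\th-\lambda) = 0$ in the delocalized phase controls the $\lambda$-tilted partition function only up to subexponential rather than polynomial corrections, which is insufficient. The polynomial-in-$n$ control needed here must come from the finer LDP underpinning $S^\mathrm{que}$, applied in the moderate-deviation regime $m = O(\log n)$; this is the step that makes essential use of the constructive content of Section~\ref{S3}, not merely the variational characterization recorded in Theorem~\ref{freeenegvar}.
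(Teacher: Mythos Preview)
Your approach is correct and matches the paper's (which defers to \cite{BodHoOp11}, Section~8): the argument hinges precisely on $\delta = \th - S^{\rm que}(\hb,\hh,\bb;0) > 0$, a polynomial lower bound on $Z_n$ from a single long excursion, and the per-return upper bound $Z_n(\cM_n=m) \le e^{-m(\delta-o(1))}$.

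Two small remarks. First, your justification that $\delta>0$ because ``both alternatives exclude the critical curve'' is not quite right for $\cD^{\rm que}\setminus\cD^{\rm que}_1$: that set \emph{does} contain the piece of the critical curve where $\th>s^\ast(\hb,\bb,\alpha)$ and $\hh=\hat h_c^{\rm ann}(\hb/\alpha)$, but there $S^{\rm que}(\hb,\hh,\bb;0)=s^\ast(\hb,\bb,\alpha)<\th$ by \eqref{sast}, so $\delta>0$ anyway. Second, the per-return bound is more elementary than you suggest: from \eqref{FNdef} at $g=0$ one has $Z_n(\cM_n=m)\le F_m^{\hb,\hh,\bb,\th,\o}(0)$ for every $n$ (the left-hand side is the single term $k_m=n$ in the sum defining the right-hand side), and \eqref{copadSlim} together with \eqref{seqsbar} gives $\limsup_{m\to\infty}\tfrac1m\log F_m(0)=S^{\rm que}(\hb,\hh,\bb;0)-\th=-\delta$ directly, with no need to invoke the LDP upper bound as such.
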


\begin{corollary}
\label{locpathprop}
For every $(\hb,\hh,\bb,\th) \in \cL^\mathrm{que}$, 
\begin{equation}
\label{LLNpath}
\lim_{n\to\infty} 
P^{\hb,\hh,\bb,\th,\o}_n\left(|\tfrac1n\cM_n-C| \leq \varepsilon \right) = 1
\qquad \o-a.s.\quad \forall\,\varepsilon>0,
\end{equation}
where 
\begin{equation}
\label{densid}
-\frac{1}{C} = \frac{\partial}{\partial g}\,
S^\mathrm{que}\big(\hb,\hh,\bb;g^\mathrm{que}(\hb,\hh,\bb,\th)\big)
\in (-\infty,0),
\end{equation}
provided this derivative exists. (By convexity, at least the left-derivative and the 
right-derivative exist.)
\end{corollary}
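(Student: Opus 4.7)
The plan is to establish this as a concentration result via an exponential-tilting argument: I would write the Laplace transform of $\cM_n$ under the polymer measure as a shifted partition-function ratio, use the a.s.\ convergence of the quenched free energy to extract a deterministic limiting log-moment-generating function, and then read off the mean as its derivative at $0$.

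Concretely, $\cM_n=\sum_{i=1}^{n}\delta_i$ is conjugate to $-\th$ in the Hamiltonian \eqref{copadmodhamil}, which gives the exact identity
\begin{equation}
E_n^{\hb,\hh,\bb,\th,\o}\!\left[e^{\lambda\cM_n}\right]
= \frac{Z_n^{\hb,\hh,\bb,\th-\lambda,\o}}{Z_n^{\hb,\hh,\bb,\th,\o}},\qquad \lambda\in\R,
\end{equation}
where $Z_n^{\hb,\hh,\bb,\th,\o}$ is the partition sum associated with $H_n^{\hb,\hh,\bb,\th,\o}$ and $E_n^{\hb,\hh,\bb,\th,\o}$ denotes expectation under $P_n^{\hb,\hh,\bb,\th,\o}$. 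Taking $\tfrac{1}{n}\log$ and invoking the $\o$-a.s.\ convergence \eqref{copadfe}--\eqref{copadefe} yields, for each fixed $\lambda$,
\begin{equation}
\psi_n(\lambda):=\tfrac{1}{n}\log E_n^{\hb,\hh,\bb,\th,\o}\!\left[e^{\lambda\cM_n}\right]
\xrightarrow[n\to\infty]{\o\text{-a.s.}}
\psi(\lambda):=g^\mathrm{que}(\hb,\hh,\bb,\th-\lambda)-g^\mathrm{que}(\hb,\hh,\bb,\th).
\end{equation}
Each $\psi_n$ is convex (log-Laplace), so $\psi$ is convex with $\psi(0)=0$; a countable-dense-set-plus-convexity argument will then upgrade this to simultaneous $\o$-a.s.\ convergence on a neighborhood of $0$.

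Next I would identify $\psi'(0)$ with the $C$ of \eqref{densid}. Inside $\cL^\mathrm{que}$ one has $g^\mathrm{que}(\hb,\hh,\bb,\th)>0$, and since $g\mapsto S^\mathrm{que}(\hb,\hh,\bb;g)$ is convex and non-increasing, the variational formula of Theorem~\ref{freeenegvar}(i) forces the equality $S^\mathrm{que}(\hb,\hh,\bb;g^\mathrm{que}(\hb,\hh,\bb,\th))=\th$. Implicit differentiation in $\th$, justified precisely at a point where $\partial_g S^\mathrm{que}$ exists, gives $\partial_\th g^\mathrm{que}=1/\partial_g S^\mathrm{que}$, hence
\begin{equation}
\psi'(0)=-\partial_\th g^\mathrm{que}(\hb,\hh,\bb,\th)
=-\frac{1}{\partial_g S^\mathrm{que}(\hb,\hh,\bb;g^\mathrm{que}(\hb,\hh,\bb,\th))}=C\in(0,\infty),
\end{equation}
matching \eqref{densid}. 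I would close the LLN by the Chernoff bound: for any $\varepsilon>0$ and $\lambda>0$,
\begin{equation}
P_n^{\hb,\hh,\bb,\th,\o}\!\left(\cM_n/n\geq C+\varepsilon\right)
\leq \exp\bigl(-n[\lambda(C+\varepsilon)-\psi_n(\lambda)]\bigr),
\end{equation}
and the Taylor expansion $\psi(\lambda)=C\lambda+o(\lambda)$ at $0$ lets me pick $\lambda>0$ small enough that the bracket is bounded below by a strictly positive constant in the limit; the lower tail is treated identically with $\lambda<0$.

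The main obstacle I anticipate is step three: checking that differentiability of $g^\mathrm{que}(\hb,\hh,\bb,\cdot)$ at the given $\th$ is \emph{equivalent} to differentiability of $S^\mathrm{que}(\hb,\hh,\bb;\cdot)$ at $g=g^\mathrm{que}(\hb,\hh,\bb,\th)$, and that the derivative there is finite and nonzero. Strict monotonicity of $S^\mathrm{que}(\hb,\hh,\bb;\cdot)$ on the relevant interval -- a property guaranteed by the variational characterization in Section~\ref{S3} -- is what makes the identification go through; when the derivative of $S^\mathrm{que}$ fails to exist, one only obtains concentration of $\cM_n/n$ on the corresponding subgradient interval instead of a single point, which is exactly the proviso in the statement.
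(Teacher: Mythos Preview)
Your approach is correct and is essentially the standard exponential-tilting argument that the paper defers to (the paper's own proof simply refers to \cite{BodHoOp11}, Section~8, whose Corollary~1.7 is proved by exactly this scheme). The key points --- that $\cM_n$ is conjugate to $-\th$, that in $\cL^{\rm que}$ the identity $S^{\rm que}(\hb,\hh,\bb;g^{\rm que})=\th$ from Corollary~\ref{freeeneggap}(i) makes $\th\mapsto g^{\rm que}$ the inverse of the strictly decreasing convex map $g\mapsto S^{\rm que}$, and that differentiability transfers between inverse convex functions --- are all in order.
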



\subsection{Discussion}
\label{S1.6}

{\bf 1.}
The copolymer and pinning versions of Theorem \ref{freeenegvar} are obtained by putting 
$\bb=\th=0$ and $\hb=\hh=0$, respectively. The copolymer version of Theorem \ref{freeenegvar} 
was proved in Bolthausen, den Hollander and Opoku \cite{BodHoOp11}.

\medskip\noindent 
{\bf 2.} 
Corollary~\ref{freeeneggap}(i) identifies the range of parameters for which the free energies
given by \eqref{gquegannvarfor} are the $g$-values where the variational formulas equal $\th$.   
Corollary~\ref{freeeneggap}(ii) shows that the annealed combined model is localized when 
the annealed copolymer model is localized. On the other hand, if the annealed copolymer 
model is delocalized, then a sufficiently attractive pinning interaction is needed for 
the annealed combined model to become localized, namely, $\th<\th_\ast(\hb,\hh,\bb)$. It 
is an open problem to identify a similar threshold for the quenched combined model.

\medskip\noindent
{\bf 3.}
In Bolthausen, den Hollander and Opoku~\cite{BodHoOp11} it was shown with the help of 
the variational approach that for the copolymer model there is a gap between the quenched 
and the annealed excess free energy in the localized phase of the annealed copolymer model. 
It was argued that this gap can also be deduced with the help of a result in Giacomin 
and Toninelli~\cite{GiTo06a,GiTo06b}, namely, the fact that the map $\hh\mapsto\hat 
g^\mathrm{que}(\hb,\hh)$ drops below a quadratic as $\hh \uparrow \hat h_c^\mathrm{que}(\hb)$ 
(i.e., the phase transition is ``at least of second order''). Indeed, $g^\mathrm{que}
\leq g^\mathrm{ann}$, $\hh\mapsto \hat g^\mathrm{que}(\hb,\hh)$ is convex and strictly decreasing 
on $(0,\hat h_c^\mathrm{que}(\hb)]$, and $\hh\mapsto\hat g^\mathrm{ann}(\hb,\hh)$ 
is linear and strictly decreasing on $(0,\hat h_c^\mathrm{ann}(\hb)]$. The quadratic bound
implies that the gap is present for $\hh$ slightly below $\hat h_c^\mathrm{ann}(\hb)$, and 
therefore it must be present for all $\hh$ below $\hat h_c^\mathrm{ann}(\hb)$. Now, the 
same arguments as in \cite{GiTo06a,GiTo06b} show that also $\hh\mapsto  g^\mathrm{que}
(\hb,\hh,\bb,\th)$ drops below a quadratic as $\hh \uparrow h_c^\mathrm{que}(\hb,\bb,\th)$. However, $\hh\mapsto g^\mathrm{ann}(\hb,\hh,\bb,\th)$ is {\em not} linear on 
$(0,h_c^\mathrm{ann}(\hb,\bb,\th)]$ (see \eqref{ganncomb}), and so there is no similar 
proof of Corollary~\ref{freeeneggap}(iii). Our proof underscores the \emph{robustness} 
of the variational approach. We expect the gap to be present also when $m_\rho=\infty$ 
and $\th=\th_*(\hb,\hh,\bb)$, but this remains open.

\medskip\noindent
{\bf 4.} Corollary~\ref{freeeneggap}(iv) gives a natural interpretation for $\bar h_\ast
(\hb,\hh,\bb)$, namely, this is the critical value below which the pinning interaction 
has an effect in the annealed model and above which it has not.

\medskip\noindent
{\bf 5.}
The precise shape of the quenched critical curve for the combined model was not well 
understood (see e.g.\ Giacomin~\cite{Gi07}, Section 6.3.2, and Caravenna, Giacomin and 
Toninelli~\cite{CaGiTo10}, last paragraph of Section 1.5). In particular, in \cite{Gi07}
two possible shapes were suggested for $\bb=0$, as shown in Fig.~\ref{fig-giaco}. 
Corollary~\ref{hquehannvarfor} rules out line 2, while it proves line 1 in the following 
sense: (1) this line holds for all $\bb\geq0$; (2) for $\th<\th^{\rm que}_c(\bb)-\log 2$, 
the combined model is \emph{fully localized}; (3) conditionally on $s^\ast(\hb,\bb,
\alpha)<\infty$, for $\th \geq s^\ast(\hb,\bb,\alpha)$ the quenched critical curve concides 
with $\hat h_c^{\rm ann}(\hb/\alpha)$ (see Fig.~\ref{fig-copadvarhc}). In the literature 
$\hh\mapsto \hat h_c^{\rm ann}(\hb/\alpha)$ is called the \emph{Monthus-line}. Thus, when we sit at 
the far ends of the $\th$-axis, the critical behavior of the quenched combined model is
determined either by the copolymer interaction (on the far right) or by the pinning 
interaction (on the far left). Only in between is there a non-trivial competition between 
the two interactions.   

\begin{figure}[htbp]
\vspace{.5cm}
\begin{center}
\setlength{\unitlength}{.68cm}
\begin{picture}(14,7)(0,0)
\put(0,0){\line(14,0){14}}
\put(7.2,0){\line(0,7){7}}
{\thicklines
\qbezier(-.61,6.8)(1.8,2.6)(14,1.7)
\qbezier(-.61,5.2)(3,2.9)(5,2.6)
\qbezier(5,2.6)(12,2.6)(14,2.6)
}
\qbezier[25](7.2,1.5)(12,1.5)(14,1.5)
\put(6,7.5){$h^\mathrm{que}_c(\hb,0,\th)$}
\put(7.3,2.8){$\hat h^\mathrm{que}_c(\hb)$}
\put(4.5,1.3){$\hat h^\mathrm{ann}_c(\hb/\alpha)$}
\put(14.5,-.3){$\th$}
\put(-1,6.6){$1$}
\put(-1,5){$2$}
\end{picture}
\end{center}
\vspace{-0.4cm}
\caption{\small Possible qualitative pictures of the map $\th \mapsto h^\mathrm{que}_c
(\hb,0,\th)$ for $\hb>0$.} 
\label{fig-giaco}
\end{figure}

\medskip\noindent
{\bf 6.}
The threshold values $\th=\th^{\rm que}(\bb)-\log 2$ and $\th=\th^{\rm ann}(\bb)-\log 2$ 
(see Figs.~\ref{fig-copadvarhc}--\ref{fig-copadvarhannc}) are the critical points for 
the quenched and the annealed pinning model when the polymer is allowed to stay in the 
upper halfplane only. In the literature this restricted pinning model is called the 
\emph{wetting model} (see Giacomin~\cite{Gi07}, den Hollander \cite{dHo09}). 
These values of $\th$ are the transition points at which the quenched and the annealed 
critical curves of the combined model change from being finite to being infinite. Thus, 
we recover the critical curves for the wetting model from those of the combined model 
by putting $\hh=\infty$.

\medskip\noindent
{\bf 7.}
It is known from the literature that the pinning model undergoes a transition between
\emph{disorder relevance} and \emph{disorder irrelevance}. In the former regime, there 
is a gap between the quenched and the annealed critical curve, in the latter there is 
not. (For  some authors disorder relevance also incorperates a difference in behaviour 
of the annealed and the quenched free energies as the critical curves are approached.) 
The transition depends on $\alpha$, $\bb$ and $\bm$ (the pinning disorder law). 
In particular, if $\alpha>
\tfrac32$, then the disorder is relevant for all $\bb>0$, while if $\alpha\in (1,\tfrac32)$, 
then there is a critical threshold $\bb_c\in (0,\infty]$ such that the disorder is 
irrelevant for $\bb\leq \bb_c$ and relevant for $\bb>\bb_c$.  The transition is absent 
in the copolymer model (at least when the copolymer disorder law $\hat\mu$ has finite exponential 
moments): the disorder is relevant for all $\alpha>1$. However, 
Corollary~\ref{hcubstrict} shows that in the combined model the transition occurs for 
all $\alpha>1$, $\hb>0$ and $\bb\geq 0$. Indeed, the disorder is relevant for $\th>
\th^{\rm que}(\bb)-\log 2$ and is irrelevant for $\th\leq \th^{\rm que}(\bb)-\log 2$.  

\medskip\noindent
{\bf 8.}
The quenched critical curve is bounded from below by the Monthus-line (as the critical 
curve moves closer to the Monthus-line, the copolymer interaction more and more dominates 
the pinning interaction). Corollary~\ref{hclbstrict} and Fig.~\ref{fig-copadvarhc} show
that the critical curve stays above the Monthus-line as long as $\th<s^\ast(\hb,\bb,
\alpha)$. If $s^\ast(\hb,\bb,\alpha)=\infty$, then the quenched critical curve is 
everywhere above the Monthus-line (see Fig.~\ref{fig-copadvarhc}(b)). A \emph{sufficient} 
condition for $s^\ast(\hb,\bb,\alpha)<\infty$ is 
\begin{equation}
\label{neccond}
\sum_{n\in\N}\rho(n)^{\frac1\alpha}<\infty.
\end{equation}
We do not know whether $s^\ast(\hb,\bb,\alpha)<\infty$ always. For $\bb=0$, 
Toninelli~\cite{To07} proved that, under condition \eqref{neccond}, the quenched 
critical curve coincides with the Monthus-line for $\th$ large enough. 
 
\medskip\noindent
{\bf 9.} 
  As an anonymous referee pointed out, line 2 of Fig. \ref{fig-giaco}  can be  disproved 
  by combining   results for the copolymer model proved in Bolthausen, den Hollander and  Opoku
  \cite{BodHoOp11} and Giacomin \cite{Gi07}, Section 6.3.2, with  a fractional moment estimate.
   Let us present the argument  for the case $\bar\beta=0$. The key is the observation that 
  \begin{enumerate}
  \item[(1)] $h_c^{\rm que}(\hat \beta,0,0)=\hat h_c^{\rm que}(\hat\beta)>\hat h_c^{\rm ann}
  (\hat\beta/\alpha)$ (proved in \cite{BodHoOp11}). 
  \item[(2)] $\lim_{\bar h\rightarrow\infty} h_c^{\rm que}(\hat \beta,0,\bar h)
  =\hat h_c^{\rm ann}(\hat\beta/\alpha).$
  \end{enumerate}
  The proof for (2) goes as follows:
Note from Giacomin \cite{Gi07}, Section 6.3.2, that $h_c^{\rm que}(\hat \beta,0,\bar h)
  \geq\hat h_c^{\rm ann}(\hat\beta/\alpha)$. The reverse of this inequality as $\bar h\rightarrow\infty$ 
  follows from the  fractional moment estimate
 \begin{equation}\label{simproof}
 \begin{aligned}
 \E\left[\left(Z_n^{\hb,\hh,0,\bar h,\o}\right)^\gamma\right]
& \leq \sum_{N=1}^n\sum_{0=k_0<k_1<\ldots<k_N=n}\prod_{i=1}^N\rho(k_i-k_{i-1})^\gamma e^{-\bar h\gamma} 
 2^{1-\gamma},
 \end{aligned}
 \end{equation}
 valid for $\hh=\hh^{\rm ann}_c(\gamma\hb)$ and $\gamma\in(0,1)$, for the combined 
 partition sum where the path starts and ends at the interface. 
 Indeed, for any $\gamma>\frac1\alpha$, if $\bar h>0$ is large enough so that 
 $\sum_{n\in\N}\rho(n)^\gamma e^{-h\gamma} 
 2^{1-\gamma}\leq 1$, then  the right-hand side is  the partition function for a homogeneous 
 pinning model with a defective excursion length distribution and therefore has zero free 
 energy (see e.g. \cite{Gi07}, Section 2.2). Hence $h_c^{\rm que}(\hat \beta,0,\bar h)
  \leq\hat h_c^{\rm ann}(\gamma\hat\beta)$. Let $\bar h\rightarrow \infty$ followed by 
  $\gamma\downarrow\frac1\alpha$ to get (2). But (1) and (2) together with convexity of 
  $\bar h\mapsto h_c^{\rm que}(\hat \beta,0,\bar h)$ disprove line 2 of  Fig. \ref{fig-giaco}.
 
 Although the above fractional moment estimate extends to the case $\bar\beta>0$, it is not 
 clear to us how the rare stretch strategy used in \cite{Gi07}, Section 6.3.2, can be 
 used to arrive at the lower bound $h^{\rm que}_c(\hb,\bar\beta,\bar h)\geq \hat 
 h^{\rm ann}_c(\hb/\alpha)$, for the case $\bar\beta>0$, since the polymer may hit pinning 
 disorder with  very large absolute value upon visiting or exiting  a rare stretch in the copolymer
 disorder   making the pinning contribution to the energy  of order greater than $O(1)$. 
 Moreover, it is not clear to us how to arrive at the lower bound  $\,h^{\rm que}_c(\hb,
 \bar\beta,\bar h)\geq \hat h^{\rm que}_c(\hat\beta),$ for the case $\bar\beta>0$, 
 based on the argument in \cite{BodHoOp11} that gave rise to the inequality in (1).

\medskip\noindent
{\bf 10.}
Corollary~\ref{critqueann}(i) shows that the critical curve for the annealed
combined model taken at the $\th$-value where the annealed copolymer model is 
critical coincides with the annealed critical curve of the pinning model, and
vice versa. For the quenched combined model a similar result is expected, but
this remains open. One of the questions that was posed in Giacomin \cite{Gi07}, 
Section 6.3.2, for the quenched combined model is whether an arbitrary small 
pinning bias $-\th>0$ can lead to localization for $\bb=0$, $\hb>0$ and $\hh
=\hat h^{\rm que}_c(\hb)$. This question is answered in the affirmative by 
Corollary~\ref{critqueann}(ii). 

\medskip\noindent
{\bf 11.}
Giacomin and Toninelli~\cite{GiTo06} showed that in $\cL^{\rm que}$ the longest 
excursion under the quenched path measure $P_n^{\hb,\hh,\bb,\th,\o}$ is of order 
$\log n$. No information was obtained about the path behavior in $\cD^{\rm que}$. 
Corollary~\ref{delocpathprop} says that in $\cD^{\rm que}$ (which is the region 
on or above the critial curve in Fig.~\ref{fig-copadvarhc}), with the exception 
of the piece of the critical curve over the interval $(-\infty,s_*(\hb,\bb,\alpha))$, 
the total number of visits to the interface up to time $n$ is at most of order $\log n$. 
On this piece, the number may very well be of larger order. Corollary~\ref{locpathprop} 
says that in $\cL^{\rm que}$ this number is proportional to $n$, with a variational 
formula for the proportionality constant. Since on the piece of the critical curve 
over the interval $[s_*(\hb,\bb,\alpha),\infty)$ the number is of order $\log n$, 
the phase transition is expected to be first order on this piece. 

\medskip\noindent
{\bf 12.}
Smoothness of the free energy in the localized phase, finite-size corrections, and 
a central limit theorem for the free energy can be found in \cite{GiTo06}. 
P\'etr\'elis~\cite{Pert09} studies the weak interaction limit of the combined model.


\subsection{Outline}
\label{S1.7}

The present paper uses ideas from Cheliotis and den Hollander~\cite{ChdHo10} and Bolthausen, 
den Hollander and Opoku~\cite{BodHoOp11}. The proof of Theorem \ref{freeenegvar} uses 
large deviation principles derived in Birkner~\cite{Bi08} and Birkner, Greven and den 
Hollander~\cite{BiGrdHo10}. The quenched variational formula and its proof are given in 
Section \ref{S3.1}, the annealed variational formula and its proof in Section~\ref{S3.2}. 
Section~\ref{S3.3} contains the proof of Theorem~\ref{freeenegvar}. The proofs of Corollaries 
\ref{freeeneggap}--\ref{locpathprop} are given in Sections~\ref{Sec4}--\ref{Sec6}. 
The latter require certain technical results, which are proved in 
Appendices~\ref{appA}--\ref{appc}.


\section{Large Deviation Principle (LDP)}
\label{S2}

Let $E$ be a Polish space, playing the role of an alphabet, i.e., a set of \emph{letters}. 
Let $\widetilde{E} = \cup_{k\in\N} E^k$ be the set of \emph{finite words} drawn from
$E$, which can be metrized to become a Polish space.

Fix $\nu \in \cP(E)$, and $\rho\in\cP(\N)$ satisfying (\ref{rhocond}). Let $X=(X_k)_{k\in\N}$
be i.i.d.\ $E$-valued random variables with marginal law $\nu$, and $\tau=(\tau_i)_{i\in\N}$
i.i.d.\ $\N$-valued random variables with marginal law $\rho$. Assume that $X$ and
$\tau$ are independent, and write $\P\otimes P^\ast$ to denote their joint law. Cut words 
out of the letter sequence $X$ according to $\tau$ (see Fig.~\ref{fig-cutting}), i.e., put
\begin{equation}
\label{Tdefs}
T_0=0 \quad \mbox{ and } \quad T_i=T_{i-1}+\tau_i,\quad i\in\N,
\end{equation}
and let
\begin{equation}
\label{eqndefYi}
Y^{(i)} = \bigl( X_{T_{i-1}+1}, X_{T_{i-1}+2},\dots, X_{T_{i}}\bigr),
\quad i \in \N.
\end{equation}
Under the law $\P\otimes P^\ast$, $Y = (Y^{(i)})_{i\in\N}$ is an i.i.d.\ sequence of words
with marginal distribution $q_{\rho,\nu}$ on $\widetilde{E}$ given by
\begin{equation}
\label{q0def}
\begin{aligned}
\P\otimes P^\ast\big(Y^{(1)}\in(dx_1,\dots,dx_n)\big)
&=q_{\rho,\bm}\big((dx_1,\dots,dx_n)\big) \\[0.2cm]
& = \rho(n) \,\nu(dx_1) \times \dots \times \nu(dx_n), \qquad
n\in\N,\,x_1,\dots,x_n\in E.
\end{aligned}
\end{equation}

\begin{figure}[htbp]
\vspace{-.7cm}
\begin{center}
\includegraphics[scale = .7]{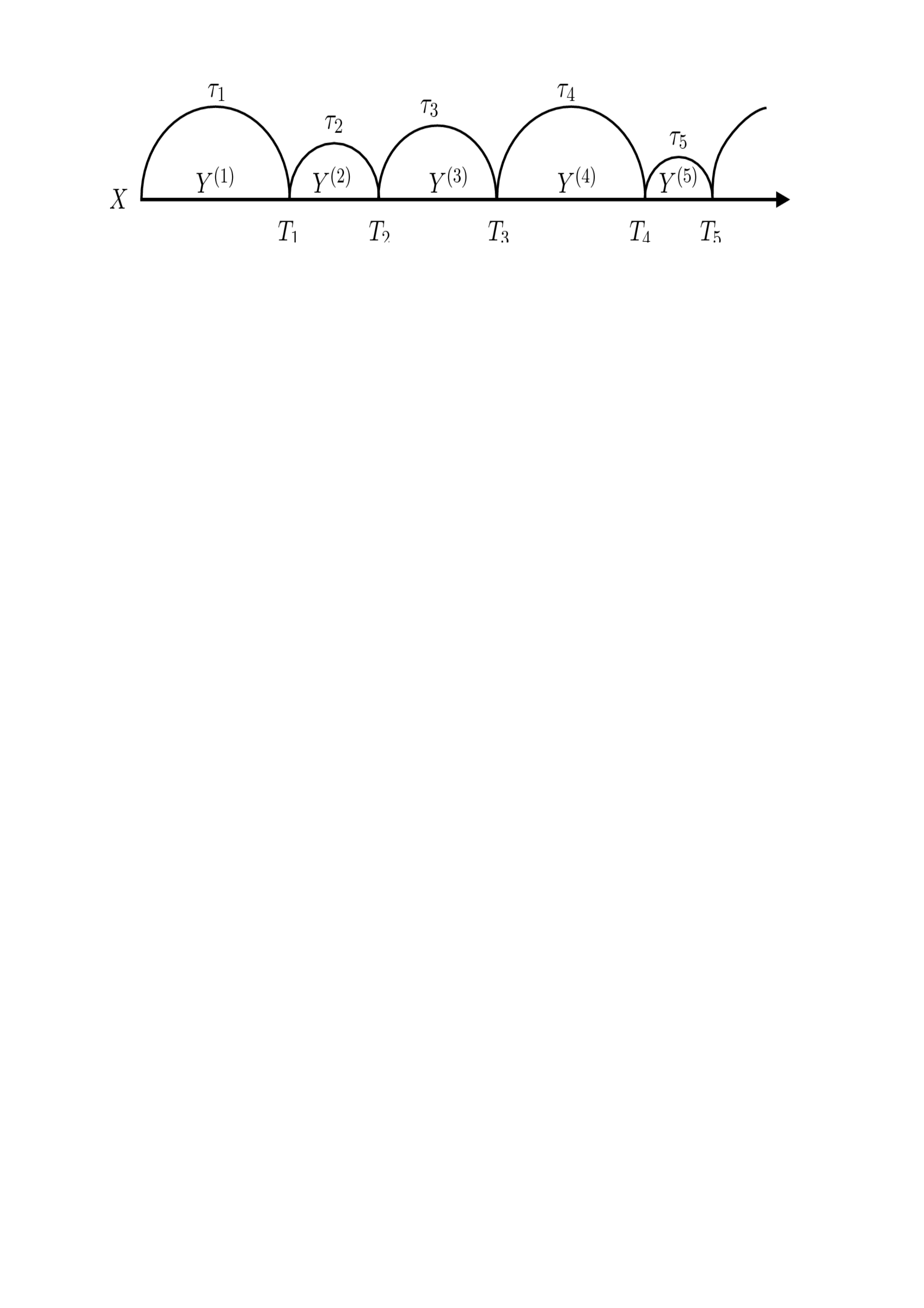}
\vspace{-17.1cm}
\caption{\small Cutting words out of a sequence of letters according to renewal times.}
\label{fig-cutting}
\end{center}
\vspace{-.2cm}
\end{figure}


The reverse operation of \emph{cutting} words out of a sequence of letters is 
\emph{glueing} words together into a sequence of letters. Formally, this is done 
by defining a \emph{concatenation} map $\kappa$ from $\widetilde{E}^\N$ to $E^{\N}$. 
This map induces in a natural way a map from $\cP(\widetilde{E}^\N)$ to $\cP(E^{\N})$,
the sets of probability measures on $\widetilde{E}^\N$ and $E^{\N}$ (endowed with
the topology of weak convergence). The concatenation $q_{\rho,\nu}^{\otimes\N}\circ
\kappa^{-1}$ of $q_{\rho,\nu}^{\otimes\N}$ equals $\nu^{\otimes\N}$, as is evident from 
(\ref{q0def}).


\subsection{Annealed LDP}
\label{S2.1}

Let $\cP^{\mathrm{inv}}(\widetilde{E}^\N)$ be the set of probability measures
on $\widetilde{E}^\N$ that are invariant under the left-shift $\widetilde{\theta}$
acting on $\widetilde{E}^\N$. For $N\in\N$, let $(Y^{(1)},\dots,Y^{(N)})^\mathrm{per}$ 
be the periodic extension of the $N$-tuple $(Y^{(1)},\dots,Y^{(N)})\in\widetilde{E}^N$ 
to an element of $\widetilde{E}^\N$. The \emph{empirical process of $N$-tuples of words} is 
 defined as
\begin{equation}
\label{eqndefRN}
R_N^X = \frac{1}{N} \sum_{i=0}^{N-1}
\delta_{\widetilde{\theta}^i (Y^{(1)},\dots,Y^{(N)})^\mathrm{per}}
\in \mathcal{P}^{\mathrm{inv}}(\widetilde{E}^\N),
\end{equation}
where the supercript $X$ indicates that the words $Y^{(1)},\dots,Y^{(N)}$ are cut from 
the latter sequence $X$. For $Q\in\cP^\mathrm{inv}(\widetilde{E}^\N)$, let $H(Q \mid
q_{\rho,\nu}^{\otimes\N})$ be the \emph{specific relative entropy of $Q$ w.r.t.\ 
$q_{\rho,\nu}^{\otimes\N}$} defined by
\begin{equation}
\label{spentrdef}
H(Q \mid q_{\rho,\nu}^{\otimes\N}) = \lim_{N\rightarrow\infty} \frac{1}{N}\,
h(\pi_N Q \mid q_{\rho,\nu}^N),
\end{equation}
where $\pi_N Q \in \cP(\widetilde{E}^N)$ denotes the projection of $Q$ onto the first
$N$ words, $h(\,\cdot\mid\cdot\,)$ denotes relative entropy, and the limit is
non-decreasing.

For the applications below we will need the following tilted version of $\rho$:  
\begin{equation}
\label{rhoz}
\rho_g(n)=\frac{e^{-gn}~\rho(n)}{\cN(g)} \qquad \text{with}
\qquad \cN(g)=\sum_{n\in\N} e^{-gn}\,\rho(n), \quad g\geq 0.
\end{equation}
Note that, for $g>0$, $\rho_g$ has a tail that is exponentially bounded. The following 
result relates the relative entropies with $q_{\rho_g,\nu}^{\otimes\N}$ and 
$q_{\rho,\nu}^{\otimes\N}$ as reference measures.

\begin{lemma} {\rm \cite{BodHoOp11}}
\label{rhozre}
For $Q\in\cP^\mathrm{inv}(\widetilde{E}^\N)$ and $g\geq0$,
\begin{equation}
\label{rhozrho}
H(Q\mid q_{\rho_g,\nu}^{\otimes\N})
= H(Q\mid q_{\rho,\nu}^{\otimes\N})+\log \cN(g)+g~\E_Q(\tau_1).
\end{equation}
\end{lemma}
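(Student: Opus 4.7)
The plan is to go directly from the definition of the specific relative entropy as a nondecreasing limit of normalized $N$-marginal relative entropies, and to compare the reference measures $q_{\rho,\nu}^{\otimes\N}$ and $q_{\rho_g,\nu}^{\otimes\N}$ via an explicit Radon--Nikodym derivative.

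First I would observe that the two product measures on $\widetilde E^N$ are mutually absolutely continuous on words built from lengths in the support of $\rho$, because by \eqref{q0def} and \eqref{rhoz}, for any word $y_i\in E^{|y_i|}$,
\begin{equation*}
\frac{dq_{\rho_g,\nu}^{N}}{dq_{\rho,\nu}^{N}}(y_1,\dots,y_N)
=\prod_{i=1}^{N}\frac{\rho_g(|y_i|)}{\rho(|y_i|)}
=\frac{\exp\!\bigl(-g\sum_{i=1}^{N}|y_i|\bigr)}{\cN(g)^{N}}.
\end{equation*}
Hence, by the chain rule for relative entropy applied to $\pi_N Q$,
\begin{equation*}
h(\pi_N Q\mid q_{\rho_g,\nu}^{N})
= h(\pi_N Q\mid q_{\rho,\nu}^{N})+N\log\cN(g)+g\sum_{i=1}^{N}\E_{Q}\bigl(|Y^{(i)}|\bigr).
\end{equation*}

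Next I would use the shift-invariance of $Q$ under $\widetilde\theta$, which gives $\E_Q(|Y^{(i)}|)=\E_Q(\tau_1)$ for every $i$, so the last sum equals $Ng\,\E_Q(\tau_1)$. Dividing by $N$ and letting $N\to\infty$, the definition \eqref{spentrdef} yields
\begin{equation*}
H(Q\mid q_{\rho_g,\nu}^{\otimes\N})
= H(Q\mid q_{\rho,\nu}^{\otimes\N})+\log\cN(g)+g\,\E_Q(\tau_1),
\end{equation*}
which is the claim. The case $g=0$ is automatic since $\cN(0)=1$ and $\rho_0=\rho$.

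The only subtle point is the case $\E_Q(\tau_1)=\infty$ (possible when $\alpha=1$): the identity should then be read as an equality in $[0,\infty]$. To justify it, I would truncate the word lengths at level $L$, apply the finite-expectation argument above to the truncated process, and pass $L\to\infty$ using monotone convergence on the $g\,\E_Q(\tau_1^{\wedge L})$ term and lower semicontinuity of relative entropy to handle the two $H(\cdot\mid\cdot)$ terms; alternatively, one can note that the identity at the level of $\pi_N Q$ already holds regardless of finiteness of any individual term, and that all three terms are nonnegative (the last once $g\geq 0$), so the limit identity survives intact in $[0,\infty]$. This bookkeeping is the only nontrivial step; everything else is a one-line computation of a Radon--Nikodym derivative of product measures followed by shift-invariance.
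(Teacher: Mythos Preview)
Your proof is correct. The paper does not actually prove this lemma; it merely cites it from \cite{BodHoOp11} and moves on, so there is no ``paper's own proof'' to compare against. Your argument---computing the explicit Radon--Nikodym derivative $dq_{\rho_g,\nu}^{N}/dq_{\rho,\nu}^{N}$, applying the additive identity for relative entropy under change of reference measure, using shift-invariance of $Q$ to collapse $\sum_{i=1}^N \E_Q(|Y^{(i)}|)$ to $N\,\E_Q(\tau_1)$, and passing to the limit via \eqref{spentrdef}---is the natural and standard one, and is almost certainly what \cite{BodHoOp11} does as well. Your handling of the $\E_Q(\tau_1)=\infty$ case as an identity in $[0,\infty]$ is also appropriate; the finite-$N$ identity holds unconditionally, and since all three terms on the right are in $[0,\infty]$ for $g\geq 0$, the limit passes without issue.
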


\noindent
This result shows that, for $g\geq0$, $m_Q=\E_Q(\tau_1)<\infty$ whenever $H(Q\mid
q_{\rho_g,\nu}^{\otimes\N})<\infty$, which is a special case of \cite{Bi08}, Lemma 7.

The following \emph{annealed LDP} is standard (see e.g.\ Dembo and Zeitouni~\cite{DeZe98}, 
Section 6.5).
\begin{theorem}
\label{aLDP}
For every $g \geq 0$, the family $(\P\otimes P^\ast_g)(R_N^\cdot \in \cdot\,)$, 
$N\in\N$, satisfies the LDP on $\cP^{\mathrm{inv}}(\widetilde{E}^\N)$ with rate $N$ and 
with rate function $I_g^{\mathrm{ann}}$ given by
\begin{equation}
\label{Ianndef}
I_g^{\mathrm{ann}}(Q)= H\big(Q \mid q_{\rho_g,\nu}^{\otimes\N}\big), 
\qquad Q \in \cP^{\mathrm{inv}}(\widetilde{E}^\N).
\end{equation}
This rate function is lower semi-continuous, has compact level sets, has a unique
zero at $q_{\rho_g,\nu}^{\otimes\N}$, and is affine.
\end{theorem}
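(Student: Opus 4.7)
The plan is to reduce the claim to the classical process-level (Donsker--Varadhan level-$3$) LDP for i.i.d.\ sequences taking values in a Polish alphabet. First I would observe that under $\P\otimes P^*_g$ the renewal increments $(\tau_i)_{i\in\N}$ are i.i.d.\ with law $\rho_g$ and are independent of the letter sequence $X$, so by \eqref{q0def} the cut words $(Y^{(i)})_{i\in\N}$ form an i.i.d.\ sequence of $\widetilde{E}$-valued random variables with common marginal $q_{\rho_g,\nu}$. In other words, the joint law of $Y$ under $\P\otimes P^*_g$ is exactly $q_{\rho_g,\nu}^{\otimes\N}$, and the claim becomes an LDP for the periodized empirical process $R_N$ of an i.i.d.\ sequence in the Polish space $\widetilde{E}$.

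The second step is to invoke the standard process-level LDP for such i.i.d.\ sequences; see Dembo and Zeitouni \cite{DeZe98}, Section 6.5 (in particular Theorem 6.5.12, together with the periodization argument that makes the empirical process live in $\cP^{\mathrm{inv}}(\widetilde{E}^\N)$). This yields the LDP on $\cP^{\mathrm{inv}}(\widetilde{E}^\N)$ at speed $N$ with rate function $Q\mapsto H(Q\mid q_{\rho_g,\nu}^{\otimes\N})$, which is precisely $I^{\mathrm{ann}}_g$ in \eqref{Ianndef}. The only bookkeeping item is to note that $\widetilde{E}=\cup_{k\in\N}E^k$, being a countable disjoint union of Polish spaces, is itself Polish under the natural disjoint-union metric, so the Donsker--Varadhan theorem applies verbatim.

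Finally, the qualitative properties of $I^{\mathrm{ann}}_g$ are general facts about the specific relative entropy. Lower semi-continuity and compactness of the level sets are proved in \cite{DeZe98}, Section 6.5; affineness follows from the sub-additivity of $N\mapsto h(\pi_N\,\cdot\mid q_{\rho_g,\nu}^N)$ combined with the ergodic decomposition of shift-invariant measures on $\widetilde{E}^\N$. For the unique zero, the non-decreasing limit \eqref{spentrdef} forces $h(\pi_N Q\mid q_{\rho_g,\nu}^N)=0$ for every $N$, hence $\pi_N Q=q_{\rho_g,\nu}^N$ for every $N$, and therefore $Q=q_{\rho_g,\nu}^{\otimes\N}$. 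Since Theorem \ref{aLDP} is flagged as standard in the excerpt, I do not anticipate any substantive obstacle; the main conceptual point is simply to identify the marginal $q_{\rho_g,\nu}$ under the tilted reference measure and then quote the Polish-alphabet version of the level-$3$ LDP.
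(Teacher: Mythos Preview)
Your proposal is correct and matches the paper's approach exactly: the paper does not give a proof at all but simply cites Dembo and Zeitouni~\cite{DeZe98}, Section~6.5, as the theorem is the standard process-level LDP for the i.i.d.\ word sequence with marginal $q_{\rho_g,\nu}$. Your write-up is in fact more detailed than what the paper provides, since you spell out why the words are i.i.d.\ under the tilted law, why $\widetilde{E}$ is Polish, and why the stated properties of the rate function follow.
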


It follows from Lemma~\ref{rhozre} that 
\begin{equation}\label{Iannz}
I_g^{\rm ann}(Q)=I^{\rm ann}(Q)+\log\cN(g)+gm_Q,
\end{equation}
where $I^{\rm ann}(Q)=H(Q\mid q_{\rho,\nu}^{\otimes \N})$, the annealed rate function 
for $g=0$.


\subsection{Quenched LDP}
\label{S2.2}

To formulate the quenched analogue of Theorem~\ref{aLDP}, we need some more notation. 
Let $\cP^{\mathrm{inv}}(E^{\N})$ be the set of probability measures on $E^{\N}$ 
that are invariant under the left-shift $\theta$ acting on $E^{\N}$. For $Q\in
\cP^{\mathrm{inv}}(\widetilde{E}^\N)$ such that $m_Q < \infty$, define
\begin{equation}
\label{PsiQdef}
\Psi_Q = \frac{1}{m_Q} E_Q\left(\sum_{k=0}^{\tau_1-1}
\delta_{\theta^k\kappa(Y)}\right) \in \cP^{\mathrm{inv}}(E^{\N}).
\end{equation}
Think of $\Psi_Q$ as the shift-invariant version of $Q\circ\kappa^{-1}$ obtained
after \emph{randomizing} the location of the origin. This randomization is necessary
because a shift-invariant $Q$ in general does not give rise to a shift-invariant
$Q\circ\kappa^{-1}$.

For $\tr \in \N$, let $[\cdot]_\tr \colon\,\widetilde{E} \to [\widetilde{E}]_\tr
= \cup_{n=1}^\tr E^n$ denote the \emph{truncation map} on words defined by
\begin{equation}
\label{trunword}
y = (x_1,\dots,x_n) \mapsto [y]_\tr = (x_1,\dots,x_{n \wedge \tr}),
\qquad  n\in\N,\,x_1,\dots,x_n\in E,
\end{equation}
i.e., $[y]_\tr$ is the word of length $\leq\tr$ obtained from the word $y$ by dropping
all the letters with label $>\tr$. This map induces in a natural way a map from 
$\widetilde E^\N$ to $[\widetilde{E}]_\tr^\N$, and from $\cP^{\mathrm{inv}}(\widetilde{E}^\N)$ 
to $\cP^{\mathrm{inv}}([\widetilde{E}]_\tr^\N)$. Note that if $Q\in\cP^{\mathrm{inv}}
(\widetilde{E}^\N)$, then $[Q]_\tr$ is an element of the set
\begin{equation}
\label{Pfin} 
\cP^{\mathrm{inv,fin}}(\widetilde{E}^\N) = \{Q\in\cP^{\mathrm{inv}}(\widetilde{E}^\N)
\colon\,m_Q<\infty\}.  
\end{equation}

Define (w-lim means weak limit)
\begin{equation}
\label{Rdef}
\cR = \left\{Q\in\cP^{\mathrm{inv}}(\widetilde{E}^\N)\colon\,
{\rm w}-\lim_{N\to\infty} \frac{1}{N}
\sum_{k=0}^{N-1} \delta_{\theta^k\kappa(Y)}=\nu^{\otimes\N} \quad Q-a.s.\right\},
\end{equation}
i.e., the set of probability measures in $\cP^{\mathrm{inv}}(\widetilde{E}^\N)$ under
which the concatenation of words almost surely has the same asymptotic statistics as 
a typical realization of $X$.

\begin{theorem}
\label{qLDP}
{\rm (Birkner~\cite{Bi08}; Birkner, Greven and den Hollander~\cite{BiGrdHo10})}
Assume {\rm (\ref{rhocond}--\ref{mgffin})}. Then, for $\nu^{\otimes \N}$--a.s.\ 
all $X$ and all $g \in [0,\infty)$, the family of (regular) conditional probability 
distributions $P^\ast_g(R_N^X \in \cdot \mid X)$, $N\in\N$, satisfies the LDP on 
$\cP^{\mathrm{inv}}(\widetilde{E}^\N)$ with rate $N$ and with deterministic rate 
function $I^{\mathrm{que}}_g$ given by
\begin{equation}
\label{eqgndefinitionIalgz}
I^\mathrm{que}_g(Q) = \left\{\begin{array}{ll}
I^{\rm ann}_g(Q),
&\mbox{if } Q\in\cR,\\
\infty,
&\mbox{otherwise},
\end{array}
\right.
\text{ when } g>0,
\end{equation}
and
\begin{equation}
\label{eqgndefinitionIalg}
I^\mathrm{que}(Q) = \left\{\begin{array}{ll}
I^\mathrm{fin}(Q),
&\mbox{if } Q\in\cP^{\mathrm{inv,fin}}(\widetilde{E}^\N),\\
\lim_{\tr \to \infty} I^\mathrm{fin}\big([Q]_\tr\big),
&\mbox{otherwise},
\end{array}
\right.
\text{ when } g=0,
\end{equation}
where
\begin{equation}
\label{eqnratefctexplicitalg}
I^\mathrm{fin}(Q) = H(Q \mid q_{\rho,\nu}^{\otimes\N}) 
+ (\alpha-1) \, m_Q \,H\big(\Psi_{Q} \mid \nu^{\otimes\N}\big).
\end{equation}
This rate function is lower semi-continuous, has compact level sets, has a unique zero
at $q_{\rho_g,\nu}^{\otimes\N}$, and is affine.
\end{theorem}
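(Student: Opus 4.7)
The plan is to invoke the two cited sources, Birkner~\cite{Bi08} for the case $g=0$ and Birkner, Greven and den Hollander~\cite{BiGrdHo10} for the case $g>0$ (and to extend the former to the case $g=0,m_Q=\infty$ via truncation). Since the underlying letter sequence $X$ is frozen, the quenched LDP must differ from the annealed LDP (Theorem~\ref{aLDP}) by enforcing consistency between the word statistics under $Q$ and the empirical letter statistics $\nu^{\otimes\N}$ of $X$. The key reduction is: use the annealed LDP as an upper bound ($I^\mathrm{que}_g \geq I^\mathrm{ann}_g$ holds trivially from $\P\otimes P^\ast_g = \E[P^\ast_g(\cdot\mid X)]$ combined with Varadhan), and produce matching lower bounds by constructing $X$-measurable exponential-scale strategies that realize a prescribed empirical process $Q$.

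For $g>0$, the tilted length distribution $\rho_g$ has exponential tails, so typical word lengths under $P^\ast_g$ are $O(1)$ and the renewal process is well-behaved. The strategy is: for $Q \in \cR$, the concatenation $\kappa(Y)$ has the same asymptotic statistics as $X$ itself by definition of $\cR$, so one can realize $R_N^X\approx Q$ by cutting words from the typical $X$-sequence at renewal times with essentially the same cost as under the annealed law; conversely for $Q \notin \cR$, a mismatch at the letter level forces a super-exponential cost (faster than $e^{-Nc}$ for every $c$), giving $I^\mathrm{que}_g(Q)=\infty$. This is the content of the Birkner--Greven--den~Hollander LDP in \cite{BiGrdHo10}.

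For $g=0$ and $m_Q<\infty$, the extra term $(\alpha-1)m_Q H(\Psi_Q\mid\nu^{\otimes\N})$ comes from a rare-stretch accounting: to force the concatenated letter sequence to have empirical distribution $\Psi_Q$ (instead of $\nu^{\otimes\N}$), the words must be carved out of atypical long stretches of $X$. Because $\rho(n)\asymp n^{-\alpha}$, the probability of finding a stretch of length $L$ in $X$ whose empirical letter distribution is close to $\Psi_Q$ decays like $\exp[-LH(\Psi_Q\mid\nu^{\otimes\N})]$, and the entropic cost of selecting such a stretch from a window of size $L^\alpha$ gives the multiplier $\alpha-1$; aggregating over all $N/m_Q$ words produces the stated formula. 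The case $m_Q=\infty$ is handled by the truncation $[Q]_\tr$: use the projection $[\cdot]_\tr$, apply the finite-mean result to $[Q]_\tr\in\cP^{\mathrm{inv,fin}}(\widetilde{E}^\N)$, and pass to the limit $\tr\to\infty$, exploiting lower semi-continuity of $I^\mathrm{fin}$ together with the projective nature of the LDP (contraction principle for the projection, plus a Dawson--G\"artner-type argument on the inverse limit).

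The main obstacle is the lower bound in the $g=0$ regime: producing an almost sure $X$-measurable construction of a word sequence whose empirical process is near a prescribed $Q$ with the correct exponential cost. This requires a combinatorial second moment / coarse-graining estimate showing that rare stretches of $X$ with the right empirical letter distribution occur at the predicted frequency, and careful control as $\tr\to\infty$ when $m_Q=\infty$ to ensure the truncated costs $I^\mathrm{fin}([Q]_\tr)$ converge monotonically to $I^\mathrm{que}(Q)$. Affinity, compactness of level sets, and uniqueness of the zero at $q_{\rho_g,\nu}^{\otimes\N}$ follow from \eqref{Iannz} and the corresponding properties of $I^\mathrm{ann}_g$, once the identification of $I^\mathrm{que}_g$ is in hand.
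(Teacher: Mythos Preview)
The paper does not prove this theorem; it is quoted verbatim from the cited references with no accompanying proof environment, so there is no ``paper's own proof'' to compare against. Your sketch is a reasonable summary of the heuristics behind the results in \cite{Bi08} and \cite{BiGrdHo10}, but note that you have the attributions reversed: Birkner~\cite{Bi08} handles the exponentially-tailed case (hence $g>0$, where $\rho_g$ has exponential tails and the rate function is simply $I^{\mathrm{ann}}_g$ restricted to $\cR$), while Birkner, Greven and den Hollander~\cite{BiGrdHo10} treat the polynomial-tail case $g=0$ and derive the extra term $(\alpha-1)m_Q H(\Psi_Q\mid\nu^{\otimes\N})$ together with the truncation extension to $m_Q=\infty$.

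Your heuristic for the factor $(\alpha-1)$ is in the right spirit but imprecise. The mechanism in \cite{BiGrdHo10} is not that one searches a window of size $L^\alpha$; rather, to place $N$ words with atypical letter statistics $\Psi_Q$ one must locate $N$ rare stretches in $X$, each of length $\sim m_Q$, whose empirical letter profile is close to $\Psi_Q$. By Sanov, such stretches occur in $X$ at spatial density $\exp[-m_Q H(\Psi_Q\mid\nu^{\otimes\N})]$, so the gaps between usable stretches are of exponential order in $m_Q$; bridging those gaps with a single excursion under $\rho$ costs $\alpha\log(\text{gap})$ per word because $\rho(n)\asymp n^{-\alpha}$, and after subtracting the annealed contribution $H(Q\mid q_{\rho,\nu}^{\otimes\N})$ (which already accounts for one factor of the word-length entropy) the residual is exactly $(\alpha-1)m_Q H(\Psi_Q\mid\nu^{\otimes\N})$. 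The actual proof of the lower bound in \cite{BiGrdHo10} is considerably more delicate than a second-moment estimate and uses an explicit coarse-graining construction together with ergodic-theoretic input; your one-line ``second moment / coarse-graining'' description would not suffice as a proof, though it is fine as an outline referring the reader to the source.
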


It was shown in \cite{Bi08}, Lemma 2, that
\begin{equation}
\label{Requiv}
\Psi_{Q} =\nu^{\otimes\N} \quad \Longleftrightarrow \quad  
Q\in\cR \qquad \mbox{ on } \cP^{\mathrm{inv,fin}}(\widetilde{E}^\N),
\end{equation}
which explains why the restriction $Q\in\cR$ appears in 
\eqref{eqgndefinitionIalgz}. For more background, see \cite{BiGrdHo10}. 

Note that $I^\mathrm{que}(Q)$ requires a truncation approximation when 
$m_Q=\infty$, for which case there is no closed form expression like in
\eqref{eqnratefctexplicitalg}. As we will see later on, the cases 
$m_Q<\infty$ and $m_Q=\infty$ need to be separated. For later reference 
we remark that, for all $Q\in\cP^{\mathrm{inv}}(\widetilde{E}^\N)$,
\begin{equation}
\label{truncapproxcont}
\begin{aligned}
I^\mathrm{ann}(Q)
&= \lim_{\tr \to \infty} I^\mathrm{ann}([Q]_\tr),\\
I^\mathrm{que}(Q)
&= \lim_{\tr \to \infty} I^\mathrm{que}([Q]_\tr),
\end{aligned}
\end{equation}
as shown in \cite{BiGrdHo10}, Lemma A.1. 


\section{Variational formulas for excess free energies}
\label{S3}

This section uses the LDP of Section~\ref{S2} to derive variational formulas for the 
excess free energy of the quenched and the annealed version of the combined model. 
The quenched version is treated in Section~\ref{S3.1}, the  annealed version 
in Section~\ref{S3.2}. The results in Sections~\ref{S3.1}--\ref{S3.2} are used 
in Section~\ref{S3.3} to prove Theorem~\ref{freeenegvar}. 

In the combined model words are made up of letters from the alphabet $E=\hE\times\bE$, 
where $\hE$ and $\bE$ are subsets of $\R$, and are cut from the letter sequence $\o
=((\ho_i,\bo_i))_{i\in\N}$, where $\ho=(\ho_i)_{i\in\N}$ and $\bo=(\bo_i)_{i\in\N}$ 
are i.i.d.\ sequences of $\hE$-valued and $\bE$-valued random variables with joint
common law $\nu=\hm\otimes\bm$. Let $\hat\pi$ and $\bar\pi$ be the projection maps 
from $E$ onto $\hE$ and $\bE$, respectively, i.e $\hat\pi((\ho_1,\bo_1))=\ho_1$ and  
$\bar\pi((\ho_1,\bo_1))=\bo_1$ for $(\ho_1,\bo_1)\in E$. These maps extend naturally 
to $E^\N$, $\wE$, $\wE^\N$, $\cP\big(\wE\big)$ and $\cP\big(\wE^\N\big)$. For instance, 
if $\xi\in E^\N$, i.e., $\xi=((\ho_i,\bo_i))_{i\in\N}$, then $\hat\pi\xi=\ho=(\ho_i)_{i
\in\N}$ and $\bar\pi\xi=\bo=(\bo_i)_{i\in\N}$.

As before, we will write $k$, $\hat k$ and $\bar k$ for a quantity $k$ associated with 
the copolymer with pinning model, the copolymer model, respectively, the pinning model. 
For instance, if $Q\in\cP^{\rm inv}\big(\wE^\N\big)$, $\hQ\in\cP^{\rm inv}\big(\whE^\N
\big)$ and $\bQ\in\cP^{\rm inv}\big(\wbE^\N\big)$, then the rate functions $I^{\rm ann}
(Q)=H(Q|q_{\rho,\hm\otimes\bm}^{\otimes\N})$, $\hat I^{\rm ann}(\hQ)=H(\hQ|q_{\rho,
\hm}^{\otimes\N})$, $\bar I^{\rm ann}(\bQ)=H(\bQ|q_{\rho,\bm}^{\otimes\N})$ and the 
sets $\cR$, $\hcR$, $\bar\cR$ are defined as in \eqref{Rdef}.
 
The LDPs of the laws of the empirical processes $R^{\ho}_N=\hat\pi R^{\o}_N$ and 
$R^{\bo}_N=\bar\pi R^{\o}_N$ can be derived from those of $R^{\o}_N$ via the 
contraction principle (see e.g.\ Dembo and Zeutouni~\cite{DeZe98}, Theorem 4.2.1), 
because the projection maps $\hat\pi$ and $\bar\pi$ are continuous. In particular, 
for any $\hQ\in\cP^{\rm inv}\big(\whE^\N\big)$ and  $\bQ\in\cP^{\rm inv}\big(\wbE^\N
\big)$
\begin{equation}
\label{conprin}
\hat I^{\rm que}(\hQ)
= \inf_{Q\in\cP^{\rm inv}\big(\wE^\N\big)\colon\,\atop\hat\pi Q=\hQ } \Iq(Q), 
\qquad
\bar I^{\rm que}(\bQ) 
= \inf_{Q\in\cP^{\rm inv}\big(\wE^\N\big)\colon\,\atop\bar\pi Q=\bQ } 
I^{\rm que}(Q),
\end{equation}
where $\hat\pi Q=Q\circ(\hat\pi)^{-1}$ and $\bar\pi Q=Q\circ(\bar\pi)^{-1}$. Similarly, 
we may express $\hat I^{\rm ann}$ and $\bar I^{\rm ann}$ in terms of $I^{\rm ann}$.


\subsection{Quenched excess free energy}
\label{S3.1}

Abbreviate
\begin{equation}
\label{Csetdef}
\cC^{\rm fin} = \left\{Q\in\cP^\mathrm{inv}(\widetilde{E}^\N)\colon\,
I^\mathrm{ann}(Q)<\infty, \, m_Q<\infty\right\}. 
\end{equation}

\begin{theorem}
\label{varfloc}
Assume {\rm (\ref{rhocond})} and {\rm (\ref{mgffin})}. Fix $\hb$, $\hh>0$, $\bb\geq0$ 
and $\th\in\R$.\\
(i) The quenched excess free energy is given by
\begin{equation}
\label{gvarexp}
g^\mathrm{que}(\hb,\hh,\bb,\th) 
= \inf\left\{g\in\R\colon\,S^\mathrm{que}(\hb,\hh,\bb;g)-\th<0\right\},
\end{equation}
where
\begin{equation}
\label{Sdef}
S^\mathrm{que}(\hb,\hh,\bb;g) = \sup_{Q\in\cC^\mathrm{fin}\cap\cR}
\left[\bb \Phi(Q)+\Phi_{\hb, \hh}(Q)-g m_Q-I^\mathrm{ann}(Q)\right]
\end{equation}
with
\begin{eqnarray}
\label{pinPhidef}
\Phi(Q) &=& \int_{\bE} \bo_1\, (\bar\pi_{1,1}Q)(d\bo_1)\\
\label{coPhidef}
\Phi_{\hb,\hh}(Q) 
&=& \int_{\widetilde{\hE}} (\hat\pi_1 Q)(d\ho)\,\log \phi_{\hb, \hh}(\ho),\\
\label{phidef}
\phi_{\hb, \hh}(\ho) 
&=& \tfrac12\left(1+\exp\left[-2\hb \hh\,\tau_1-2\hb\,\sum_{k=1}^{\tau_1}
\ho_k\right]\right).
\end{eqnarray}
Here, the map $\bar\pi_{1,1}\colon\,\wE^\N\to \bE$ is the projection onto the first 
letter of the first word in the sentence consisting of words cut out from $\bo$, i.e., 
$\bar\pi_{1,1}Q=Q\circ(\bar\pi_{1,1})^{-1}$, while the map $\hat\pi_1\colon\,\wE^\N\to\whE$ 
is the projection onto the first word in the sentence consisting of words cut out 
from $\ho$, i.e., $\hat\pi_1 Q=Q\circ(\hat\pi_1)^{-1}$, and $\tau_1$ is the length
of the first  word.\\ 
(ii) An alternative variational formula at $g=0$ is $S^\mathrm{que}(\hb,\hh,\bb;0)
=S_{*}^\mathrm{que}(\hb,\hh,\bb)$ with
\begin{equation}
\label{Sdefalt}
S_*^\mathrm{que}(\hb,\hh,\bb) = \sup_{Q\in\cC^\mathrm{fin}} 
\left[\bb \Phi(Q)+\Phi_{\hb, \hh}(Q)-I^\mathrm{que}(Q)\right].
\end{equation}
(iii) The map $g \mapsto S^\mathrm{que}(\hb,\hh,\bb;g)$ is lower semi-continuous, 
convex and non-increasing on $\R$, is infinite on $(-\infty,0)$, and is finite, continuous 
and strictly decreasing on $(0,\infty)$. 
\end{theorem}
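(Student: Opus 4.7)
The plan is to prove Theorem \ref{varfloc} by combining an excursion decomposition of the excess partition sum with Varadhan's lemma applied to the quenched LDP of Theorem \ref{qLDP}.

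\textbf{Step 1 (excursion decomposition).} I would decompose each $\pi\in\Pi_n$ into its excursion lengths $(\tau_1,\ldots,\tau_N)$ with $\sum_j\tau_j=n$ and average over the $\pm 1$ signs of each excursion. The copolymer interaction contributes $0$ above the interface and $-2\hb\hh\tau_j-2\hb\sum_k\ho^{(j)}_k$ below, so the sign average produces the factor $\phi_{\hb,\hh}(\ho^{(j)})$; the pinning contributes $\exp[\bb\bo_{*,j}-\th]$ at each return point. Recognizing $\sum_j\log\phi_{\hb,\hh}(\ho^{(j)})$ and $\sum_j\bo_{*,j}$ as $N$ times the linear functionals $\Phi_{\hb,\hh}(R^\o_N)$ and $\Phi(R^\o_N)$, one obtains for the excess partition sum
\begin{equation*}
Z_n=\sum_{N}\sum_{\tau_1+\cdots+\tau_N=n}\prod_{j=1}^N\rho(\tau_j)\,\exp\bigl[N(\bb\Phi+\Phi_{\hb,\hh}-\th)(R^\o_N)\bigr].
\end{equation*}

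\textbf{Step 2 (tilting and LDP).} To extract the $g$-dependence, I tilt the excursion law via $\rho(\tau_j)=\cN(g)\,e^{g\tau_j}\rho_g(\tau_j)$, giving
\begin{equation*}
Z_n\,e^{-gn}=\sum_{N}\cN(g)^N\,E^*_g\Bigl[e^{N(\bb\Phi+\Phi_{\hb,\hh}-\th)(R^\o_N)}\,1_{\{\sum_j\tau_j=n\}}\Bigr],
\end{equation*}
where $E^*_g$ uses $\rho_g$ for excursion lengths. Theorem \ref{qLDP} gives the quenched LDP for $R^\o_N$ under $P^*_g$ with rate function $I^{\rm que}_g$, and on $\{R^\o_N\approx Q\}$ the constraint forces $N\approx n/m_Q$. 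A Varadhan/Laplace computation (first fixing the ratio $N/n$, then optimizing over it) yields, for $g>0$ and $\P$-a.s.~$\o$,
\begin{equation*}
\lim_{n\to\infty}\tfrac{1}{n}\log\bigl(Z_n\,e^{-gn}\bigr)=\sup_{Q\in\cR\cap\cC^{\rm fin}}\tfrac{1}{m_Q}\bigl[\bb\Phi(Q)+\Phi_{\hb,\hh}(Q)-\th+\log\cN(g)-I^{\rm que}_g(Q)\bigr].
\end{equation*}
Lemma \ref{rhozre} rewrites $I^{\rm que}_g=I^{\rm ann}+\log\cN(g)+g m_Q$ on $\cR$, so the bracket reduces to $\bb\Phi(Q)+\Phi_{\hb,\hh}(Q)-\th-g m_Q-I^{\rm ann}(Q)$, whose sup (up to the $m_Q^{-1}$ normalization) is $\leq 0$ exactly when $S^{\rm que}(\hb,\hh,\bb;g)\leq\th$. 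One reads off $g\geq g^{\rm que}$ iff $S^{\rm que}(\hb,\hh,\bb;g)\leq\th$, proving (i).

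\textbf{Step 3 (parts (ii) and (iii)).} For (ii), at $g=0$ the quenched LDP uses the rate function $I^{\rm que}$ from \eqref{eqgndefinitionIalg} with penalty $(\alpha-1)m_Q H(\Psi_Q\mid\nu^{\otimes\N})$ from \eqref{eqnratefctexplicitalg}. The equality $S^{\rm que}(\hb,\hh,\bb;0)=S_*^{\rm que}(\hb,\hh,\bb)$ is obtained by noting that on $\cR\cap\cC^{\rm fin}$ the penalty vanishes by \eqref{Requiv}, so restriction to $\cR$ matches the formula in (i) at $g=0$, while any $Q\in\cC^{\rm fin}\setminus\cR$ pays precisely the penalty $I^{\rm que}(Q)-I^{\rm ann}(Q)>0$, which exactly offsets the gain from enlarging the feasible set. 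For (iii), $g\mapsto S^{\rm que}(\hb,\hh,\bb;g)$ is a supremum of affine-decreasing functions of $g$ with slopes $-m_Q\leq-1$, hence convex, lower semi-continuous, non-increasing, and strictly decreasing wherever finite. Infiniteness on $(-\infty,0)$ follows by taking $Q$ supported on arbitrarily long words so that $-g m_Q\to+\infty$; finiteness on $(0,\infty)$ uses \eqref{mgffin} to bound $\Phi(Q)$ and $\Phi_{\hb,\hh}(Q)$ against $m_Q$ and $I^{\rm ann}(Q)$, together with the exponential tail of $\rho_g$ for $g>0$.

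\textbf{Main obstacle.} The delicate step is the rigorous application of Varadhan's lemma: $\Phi$ and $\Phi_{\hb,\hh}$ are unbounded functionals on $\cP^{\rm inv}(\wE^\N)$, so one needs truncation via $[\,\cdot\,]_{\rm tr}$ combined with \eqref{truncapproxcont} and exponential-moment control from \eqref{mgffin} to justify exchanging the sup and the limit. A second subtlety is coupling the $N$-level LDP to the $n$-level free-energy statement while handling the constraint $\sum\tau_j=n$; this requires a joint asymptotic argument over $(N/n, Q)$ in the spirit of \cite{BodHoOp11}. A third point is identifying $S^{\rm que}(\hb,\hh,\bb;0)$ with $S_*^{\rm que}(\hb,\hh,\bb)$ in (ii), which requires carefully matching the $\Psi_Q$-penalty in $I^{\rm que}$ against the $\cR$-constraint in $I^{\rm ann}$, relying on \eqref{Requiv} and \eqref{truncapproxcont} to extend beyond the $m_Q<\infty$ case.
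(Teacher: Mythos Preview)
Your overall plan is sound, but it diverges from the paper's proof in one structural way and contains a genuine gap in part (ii).

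\medskip
\textbf{Difference in route for (i).} The paper does not work at the $n$-level with the constraint $\sum_j\tau_j=n$ and a joint optimization over $(N/n,Q)$. Instead it passes to the generating function $\sum_{n}Z_n^{\o}e^{-gn}=\sum_N F_N^{\o}(g)$, which \emph{removes} the constraint entirely: the $k_i$'s become a free renewal process under $\rho_g$, and $F_N^{\o}(g)=(\cN(g)e^{-\th})^N E^*_g[\exp(N(\Phi_{\hb,\hh}+\bb\Phi)(R_N^{\o}))]$. The quenched excess free energy is then read off as the radius of convergence of the power series, i.e.\ the infimum of $g$ for which $\limsup_N N^{-1}\log F_N^{\o}(g)<0$. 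Varadhan's lemma (with the truncation issues you mention) is applied purely at the $N$-level to identify this limsup with $S^{\rm que}(\hb,\hh,\bb;g)-\th$. This avoids the two-scale $(N/n,Q)$ argument you propose; your route is in principle workable but strictly harder, and the generating-function trick is what \cite{BodHoOp11} and the present paper actually use.

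\medskip
\textbf{Gap in (ii).} Your argument that the penalty $I^{\rm que}(Q)-I^{\rm ann}(Q)>0$ ``exactly offsets the gain from enlarging the feasible set'' is not a proof. The inequality $S_*^{\rm que}\geq S^{\rm que}(0)$ is immediate from $I^{\rm que}=I^{\rm ann}$ on $\cR\cap\cC^{\rm fin}$, but the reverse inequality is the substance of the claim and does not follow from a pointwise comparison. What is needed is an approximation statement: for every $Q\in\cC^{\rm fin}$ there exists a sequence $(Q_n)$ in $\cR\cap\cC^{\rm fin}$ with $Q_n\to Q$ weakly, $\Phi_{\hb,\hh}(Q_n)\to\Phi_{\hb,\hh}(Q)$, $\Phi(Q_n)\to\Phi(Q)$, and $I^{\rm ann}(Q_n)\to I^{\rm que}(Q)$. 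The paper obtains this (in its Appendix) by first discretizing the alphabet $E=\R^2$ to a finite grid $D_M$, invoking the corresponding lemma for finite alphabets from \cite{BodHoOp11}, and then letting $M\to\infty$ with suitable monotonicity and Fatou arguments for the functionals. Without such an approximation lemma, there is no reason the unconstrained supremum with the larger rate function should coincide with the $\cR$-constrained supremum with the smaller one.

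\medskip
Your treatment of (iii) is essentially the paper's: convexity and lower semi-continuity from the supremum of affine functions, strict decrease from $m_Q\geq 1$, infiniteness on $(-\infty,0)$ by sending $m_Q\to\infty$, and finiteness on $(0,\infty)$ from entropy and moment bounds on $\Phi$ and $\Phi_{\hb,\hh}$.
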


\begin{proof}
The proof is an adaptation of the proof of Theorem 3.1 in \cite{BodHoOp11} and comes
in 3 steps. 

\medskip\noindent
{\bf 1.} 
Suppose that $\pi\in\Pi_n$ has $t_n=t_n(\pi)$ excursions away from the interface. If 
$k_i$ denote the times at which $\pi$ visits the interface, then the Hamiltonian reads  
\begin{equation}
\label{copadHamexcsplit}
\begin{split}
H_n^{\hb, \hh,\bb,\th,\o}(\pi)
&=\hb\sum_{k=1}^n(\ho_k+\hh)\left[{\rm sign}(\pi_{k-1},\pi_k)-1
\right]+\sum_{k=1}^{n} (\bb \bo_k-\th)1_{\{\pi_k=0\}}\cr
&= \sum_{i=1}^{t_n}\left[\bb \bo_{k_{i}}-\th-2\,\hb\,1_{A^-_i}
\sum_{k\in I_i}(\ho_k+\hh) \right], 
\end{split}
\end{equation} 
where $A^-_i$ is the event that the $i$-th excursion is below the interface and $I_i
=(k_{i-1},k_i]\cap\N$. Since each excursion has equal probability to lie below or 
above the interface, the $i$-th excursion contributes 
\begin{equation}
\label{ithexcontri}
\phi_{\hb, \hh}(\ho_{I_i})\,e^{\bb \bo_{k_{i}}-\th}
=\tfrac12\left(1+\exp\left[-2\,\hb\sum_{k\in I_i}
(\ho_k+\hh)\right]\right)\,e^{\bb \bo_{k_{i}}-\th} 
\end{equation} 
to the partition sum $Z_{n}^{\hb, \hh,\bb,\th,\o}$, where $\ho_{I_i}$ is the word in 
$\whE$ cut out from $\ho$ by the $i$-th excursion interval $I_i$. Consequently, we have 
\begin{equation}
Z_{n}^{\hb, \hh,\bb,\th,\o} = \sum_{N\in\N}\,\,\sum_{0=k_0<k_1<\cdots<k_N=n}\,\, 
\prod_{i=1}^N \rho(k_i-k_{i-1})\,e^{(\bb \bo_{k_{i}}-\th)}
e^{\log \phi_{\hb, \hh}(\ho_{I_i})}.
\end{equation}
Therefore, summing over $n$, we get
\begin{equation}
\label{copadZFNrel}
\sum_{n\in\N}  Z_{n}^{\hb, \hh,\bb,\th,\o} e^{-gn} 
= \sum_{N\in\N} F_N^{\hb, \hh,\bb,\th,\o}(g),
\qquad g\geq0,
\end{equation}
with
\begin{equation}
\label{FNdef}
\begin{aligned}
F_N^{\hb, \hh,\bb,\th,\o}(g)&=
e^{-N\th}
\sum_{0=k_0<k_1<\cdots<k_N<\infty}\,\, 
\left(\prod_{i=1}^N \rho(k_i-k_{i-1})\right)\,e^{-g(k_i-k_{i-1})}\\
&\qquad\qquad\times
\exp\left[\sum_{i=1}^N \left( \log \phi_{\hb, \hh}(\ho_{I_i})
+\bb~\bo_{k_{i}}\right)\right]\\ 
&= \left(\mathcal{N}(g)\,e^{-\th}\right)^N
\sum_{0=k_0<k_1<\cdots<k_N<\infty}\,\, 
\left(\prod_{i=1}^N \rho_g(k_i-k_{i-1})\right)\\
&\qquad\qquad\times
\exp\left[\sum_{i=1}^N \left( \log \phi_{\hb, \hh}(\ho_{I_i})
+\bb~\bo_{k_{i}}\right)\right]\\
&= \left(\mathcal{N}(g)\,e^{-\th}\right)^N
\,E^*_g\left(\exp\left [N\left(\Phi_{\hb, \hh}(R_N^{\o})+\bb\,
\Phi(R_N^{\o})\right)\right]\right),
\end{aligned}
\end{equation}
where
\begin{equation}
\label{copadempprocomega}
R_N^{\o}\big((k_i)_{i=0}^N\big) = \frac{1}{N} \sum_{i=1}^N 
\delta_{\widetilde{\theta}^i(\o_{I_1},\ldots,\o_{I_N})^{\mathrm{per}}}
\end{equation}
denotes the \emph{empirical process of $N$-tuples of words} cut out from $\o$ by 
the $N$ successive excursions, and $\Phi_{\hb,\hh},\Phi$ are defined in 
(\ref{pinPhidef}--\ref{phidef}).

\medskip\noindent
{\bf 2.}
The left-hand side of (\ref{copadZFNrel}) is a power series with radius of convergence 
$g^{\rm que}(\hb,\hh,\bb,\th)$ (recall \eqref{copadefe}). Define
\begin{equation}
\label{Slimdef}
\begin{aligned}
s^\mathrm{que}(\hb,\hh,\bb;g)  
&= \log \mathcal{N}(g) + \limsup_{N\to\infty} \frac{1}{N} \log 
E^*_g\left(\exp\left [N\left(\Phi_{\hb, \hh}(R_N^{\o})+\bb\,
\Phi(R_N^{\o})\right)\right]\right)
\end{aligned}
\end{equation}
and note that the limsup exists and is constant (possibly infinity) $\omega$-a.s.\ 
because it is measurable w.r.t.\ the tail sigma-algebra of $\omega$ (which is trivial). 
Note from \eqref{FNdef} and \eqref{Slimdef} that
\begin{equation}
\label{copadSlim}
\begin{aligned}
s^\mathrm{que}(\hb,\hh,\bb;g)-\th 
&= \limsup_{N\to\infty} \frac{1}{N} \log F_N^{\hb, \hh,\bb,\th,\o}(g).
\end{aligned}
\end{equation}
By \eqref{copadefe}, the left-hand side of (\ref{copadZFNrel}) is a power series that 
converges for $g>g^\mathrm{que}(\hb,\hh,\bb,\th)$ and diverges for $g<g^\mathrm{que}
(\hb,\hh,\bb,\th)$. Further, it follows from the first equality of \eqref{FNdef} and 
\eqref{copadSlim}, that the map $g\mapsto s^\mathrm{que}(\hb,\hh,\bb;g)$ is non-increasing. 
In particular, it is strictly decreasing when finite. This we show in Step 3 below. 
Hence we have
\begin{equation}
\label{roc}
g^\mathrm{que}(\hb,\hh,\bb,\th) 
= \inf\left\{g\in\R\colon\,s^\mathrm{que}(\hb,\hh,\bb;g)-\th<0\right\}.
\end{equation}

\medskip\noindent
{\bf 3.}
We claim that, for any $\hb, \hh>0$ and  $\bb\geq0$, the map $g\mapsto \bar 
S^\mathrm{que}(\hb,\hh,\bb;g)$ is finite on $(0,\infty)$ and infinite on 
$(-\infty,0)$ (see Fig.~\ref{fig-varfe}), and 
\begin{equation} 
\label{seqsbar}
s^\mathrm{que}(\hb,\hh,\bb;g)= S^\mathrm{que}(\hb,\hh,\bb;g)\quad \forall\, g\in\R.
\end{equation}
Note from the contraction principle in \eqref{conprin} that $\hat I^{\rm ann}(\hat\pi Q)$ 
and $\bar I^{\rm ann}(\bar\pi Q)$ are finite whenever $I^{\rm ann}(Q)<\infty$. Therefore, 
for any $\hb>0$, $\bb\geq0$ and $\hh>0$, it follows from Lemmas~\ref{pinphifin} and 
\ref{finitephicop} in Appendix~\ref{appA} that $\bb\Phi(Q)+\Phi_{\hb, \hh}(Q)<\infty$ 
whenever $I^{\rm ann}(Q)<\infty$. This implies that the map $g\mapsto S^\mathrm{que}
(\hb,\hh,\bb;g)$ is convex and lower semi-continuous, since, by \eqref{Sdef}, 
$S^\mathrm{que}(\hb,\hh,\bb;g)$ is the supremum of a family of functions that are 
finite and linear (and hence continuous) in $g$. Now the fact that $g\mapsto 
S^\mathrm{que}(\hb,\hh,\bb;g)$ is strictly 
decreasing when finite follows as follows: Suppose that $g_1<g_2$, and 
$S^\mathrm{que}(\hb,\hh,\bb;g_1)<\infty$. Then it follows from the fact $m_Q\geq 1$ 
and \eqref{Sdef} that 
\begin{equation}
S^\mathrm{que}(\hb,\hh,\bb;g_2)- S^\mathrm{que}(\hb,\hh,\bb;g_1)\leq-(g_2-g_1)<0.
\end{equation}
Further, we will show in Section \ref{Sec6.1} that $s^\mathrm{que}(\hb,\hh,\bb;g)<\infty$, 
for all $g>0$. This and convexity imply continuity on $(0,\infty).$ These prove 
(iii) of the theorem.

The rest of the proof follows from the claim in \eqref{seqsbar}, whose 
proof we defer to Appendix~\ref{appB}.

\end{proof}

\begin{figure}[htbp]
\vspace{1.5cm}
\begin{minipage}[hbt]{6cm}
\centering
\setlength{\unitlength}{0.3cm}
\begin{picture}(8,8)(-7,-2.2)
\put(-2,0){\line(10,0){10}}
\put(0,-6){\line(0,10){14}}
{\thicklines
\qbezier(2,-2.5)(0,1.5)(0,4)
\qbezier(-3,7)(-1.5,7)(-.2,7)
}
\put(8.5,-0.2){$g$}
\put(-2,9.5){$S^\mathrm{que}(\hb,\hh,\bb;g)$}
\put(-2,7.5){$\infty$}
\put(0,4){\circle*{.5}}
\put(0,7){\circle{.5}}
\put(-3,-.2){$\th$}
\end{picture}  
\vspace{1.2cm}
\begin{center}
\qquad (1) $\hat h_c^{\rm ann}(\frac{\hb}{\a})<\hh < h^\mathrm{que}_c(\hb,\bb,\th)$
\end{center}
\end{minipage}
\begin{minipage}[hbtb]{4.3cm}
\centering
\setlength{\unitlength}{0.3cm}
\begin{picture}(8,8)(-7,-2.2)
\put(-2,0){\line(8,0){10.5}}
\put(0,-6){\line(0,12){14}}
{\thicklines
\qbezier(2.5,-4)(1,-2.5)(0,0)
\qbezier(-3,7)(-1.5,7)(-.2,7)
}
\put(9,-0.2){$g$}
\put(-2,9.5){$S^\mathrm{que}(\hb,\hh,\bb;g)$}
\put(-2,7.5){$\infty$}
\put(0,0){\circle*{.5}}
\put(0,7){\circle{.5}}
\put(-3,-.2){$\th$}
\end{picture}
\vspace{1.2cm}
\begin{flushright}
\qquad (2) $\hh = h^\mathrm{que}_c(\hb,\bb,\th)$
\end{flushright}
\end{minipage}
\begin{minipage}[hbt]{4.3cm}
\centering
\setlength{\unitlength}{0.3cm}
\begin{picture}(8,8)(-7,-2.2)
\put(-2,0){\line(8,0){10}}
\put(0,-6){\line(0,14){14}}
{\thicklines
\qbezier(2.5,-6.5)(1,-5.5)(0,-2)
\qbezier(-3,7)(-1.5,7)(-.2,7)
}
\put(8.5,-0.2){$g$}
\put(-2,9.5){$S^\mathrm{que}(\hb,\hh,\bb;g)$}
\put(-2,7.5){$\infty$}
\put(0,-2){\circle*{.5}}
\put(0,7){\circle{.5}}
\put(-3,-.2){$\th$}
\end{picture}
\vspace{1.2cm}
\begin{flushright}
\qquad (3)  $\hh > h^\mathrm{que}_c(\hb,\bb,\th)$
\end{flushright}
\end{minipage}
\vspace{-.4cm}
\begin{center}
\caption{\small Qualitative picture of the map $g \mapsto S^\mathrm{que}(\hb,\hh,\bb;g)$ 
for $\hb,\hh>0$,  $\bb\geq0$. The horizontal axis is located at the  point $\bar h\leq  
S^\mathrm{que}(\hb,\hh,\bb;0)$ on 
the vertical axis. The map $g\mapsto S^\mathrm{que}(\hb,\hh,\bb;g)$ is strictly decreasing when finite.  For 
$\hh = h^\mathrm{que}_c(\hb,\bb,\th)$ and $\th> S^\mathrm{que}(\hb,\hh,\bb;0),$  
$S^\mathrm{que}(\hb,\hh,\bb;g)$ jumps at $g=0$ from a value below $\th$ to 
infinity. Since the map $g\mapsto S^\mathrm{que}(\hb,\hh,\bb;g)$ is lower semi-continuous 
and decreasing,  it follows that $\lim_{g\downarrow 0}S^\mathrm{que}(\hb,\hh,\bb;g)=S^\mathrm{que}(\hb,\hh,\bb;0)$, 
which is finite if $\hh>\hat h_c^{\rm ann}(\frac{\hb}{\a})$ and infinite if 
$\hh<\hat h_c^{\rm ann}(\frac{\hb}{\a})$ (see Lemma 5.2).  
For $\hh<\hat h_c^{\rm ann}(\frac{\hb}{\a})$, $g \mapsto S^\mathrm{que}(\hb,\hh,\bb;g)$ has the shape 
as in (1) but  tends to infinity as $g\downarrow0$ . For $\hh=\hat h_c^{\rm ann}(\frac{\hb}{\a})$ the picture 
is the same as for $\hh<\hat h_c^{\rm ann}(\frac{\hb}{\a})$ if $s^\ast(\hb,\hh,\alpha)=\infty$,  and takes the 
shape in (1) if $s^\ast(\hb,\hh,\alpha)<\infty$.}
\label{fig-varfe}
\end{center}
\vspace{-.5cm}
\end{figure}

Analogues of Theorem~\ref{varfloc} also hold for the copolymer model and the pinning 
model. The copolymer analogue is obtained by putting $\bb=\th=0$, which leads to
analogous variational formulas for $\hat S^{\rm que}(\hb,\hh;g)$ and $\hat g^{\rm que}
(\hb,\hh)$. In the variational formula for $\hat S^{\rm que}(\hb,\hh;g)$ we replace 
$\cC^{\rm fin}\cap\cR$ by $\hat\cC^{\rm fin}\cap\hat\cR$ in \eqref{Sdef}. This 
replacement is a consequence of the contraction principle in \eqref{conprin}. Although 
the contraction principle holds on $\cP^{\rm inv}(\wE^\N)$, it turns out that the $Q\notin
\cC^{\rm fin}\cap\cR$ play no role in \eqref{Sdef}. Similarly, Theorem~\ref{varfloc} 
reduces to the pinning model upon putting $\hb=\hh=0$. The variational formula for 
$\bar S^{\rm que}(\bb;g)$ is the same as that in \eqref{Sdef}, with  $\cC^{\rm fin}
\cap\cR$ replaced by $\bar\cC^{\rm fin}\cap\bar\cR$.


\subsection{Annealed excess free energy}
\label{S3.2}

We next present the variational formula for the annealed excess free energy. This will 
serve as an \emph{object of comparison} in our study of the quenched model. Define
\begin{equation}
\label{cndef}
\cN(\hb, \hh,\bb;g) = \tfrac12e^{\tM(-\bb)}\left
(\sum_{n\in\N} \rho(n)\,e^{-ng}+\sum_{n\in\N}\rho(n)\,
e^{-n\left(g-[\hM(2\,\hb)-2\,\hb\hh]\right)}\right)
\end{equation} 
(recall \eqref{mgffin}).

\begin{theorem}
\label{varflocann}  
Assume {\rm (\ref{rhocond})} and {\rm (\ref{mgffin})}. Fix $\hb,\hh,\bb\geq0$ 
and $\th\in\R$.\\
(i) The annealed excess free energy is given by
\begin{equation}
\label{ganvarexp}
g^\mathrm{ann}(\hb,\hh,\bb,\th) 
= \inf\left\{g\in\R\colon\,S^\mathrm{ann}(\hb,\hh,\bb;g)-\th<0\right\},
\end{equation}
where
\begin{equation}
\label{Sandef}
S^\mathrm{ann}(\hb,\hh,\bb;g) 
= \sup_{Q\in\cC^\mathrm{fin}}
\left[\bb \Phi(Q)+\Phi_{\hb, \hh}(Q)-g m_Q-I^\mathrm{ann}(Q)\right].
\end{equation}
(ii) The map $g \mapsto S^\mathrm{ann}(\hb,\hh,\bb;g)$ is lower semi-continuous, 
convex and non-increasing on $\R$. Furthermore, it is infinite on $(-\infty,
\hat{g}^{\rm ann}(\hb,\hh))$, and finite, continuous and strictly decreasing 
on $[\hat{g}^{\rm ann}(\hb,\hh),\infty)$ (recall \eqref{copannfeg}). 
\end{theorem}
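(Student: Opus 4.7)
The proof mirrors the structure of the quenched case (Theorem~\ref{varfloc}) but is substantially simpler because the annealed expectation over $\o$ factorizes across excursions. The plan is to (1) compute the annealed partition sum explicitly, (2) identify $S^{\rm ann}(\hb,\hh,\bb;g)$ with the resulting expression via Varadhan's lemma applied to Theorem~\ref{aLDP}, and (3) read off the regularity properties in part~(ii) from the explicit form.

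\textbf{Step 1.} First I would repeat the excursion decomposition \eqref{copadHamexcsplit}--\eqref{FNdef} and take the expectation over $\o$. By independence of $\ho$ and $\bo$ and of distinct excursions, each excursion of length $\tau$ contributes the factor
\begin{equation*}
\E\bigl[e^{\bb\bo_1}\bigr]\,\E\bigl[\phi_{\hb,\hh}(\ho_I)\mid\tau\bigr]=e^{\tM(-\bb)}\,\tfrac12\bigl(1+e^{\tau[\hM(2\hb)-2\hb\hh]}\bigr),
\end{equation*}
so that summing over $N$-tuples of excursion lengths gives
\begin{equation*}
\sum_{n\in\N} e^{-gn}\,\E\bigl(Z^{\hb,\hh,\bb,\th,\o}_n\bigr)=\sum_{N\in\N}\bigl[\cN(\hb,\hh,\bb;g)\,e^{-\th}\bigr]^N,
\end{equation*}
with $\cN(\hb,\hh,\bb;g)$ as in \eqref{cndef}. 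Since the left-hand side is a power series with radius of convergence $g^{\rm ann}(\hb,\hh,\bb,\th)$ and the right-hand side is geometric, \eqref{ganvarexp} reduces to proving $S^{\rm ann}(\hb,\hh,\bb;g)=\log\cN(\hb,\hh,\bb;g)$.

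\textbf{Step 2.} To establish this identification I would apply Varadhan's lemma to Theorem~\ref{aLDP} with test functional $F(Q)=\bb\Phi(Q)+\Phi_{\hb,\hh}(Q)$. Using \eqref{Iannz},
\begin{equation*}
\lim_{N\to\infty}\tfrac{1}{N}\log\E\bigl[E^*_g\bigl(e^{NF(R_N^\o)}\bigr)\bigr]=\sup_Q\bigl[F(Q)-I^{\rm ann}_g(Q)\bigr]=S^{\rm ann}(\hb,\hh,\bb;g)-\log\cN(g).
\end{equation*}
On the other hand, combining the identity $F_N^{\hb,\hh,\bb,\th,\o}(g)=(\cN(g)e^{-\th})^N E^*_g[e^{NF(R_N^\o)}]$ from \eqref{FNdef} with the exact calculation of Step~1 shows that the same limit equals $\log\cN(\hb,\hh,\bb;g)-\log\cN(g)$, giving the sought identity.

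\textbf{Step 3 and main obstacle.} For part~(ii) I would analyze $\log\cN$ directly: rewriting
\begin{equation*}
\cN(\hb,\hh,\bb;g)=\tfrac12 e^{\tM(-\bb)}\sum_{n\in\N}\rho(n)\bigl[1+e^{n[\hM(2\hb)-2\hb\hh]}\bigr]e^{-gn}
\end{equation*}
exhibits $\log\cN(\hb,\hh,\bb;\cdot)$ as (a constant plus) a log-moment-generating function, hence convex and strictly decreasing in $g$. The series is finite iff $g\geq 0$ and $g\geq\hM(2\hb)-2\hb\hh$, i.e.\ iff $g\geq\hat g^{\rm ann}(\hb,\hh)$ by \eqref{copannfeg}; continuity at the left endpoint is automatic since $\rho$ is a probability measure, and lower semi-continuity on $\R$ follows from monotone convergence. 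The principal technical point is verifying the exponential tail condition required by Varadhan's lemma in Step~2, as $F$ is unbounded; this can be handled using the finiteness of all exponential moments of $\hm$ and $\bm$ from \eqref{mgffin} together with the polynomial tail of $\rho$, by estimates of the kind developed in Appendix~\ref{appB} for the quenched case.
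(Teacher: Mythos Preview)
Your Step~1 is correct and coincides with the paper's opening computation. The difference lies in Step~2, where you propose to identify $S^{\rm ann}(\hb,\hh,\bb;g)$ with $\log\cN(\hb,\hh,\bb;g)$ via Varadhan's lemma applied to the annealed LDP. The paper takes a different and more elementary route: since $\bb\Phi(Q)+\Phi_{\hb,\hh}(Q)$ depends only on the first-word marginal $\pi_1Q$, and since $\inf\{H(Q\mid q_{\rho,\nu}^{\otimes\N}):\pi_1Q=q\}=h(q\mid q_{\rho,\nu})$, the variational formula \eqref{Sandef} collapses to a one-word problem, which is then solved \emph{exactly} by the Gibbs variational principle (relative-entropy/log-Laplace duality). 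This avoids all semi-continuity and tail-estimate issues and, crucially, produces the explicit unique maximizer $q_{\hb,\hh,\bb;g}$ in \eqref{uniqmax}; that maximizer is indispensable later (Lemma~\ref{auxlemgap} and hence Corollary~\ref{freeeneggap}(iii)), and your Varadhan route does not deliver it.

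Your approach is viable but has two soft spots you should be aware of. First, the exponential tail condition genuinely fails at the boundary $g=\hat g^{\rm ann}(\hb,\hh)$ when $(\hb,\hh)\in\hat\cL^{\rm ann}$: for any $\gamma>1$ one has $\hM(2\gamma\hb)-2\gamma\hb\hh>\hM(2\hb)-2\hb\hh=g$ (by convexity of $\hM$ and $\hh<\hM(2\hb)/(2\hb)$), so the naive moment bound diverges. You would therefore need the full truncation machinery of Appendix~\ref{appB}, not just ``estimates of that kind''. Second, for $g<\hat g^{\rm ann}(\hb,\hh)$ your Step~2 does not establish $S^{\rm ann}=\infty$: knowing that the Laplace limit is $+\infty$ gives only the \emph{lower} Varadhan inequality, which is uninformative here. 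The paper handles this regime separately by evaluating the functional at explicit test measures $Q^L_{\hb}=(q^L_{\hb})^{\otimes\N}$ and letting $L\to\infty$; you would need to add such an argument.
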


\begin{proof}
The proof comes in 3 steps.

\medskip\noindent
Replacing $Z^{\hb, \hh,\bb,\th,\o}_n$ by $ Z^{\hb, \hh,\bb,\th}_n=\E(Z^{\hb, \hh,\bb,
\th,\o}_n)$ in \eqref{copadZFNrel}, we obtain from \eqref{FNdef} that 
\begin{equation}
\label{FNdefann}
F_N^{\hb, \hh,\bb,\th}(g) = \E\left(F_N^{\hb, \hh,\bb,\th,\o}(g)\right)
= \cN(\hb, \hh,\bb;g)^N e^{-\th N}.
\end{equation}
It therefore follows from \eqref{copadSlim} and \eqref{FNdefann} that 
\begin{equation}
\begin{split}
\label{copadsa1}
s^{\rm ann}(\hb,\hh,\bb;g)-\th
&=\limsup_{N\rightarrow\infty}\frac1N\log F_N^{\hb, \hh,\bb,\th}(g)
=\log \cN(\hb, \hh,\bb;g)-\th,
\end{split}
\end{equation}
where 
\begin{equation}
\begin{split}\label{copadsdef}
s^{\rm ann}(\hb,\hh,\bb;g)
=\limsup_{N\rightarrow\infty}\frac1N\log \left(e^{N\th}F_N^{\hb, \hh,\bb,\th}(g)\right)
=\log \cN(\hb, \hh,\bb;g).
\end{split}
\end{equation}
Note from \eqref{cndef} and \eqref{copadsdef} that the map $g\mapsto s^{\rm ann}(\hb,\hh,
\bb;g)$ is non-increasing. Moreover, for any $\hb,\hh,\bb\geq 0$ and $\th\in\R$, we see
from \eqref{copadZFNrel} after replacing $ Z^{\hb, \hh,\bb,\th,\o}_n$ by $ Z^{\hb,\hh,\bb,
\th}_n$ that $g^{\rm ann}(\hb,\hh,\bb,\th)$ is the smallest $g$-value at which $s^{\rm ann}
(\hb,\hh,\bb;g)-\th$ changes sign, i.e.,
\begin{equation}
g^{\rm ann}(\hb,\hh,\bb,\th)
=\inf\left\{g\in\R\colon s^{\rm ann}(\hb,\hh,\bb;g)-\th<0 \right\}.
\end{equation}
The proof of (i) and (ii) will follow once we show that
\begin{equation}
\label{anneqvfm}
S^{\rm ann}(\hb,\hh,\bb;g)=s^{\rm ann}(\hb,\hh,\bb;g) \quad \forall\,g\in\R,
\end{equation}
since \eqref{cndef}, \eqref{copadsdef} and \eqref{anneqvfm} show that the map $g\mapsto 
S^{\rm ann}(\hb,\hh,\bb;g)$ is infinite whenever $g<\hat{g}^{\rm\, ann}(\hb,\hh) =
0\vee[M(2\hb)-2\hb \hh]$, and is finite otherwise. Lower semi-continuity and convexity 
of this map follow from \eqref{Sandef}, because the function under the supremum is 
linear and finite in $g$, while convexity and finiteness imply continuity. The proof 
of \eqref{anneqvfm} follows from the arguments in \cite{BodHoOp11}, Theorem 3.2, as we
show in steps 2--3. 

\medskip\noindent
{\bf 2.} 
For the case $g<\hat{g}^{\rm ann}(\hb,\hh)$, note from \eqref{cndef} that $\cN(\hb,
\hh,\bb;g)=\infty$ for all $\hb,\hh,\bb\geq0$ and $\th\in\R$. To show that $S^{\rm ann}
(\hb,\hh,\bb;g)=\infty$ for this case, we proceed as in steps (II) and (III) of the proof
of \cite{BodHoOp11}, Theorem 3.2, by evaluating the functional under the supremum 
in \eqref{Sandef} at $Q^L_{\hb}=(q_{\hb}^L)^{\otimes\N}$ with
\begin{equation}
q_{\hb}^L(d(\ho_1,\bo_1),\ldots,d(\ho_n,\bo_n))
=\delta_{Ln}\left[\hm_{\hb}(d\ho_1)\times\cdots\times\hm_{\hb}(d\ho_n)\right]
\times\left[\bm(d\bo_1)\times\cdots\times\bm(d\bo_n)\right],
\end{equation}
where $L,n\in\N$, $\ho_1,\ldots,\ho_n\in\hE$, $\bo_1,\ldots,\bo_n\in\bE$, and (recall 
\eqref{mgffin})
\begin{equation}
\hm_{\hb}(d\ho_1)=e^{-2\hb\ho_1- \hM(2\hb)}\hm(d\ho_1).
\end{equation}
Note from \eqref{pinPhidef} that $\Phi(Q^L_{\hb})=0$ because $\bm$ has zero mean. This 
leads to a lower bound on $S^{\rm ann}(\hb,\hh,\bb;g)$ that tends to infinity as $L\to
\infty$. To get the desired lower bound, we have to distinguish between the cases 
$\hat g^{\rm ann}(\hb,\hh)=0$ and $\hat g^{\rm ann}(\hb,\hh)>0$. For $\hat g^{\rm ann}
(\hb,\hh)=0$ use $Q_0^L$, for $\hat g^{\rm ann}(\hb,\hh)>0$ with $\hb>0$ use $Q_{\hb}^L$.

\medskip\noindent
{\bf 3.}
For the case $g\geq\hat{g}^{\rm ann}(\hb,\hh)$, we proceed as in step 1 and 2 of the 
proof of Theorem 3.2 of \cite{BodHoOp11}. Note that $\Phi_{\hb,\hh}(Q)$ and $\Phi(Q)$ 
defined in (\ref{pinPhidef}--\ref{phidef}) are functionals of $\pi_1 Q$, where $\pi_1 Q$ 
is the first-word marginal of $Q$. Moreover, by (\ref{spentrdef}),
\begin{equation}
\label{Hiidred}
\inf_{ {Q\in\cP^{\mathrm{inv}}(\widetilde{E}^\N)} \atop {\pi_1 Q=q} } 
H(Q \mid q_{\rho,\hm\otimes\bm}^{\otimes\N}) 
= h(q \mid q_{\rho,\hm\otimes\bm}) \qquad \forall\,q \in \cP(\widetilde{E})
\end{equation}  
with the infimum \emph{uniquely} attained at $Q = q^{\otimes\N}$, where the right-hand 
side denotes the relative entropy of $q$ w.r.t.\ $q_{\rho,\hm\otimes\bm}$. (The uniqueness 
of the minimum is easily deduced from the strict convexity of relative entropy on 
finite cylinders.) Consequently, the variational formula in \eqref{Sandef} becomes
\begin{equation}
\label{varredann}
\begin{split}
S^\mathrm{ann}(\hb,\hh,\bb;g)
&= \sup_{ {q \in \cP(\widetilde{E})} \atop 
{m_q<\infty,\,h(q \mid q_{\rho,\hm\otimes\bm})<\infty} } 
\Big\{\int_{\widetilde{E}} q(d\o)\,[\bb \bo_1+\log\phi_{\hb, \hh}(\ho)] 
-gm_Q -h(q \mid q_{\rho,\hm\otimes\bm})\Big\}\cr
&=\sup_{ {q \in \cP(\widetilde{E})} \atop 
{m_q<\infty,\,h(q \mid q_{\rho,\hm\otimes\bm})<\infty} } 
\Big\{ \int_{\widetilde{E}} q(d\o)\,[\bb \bo_1+\log\phi_{\hb, \hh}(\ho)-g\tau(\o)]\\ 
&\qquad\qquad\qquad\qquad\qquad - \int_{\widetilde{E}} q(d\o) 
\log \left(\frac{q(d\o)}{q_{\rho,\hm\otimes\bm}(d\o)}\right)\Big\} \cr
&=\bar M(-\bb)+ \log\hat\cN(\hb, \hh;g)-\inf_{ {q \in \cP(\widetilde{E})} 
\atop {m_q<\infty,\,h(q \mid q_{\rho,\hm\otimes\bm})<\infty} } 
h(q \mid q_{\hb, \hh,\bb;g}),
\end{split}
\end{equation}
where (by an abuse of notation) $\o=((\ho_i,\bo_i))_{i=1}^{\tau(\o)}$ is the disorder in 
the first word, $\phi_{\hb, \hh}(\ho)$ is defined in (\ref{phidef}), $m_q=\int_{\widetilde
{E}} q(d\o)\tau(\o)$, $\tau(\o)$ is the length of the word $\o$, and
\begin{equation}
\label{uniqmax}
\begin{split}
q_{\hb, \hh,\bb;g}(d(\ho_1,\bo_1),\cdots,d(\ho_n,\bo_n))
&=\frac{\rho(n)\phi_{\hb, \hh}(\ho)e^{\bb \bo_1-ng}}
{\hat\cN(\hb, \hh;g)e^{\bar M(-\bb)}}(\hm\otimes\bm)^n
(d(\ho_1,\bo_1),\cdots,d(\ho_n,\bo_n)),\cr
\hat\cN(\hb, \hh;g)
&=\tfrac12\left[\sum_{n\in\N}\rho(n)e^{-ng}
+\sum_{n\in\N}\rho(n)e^{-n(g-[M(2\hb)-2\hb \hh])}\right].
\end{split}
\end{equation}
Note from \eqref{cndef} that $\cN(\hb, \hh,\bb;g)=\hat\cN(\hb, \hh;g)e^{\bar M(-\bb)}$.
The infimum in the last equality of \eqref{varredann} is uniquely attained at $q=q_{\hb,
\hh,\bb;g}$. Therefore the variational problem in \eqref{Sandef} for $g\geq\hat{g}^{\rm
ann}(\hb,\hh)$ takes the form
\begin{equation}
\label{scopadf}
\begin{split}
S^{\rm ann}(\hb,\hh,\bb;g)
&=\log\left(\tfrac12\left(\sum_{n\in\N}\rho(n)
e^{-gn}+\sum_{n\in\N}\rho(n)e^{-n\left(g-[\hM(2\,\hb)-2\,\hb\hh]\right)}\right) 
e^{\bar M(-\bb)}\right)\cr
&=\log \cN(\hb, \hh,\bb;g) = s^{\rm ann}(\hb,\hh,\bb;g).
\end{split}
\end{equation}
The last formula proves \eqref{ganncomb}. 
\end{proof}

As in the quenched model, there are analogous versions of Theorem~\ref{varflocann} 
for the annealed copolymer model and the annealed pinning model. These are obtained 
by putting either $\bb=\th=0$ or $\hb=\hh=0$, replacing $\cC^\mathrm{fin}$ 
by $\hat\cC^\mathrm{fin}$ and $\bar\cC^\mathrm{fin}$, respectively. The copolymer 
version of Theorem~\ref{varflocann} was derived in \cite{BodHoOp11}, Theorem 3.2, 
and the pinning version (for $g=0$ only) in \cite{ChdHo10}, Theorem 1.3. 

Putting $\bb=\th=0$, we get the copolymer analogue of \eqref{scopadf}:
\begin{equation}
\label{Scopann}
\hat S^{\rm ann}(\hb,\hh;g)
=\log\left(\tfrac12\left[\sum_{n\in\N}\rho(n)\,e^{-n g}
+ \sum_{n\in\N}\rho(n)\,e^{-n(g-[\hM(2\,\hb)-2\,\hb\,\hh])}\right]\right).
\end{equation}
This expression, which was obtained in \cite{BodHoOp11}, is plotted in 
Fig.~\ref{fig-varfe1}. Putting $\hb=\hh=0$, we get the pinning analogue:
\begin{equation}
\label{Sanpin}
\begin{split}
\bar S^{\rm ann}(\bb;g)
&= \bar M(-\bb)+\log\left(\sum_{n\in\N} \rho(n)\,e^{-ng}\right).
\end{split}
\end{equation}

\begin{figure}[htbp]
\vspace{2cm}
\begin{minipage}[hbt]{5cm}
\centering
\setlength{\unitlength}{0.3cm}
\begin{picture}(8,8)(-7,-2.2)
\put(0,0){\line(8,0){8}}
\put(0,-4){\line(0,12){12}}
{\thicklines
\qbezier(3,-5)(2,-3.5)(1,-1)
\qbezier(-3,7)(-1.5,7)(.8,7)
}
\qbezier[40](1,-1)(1,-3)(1,7)
\put(8.3,-0.2){$g$}
\put(-3,9){$\hat S^\mathrm{ann}(\hb, \hh;g)$}
\put(-2,7.5){$\infty$}
\put(1,-1){\circle*{.5}}
\put(-1,-0.2){$0$}
\put(1,7){\circle{.5}}
\end{picture}
\vspace{1.2cm}
\begin{center}
{\qquad (1)  $\hh <\hh^\mathrm{ann}_c(\hb)$}
\end{center}
\end{minipage}
\begin{minipage}[hbtb]{5cm}
\centering
\setlength{\unitlength}{0.3cm}
\begin{picture}(8,8)(-7,-2.2)
\put(0,0){\line(8,0){8.5}}
\put(0,-4){\line(0,12){12}}
{\thicklines
\qbezier(3,-4.5)(1,-2.5)(0,0)
\qbezier(-3,7)(-1.5,7)(-.2,7)
}
\qbezier[40](4,0)(4,0)(0,0)
\qbezier[40](4,0)(4,0)(0,0)  
\put(8.7,-0.2){$g$}
\put(-3,9){$ \hat S^\mathrm{ann}(\hb, \hh;g)$}
\put(-2,7.5){$\infty$}
\put(0,0){\circle*{.5}}
\put(-1,-0.2){$0$}
\put(0,7){\circle{.5}}
\end{picture}
\vspace{1.3cm}
\begin{center}
\qquad (2)  $\hh = \hh^\mathrm{ann}_c(\hb)$
\end{center}
\end{minipage}
\begin{minipage}[hbt]{5cm}
\centering
\setlength{\unitlength}{0.3cm}
\begin{picture}(8,8)(-7,-2.2)
\put(0,0){\line(8,0){8}}
\put(0,-4){\line(0,12){12}}
{\thicklines
\qbezier(4,-5)(1,-3.5)(0,-1)
\qbezier(-3,7)(-1.5,7)(-.2,7)
}
\put(8.3,-0.2){$g$}
\put(-3,9){$\hat  S^\mathrm{ann}(\hb, \hh;g)$}
\put(-2,7.5){$\infty$}
\put(0,-1){\circle*{.5}}
\put(-1,-0.2){$0$}
\put(0,7){\circle{.5}}
\end{picture}
\vspace{1.2cm}
\begin{center}
\qquad (3)  $\hh >  \hh^\mathrm{ann}_c(\hb)$
\end{center}
\end{minipage}
\vspace{-.4cm}
\begin{center}
\caption{\small Qualitative picture of the map $g \mapsto \hat S^\mathrm{ann}(\hb,\hh;g)$ 
for $\hb,\hh\geq0$. 
Compare with Fig.~\ref{fig-varfe}.}
\label{fig-varfe1}
\end{center}
\vspace{-.6cm}
\end{figure}

The map $g \mapsto S^\mathrm{ann}(\hb,\hh,\bb;g)$ has the same qualitative picture 
as in Fig.~\ref{fig-varfe1}, with the following changes: the horizontal axis is 
located at $\th$ instead of zero, and $\hh_c^{\rm ann}(\hb)$ is replaced by 
$h_c^{\rm ann}(\hb,\bb,\th)$. 

Subtracting $\th$ from \eqref{Scopann} and \eqref{Sanpin}, we get from \eqref{ganvarexp} 
that the excess free energies $\hat g^{\rm ann}(\hb,\hh)$ and $\bar g^{\rm ann}(\bb,\th)$
take the form given in \eqref{copannfeg} and \eqref{pinannfeg}, respectively. The 
following lemma summarizes their relationship.

\begin{lemma}
\label{grela}
For every $\bb,\hh,\hb\geq 0$ and $\th\in\R$ (recall \eqref{lann12})
\begin{equation}
\label{anneferel}
\begin{split}
g^{\rm ann}(\hb,\hh,\bb,\th)\left\{\begin{array}{ll}
=\hat g^{\rm ann}(\hb,\hh), &\mbox{ if }\quad \th\geq \th_\ast(\hb, \hh,\bb),\\
> \hat g^{\rm ann}(\hb,\hh), &\mbox{ if } \quad \th< \th_\ast(\hb, \hh,\bb),\\
\leq\bar g^{\rm ann}(\bb,\th), &\mbox{ if } \quad \hh>\hat h_c^{\rm ann}(\hb),\\
=\bar g^{\rm ann}(\bb,\th), &\mbox{ if } \quad \hh=\hat h_c^{\rm ann}(\hb),\\
\geq\bar g^{\rm ann}(\bb,\th), &\mbox{ if } \quad \hh<\hat h_c^{\rm ann}(\hb).
\end{array}
\right.
\end{split}
\end{equation}
\end{lemma}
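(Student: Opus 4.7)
The plan is to derive all five claims directly from the explicit closed-form expression for $S^{\rm ann}(\hb,\hh,\bb;g)$ established in \eqref{scopadf}, together with the characterizations of $\hat g^{\rm ann}(\hb,\hh)$ and $\bar g^{\rm ann}(\bb,\th)$ in \eqref{copannfeg} and \eqref{pinannfeg}. Throughout, we use that $g\mapsto \cN(g)$ is strictly decreasing on $[0,\infty)$ and that by Theorem~\ref{varflocann}(ii) the map $g\mapsto S^{\rm ann}(\hb,\hh,\bb;g)$ is $+\infty$ on $(-\infty,\hat g^{\rm ann}(\hb,\hh))$ and finite, continuous, strictly decreasing on $[\hat g^{\rm ann}(\hb,\hh),\infty)$.

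For the first two assertions, I would evaluate $S^{\rm ann}(\hb,\hh,\bb;\cdot)$ at $g=\hat g^{\rm ann}(\hb,\hh)$. Splitting into the cases $\hat M(2\hb)\geq 2\hb\hh$ and $\hat M(2\hb)<2\hb\hh$, in both cases the two arguments of $\cN$ inside the logarithm in \eqref{scopadf} turn out to be $0$ and $|\hat M(2\hb)-2\hb\hh|$ (in some order), using $\cN(0)=1$. Substituting into the explicit formula yields
\[
S^{\rm ann}\bigl(\hb,\hh,\bb;\hat g^{\rm ann}(\hb,\hh)\bigr)
=\bar M(-\bb)+\log\!\left(\tfrac12\bigl[1+\cN(|\hat M(2\hb)-2\hb\hh|)\bigr]\right)
=\bar h_\ast(\hb,\hh,\bb).
\]
Combining this with the fact that $S^{\rm ann}(\hb,\hh,\bb;g)=+\infty$ for $g<\hat g^{\rm ann}(\hb,\hh)$ and strictly decreasing above gives, via \eqref{ganvarexp}, the equivalence $g^{\rm ann}(\hb,\hh,\bb,\th)=\hat g^{\rm ann}(\hb,\hh)$ iff $\th\geq \bar h_\ast(\hb,\hh,\bb)$, and strict inequality otherwise.

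For the three comparisons with $\bar g^{\rm ann}(\bb,\th)$, the key observation is that \eqref{scopadf} can be rewritten as
\[
S^{\rm ann}(\hb,\hh,\bb;g)
=\bar M(-\bb)+\log\!\left(\tfrac12\bigl[\cN(g)+\cN(g-[\hat M(2\hb)-2\hb\hh])\bigr]\right),
\]
while $\bar S^{\rm ann}(\bb;g)=\bar M(-\bb)+\log\cN(g)$ by \eqref{Sanpin}. Since $\cN$ is strictly decreasing, the sign of $\hat M(2\hb)-2\hb\hh$ (equivalently the sign of $\hh-\hat h_c^{\rm ann}(\hb)$, recall \eqref{copannfeg}) determines whether the average of the two $\cN$-values in the display exceeds, equals, or falls short of $\cN(g)$. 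This gives, for every $g\geq 0$,
\[
S^{\rm ann}(\hb,\hh,\bb;g)\;\;\substack{<\\ =\\ >}\;\;\bar S^{\rm ann}(\bb;g)
\quad\text{according as}\quad
\hh\;\;\substack{>\\ =\\ <}\;\;\hat h_c^{\rm ann}(\hb).
\]
Applying \eqref{ganvarexp} together with the definition of $\bar g^{\rm ann}(\bb,\th)$ as the smallest $g$ at which $\bar S^{\rm ann}(\bb;g)-\th$ crosses zero, the three cases follow: for $\hh=\hat h_c^{\rm ann}(\hb)$ the two variational problems coincide; for $\hh>\hat h_c^{\rm ann}(\hb)$ the function $S^{\rm ann}(\hb,\hh,\bb;\cdot)$ crosses $\th$ no later than $\bar S^{\rm ann}(\bb;\cdot)$ does, hence $g^{\rm ann}\leq \bar g^{\rm ann}$; and symmetrically $g^{\rm ann}\geq \bar g^{\rm ann}$ in the opposite case.

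The arguments are essentially routine given the closed form for $S^{\rm ann}$. The only mildly delicate point, which I would handle carefully, is treating the boundary values $\hat g^{\rm ann}(\hb,\hh)=0$ and $\bar g^{\rm ann}(\bb,\th)=0$: at $g=0$ the infimum in \eqref{ganvarexp} may be attained with $S^{\rm ann}-\th$ strictly negative rather than zero, so the comparisons must be stated as inequalities that are still preserved by taking infima. Lower semi-continuity of $g\mapsto S^{\rm ann}(\hb,\hh,\bb;g)$ from Theorem~\ref{varflocann}(ii) is what makes this go through cleanly.
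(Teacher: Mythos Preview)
Your proposal is correct and follows essentially the same approach as the paper: both proofs use the closed form \eqref{scopadf} for $S^{\rm ann}$, compare it with $\hat S^{\rm ann}$ (via the shift by $\bar M(-\bb)-\th$) for the first two claims, and compare it with $\bar S^{\rm ann}$ in \eqref{Sanpin} for the last three. Your explicit computation showing $S^{\rm ann}(\hb,\hh,\bb;\hat g^{\rm ann}(\hb,\hh))=\bar h_\ast(\hb,\hh,\bb)$ is slightly more detailed than the paper's reference to Fig.~\ref{fig-varfe1}, but the underlying argument is the same.
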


\begin{proof}
Note from \eqref{Sandef} and (\ref{scopadf}--\ref{Scopann}) that $S^{\rm ann}(\hb,\hh,
\bb;g)-\th$ is $\hat S^{\rm ann}(\hb,\hh;g)$ shifted by $\bar M(-\bb)-\th$.  We see 
from Fig.~\ref{fig-varfe1} that if $\th\geq \th_\ast(\hb,\hh,\bb)$, then the map 
$g\mapsto S^{\rm ann}(\hb,\hh,\bb;g)-\th$ changes sign at the same value of $g$ as 
the map $g\mapsto\hat S^{\rm ann}(\hb,\hh;g)$ does. Hence $g^{\rm ann}(\hb,\hh,\bb,\th)
=\hat g^{\rm ann}(\hb,\hh)$ whenever $\th\geq\th_\ast(\hb,\hh,\bb)$. On the other 
hand, if $\th<\th_\ast(\hb, \hh,\bb)$, then the map $g\mapsto\hat S^{\rm ann}(\hb,\hh;g)$
changes sign before the map $g\mapsto S^{\rm ann}(\hb,\hh,\bb;g)-\th$ does, i.e., 
$S^{\rm ann}(\hb,\hh,\bb;\hat g^{\rm ann}(\hb,\hh))-\th>0$, and hence $g^{\rm ann}
(\hb,\hh,\bb,\th)>\hat g^{\rm ann}(\hb,\hh)$.
 
The rest of the proof follows from a comparison of \eqref{scopadf} and \eqref{Sanpin}.
Note that, for $\hh>\hat h_c^{\rm ann}(\hb)$, we have $S^{\rm ann}(\hb,\hh,\bb;g)-\th
<\bar S^{\rm ann}(\bb;g)-\th$, which implies that $g^{\rm ann}(\hb,\hh,\bb,\th)\leq
\bar g^{\rm ann}(\bb,\th)$. For $\hh=\hat h_c^{\rm ann}(\hb)$, we have $S^{\rm ann}
(\hb,\hh,\bb;g)-\th = \bar S^{\rm ann}(\bb;g)-\th$,  which implies that $g^{\rm ann}
(\hb,\hh,\bb,\th)=\bar g^{\rm ann}(\bb,\th)$. Finally, for $\hh<\hat h_c^{\rm ann}(\hb)$ 
we  have $S^{\rm ann}(\hb,\hh,\bb;g)-\th > \bar S^{\rm ann}(\bb;g)-\th$, which implies
that $g^{\rm ann}(\hb,\hh,\bb,\th)\geq \bar g^{\rm ann}(\bb,\th)$.
\end{proof}

 
\subsection{Proof of Theorem~\ref{freeenegvar}}  
\label{S3.3}

\begin{proof}  
Throughout the proof $\hb>0$, $\bb\geq0$ and $\th\in\R$ are fixed.\\
(i) Use Theorems~\ref{varfloc}(i,iii).\\
(ii) Recall from \eqref{copadcc} and \eqref{gvarexp} that
\begin{equation}
\label{scc}
\begin{split}
h_c^{\rm que}(\hb,\bb,\th)
&=\inf\left\{\hh>0\colon\, g^{\rm que}(\hb,\hh,\bb,\th)=0\right\}\cr
&=\inf\left\{\hh>0\colon\, S^{\rm que}(\hb,\hh,\bb;0)-\th\leq0\right\}.
\end{split}
\end{equation}
Indeed, it follows from \eqref{gvarexp} that $g^{\rm que}(\hb,\hh,\bb,\th)=0$ is equivalent 
to saying that the map $g\mapsto S^{\rm que}(\hb,\hh,\bb;g)-\th$ changes sign at zero. 
This sign change can happen while $S^{\rm que}(\hb,\hh,\bb;0)-\th$ is either zero or 
negative (see Fig.~\ref{fig-varfe}(2--3)). The corresponding expression for $h_c^{\rm ann}
(\hb,\bb,\th)$ is obtained in a similar way.
\end{proof}
 
 
\section{Key lemma and proof of Corollary~\ref{freeeneggap}}
\label{Sec4}

The following lemma will be used in the proof of Corollary~\ref{freeeneggap}.  

\begin{lemma}
\label{auxlemgap}
Fix $\alpha\geq 1$, $\hb,\hh>0$ and $\bb\geq0$. Then, for $g>0$,  
\begin{equation}
\label{squesanninq}
S^{\rm que}(\hb,\hh,\bb;g)<S^{\rm ann}(\hb,\hh,\bb;g)
\left\{\begin{array}{ll}
\text{if } (\hb,\hh)\in \hat\cD^{\rm ann},\\
\text{if } (\hb,\hh)\in \hat\cL^{\rm ann} \text{ and } m_\rho<\infty,\\
\text{if } g\neq \hat g^{\rm ann}(\hb,\hh),(\hb,\hh)\in \hat\cL^{\rm ann}
\text{ and } m_\rho=\infty.
\end{array}
\right.
\end{equation}
\end{lemma}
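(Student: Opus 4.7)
The starting point is the trivial inequality $S^{\rm que}(\hb,\hh,\bb;g)\le S^{\rm ann}(\hb,\hh,\bb;g)$, which follows from the inclusion $\cC^{\rm fin}\cap\cR \subset \cC^{\rm fin}$ in \eqref{Sdef}--\eqref{Sandef}, since both variational formulas maximize the same functional
\begin{equation*}
F_g(Q) = \bb\,\Phi(Q) + \Phi_{\hb,\hh}(Q) - g\,m_Q - I^{\rm ann}(Q).
\end{equation*}
My plan is to follow the strategy used in the copolymer version in \cite{BodHoOp11} and exhibit the strict gap by showing that the unique annealed maximizer falls outside $\cR$.

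First I would invoke the calculation in the proof of Theorem~\ref{varflocann}: for $g \ge \hat g^{\rm ann}(\hb,\hh)$ the annealed sup is attained at the i.i.d.\ measure $Q^\ast_g = q_{\hb,\hh,\bb;g}^{\otimes\N}$, with $q_{\hb,\hh,\bb;g}$ the single-word kernel in \eqref{uniqmax}. Uniqueness is provided by the reduction \eqref{Hiidred} and the strict convexity of relative entropy on finite cylinders. I would then explicitly compute the randomized-origin letter process $\Psi_{Q^\ast_g}$ by averaging the letters across a word drawn from $q_{\hb,\hh,\bb;g}$, and show that in each of the three regimes listed in the lemma $\Psi_{Q^\ast_g}\ne\nu^{\otimes\N}$, equivalently $H(\Psi_{Q^\ast_g}\mid\nu^{\otimes\N})>0$, which by definition of $\cR$ means $Q^\ast_g\notin\cR$. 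The three regimes encode three distinct reasons why the copolymer tilt $\phi_{\hb,\hh}$ and the pinning tilt $e^{\bb\bo_1}$ genuinely perturb the letter statistics rather than degenerating into the identity.

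Next I would upgrade ``$Q^\ast_g\notin\cR$'' into $S^{\rm que}<S^{\rm ann}$. The key identity is \eqref{eqnratefctexplicitalg}, which on $\cC^{\rm fin}$ reads
\begin{equation*}
I^{\rm que}(Q) = I^{\rm ann}(Q) + (\alpha-1)\,m_Q\,H\bigl(\Psi_Q \mid \nu^{\otimes\N}\bigr).
\end{equation*}
Combined with \eqref{Sdefalt} (and its shifted $g>0$ analogue via Lemma~\ref{rhozre} and \eqref{Iannz}), this exhibits the quenched formula as the annealed one with an extra non-negative penalty proportional to $H(\Psi_Q\mid\nu^{\otimes\N})$, a penalty that vanishes precisely on $\cR$. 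To convert this into a strict inequality of suprema I would argue by contradiction: a maximizing sequence $Q_n\in\cC^{\rm fin}\cap\cR$ with $F_g(Q_n)\uparrow S^{\rm ann}$ would, thanks to the $-g\,m_Q$ term with $g>0$, have uniformly bounded $m_{Q_n}$ and uniformly bounded $I^{\rm ann}(Q_n)$ (using the control of $\Phi,\Phi_{\hb,\hh}$ afforded by Lemmas~\ref{pinphifin}--\ref{finitephicop}). Hence $\{Q_n\}$ is tight in $\cP^{\rm inv}(\wE^\N)$, and any subsequential weak limit $Q^{\infty}$ satisfies $F_g(Q^\infty) \ge S^{\rm ann}$ by lower semi-continuity of $I^{\rm ann}$ and upper semi-continuity of $\Phi,\Phi_{\hb,\hh}$ on level sets of $I^{\rm ann}$. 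Uniqueness of the annealed maximizer then forces $Q^\infty = Q^\ast_g$, and lower semi-continuity of $H(\Psi_\cdot\mid\nu^{\otimes\N})$ would give $H(\Psi_{Q^\ast_g}\mid\nu^{\otimes\N})=0$, contradicting Step~1.

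The main obstacle will be the semi-continuity/compactness argument of the last step: the functional $Q\mapsto m_Q\,H(\Psi_Q\mid\nu^{\otimes\N})$ is a product of two lower-semi-continuous quantities, and along a maximizing sequence one must exclude the scenario $m_{Q_n}\to\infty$ while $H(\Psi_{Q_n}\mid\nu^{\otimes\N})\to 0$. The hypothesis $g>0$ is exactly what supplies tightness of $m_{Q_n}$ through the $-g m_Q$ term, which is why the lemma is stated only for $g>0$. The three-way case split mirrors three quantitative regimes needed to bound $H(\Psi_{Q^\ast_g}\mid\nu^{\otimes\N})$ from below; the excluded case $g=\hat g^{\rm ann}(\hb,\hh)$ with $m_\rho=\infty$ corresponds to the transition point at which the tilted word-length distribution has infinite mean and the entropy penalty becomes too weak to force a strict gap by this method.
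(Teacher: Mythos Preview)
Your overall architecture matches the paper's: identify the unique annealed maximizer $Q^\ast_g=q_{\hb,\hh,\bb;g}^{\otimes\N}$, observe that it lies outside $\cR$, and use compactness to turn ``unique maximizer not in $\cR$'' into a strict gap. But you have misidentified where the work lies and what the three-case split is for.

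The statement $Q^\ast_g\notin\cR$ is essentially immediate from the explicit density \eqref{uniqmax}: the tilt $\phi_{\hb,\hh}(\ho)\,e^{\bb\bo_1}$ is nontrivial for any $\hb>0$, and the paper spends one line on it. What the paper actually proves in Steps~2--5, and what the three cases in \eqref{squesanninq} encode, is the finiteness
\[
I^{\rm ann}(Q^\ast_g)=h\bigl(q_{\hb,\hh,\bb;g}\mid q_{\rho,\hm\otimes\bm}\bigr)<\infty.
\]
Without this, $Q^\ast_g\notin\cC^{\rm fin}$, the annealed supremum is \emph{not attained}, and your contradiction argument collapses: nothing prevents a maximizing sequence inside $\cC^{\rm fin}\cap\cR$ from approaching $S^{\rm ann}$. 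The dangerous contribution to the entropy is $\sum_{n}n\rho(n)\,e^{-n(g-[\hat M(2\hb)-2\hb\hh])}$, which at the boundary $g=\hat g^{\rm ann}(\hb,\hh)$ reduces to $m_\rho$; this is why the case $g=\hat g^{\rm ann}(\hb,\hh)$, $m_\rho=\infty$ is excluded. Your intuition that there ``the tilted word-length distribution has infinite mean'' is close, but the mechanism is not a weak entropy penalty $H(\Psi_Q\mid\nu^{\otimes\N})$; it is that the candidate maximizer drops out of the domain. Two smaller points: your boundedness claim for $I^{\rm ann}(Q_n)$ along a maximizing sequence does not follow from Lemmas~\ref{pinphifin}--\ref{finitephicop} as stated (their constant $\gamma$ need not be below $1$); the paper instead uses that the level set $\{I^{\rm ann}\le M\}$ is compact and that $\cR$ is closed inside it (Birkner~\cite{Bi08}, Remark~8). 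And you omit the range $0<g<\hat g^{\rm ann}(\hb,\hh)$, where the inequality is trivial because $S^{\rm ann}=\infty$ while $S^{\rm que}<\infty$ by Theorems~\ref{varfloc}(iii) and~\ref{varflocann}(ii).
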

Lemma~\ref{auxlemgap} is proved in Section~\ref{Sec4.2.2}. In Section~\ref{Sec4.2.1} we 
use Lemma~\ref{auxlemgap} to prove Corollary~\ref{freeeneggap}.

 
\subsection{Proof of Corollary~\ref{freeeneggap}}
\label{Sec4.2.1}
 
\begin{proof} 
(ii) Throughout the proof, $\alpha\geq 1$, $\hb,\hh>0$ and $\bb\geq0$. 
It follows from \eqref{anneferel} that $\cL^{\rm ann}_1\subset\cL^{\rm ann}$.

Note that, 
for $(\hb,\hh)\in\hat\cL^{\rm ann}$, the map $g\mapsto S^{\rm ann}(\hb,\hh,\bb;g)-\th$ 
changes sign at some $g\geq\hat g^{\rm ann}(\hb,\hh)>0$, i.e., $g^{\rm ann}
(\hb,\hh,\bb,\th)\geq\hat g^{\rm ann}(\hb,\hh)>0$ for all $\bb\geq0$ and $\th\in\R$ (see \eqref{anneferel}). Hence 
$\cL^{\rm ann}_1\subset\cL^{\rm ann}$.
 
Note from \eqref{scopadf} and \eqref{Scopann} that 
\begin{equation}
S^{\rm ann}(\hb,\hh,\bb;g)-\th = \hat S^{\rm ann}(\hb, \hh;g)+\bar M(-\bb)-\th.
\end{equation}
Furthermore, note from Fig.~\ref{fig-varfe1}(2--3) that, for $(\hb,\hh)\in\hat\cD^{\rm ann}$, 
the map $g\mapsto \hat S^{\rm ann}(\hb,\hh;g)$ changes sign at $g=0$ while $\hat 
S^{\rm ann}(\hb,\hh;0)$ is either negative or zero. In either case, we need 
\begin{equation}
\th<\bar M(-\bb)+\log \left(\tfrac12\left[1+\sum_{n\in\N}\rho(n)
e^{n[\hM(2\hb)-2\hb \hh]}\right]\right)=\th_\ast(\hb, \hh,\bb)
\end{equation}
to ensure that the map $g\mapsto S^{\rm ann}(\hb,\hh,\bb;g)-\th$ changes sign at a positive 
$g$-value. This concludes the proof that $\cL^{\rm ann}=\cL^{\rm ann}_1\cup \cL^{\rm ann}_2$.
 
\medskip\noindent
(i) As we saw in the proof of (ii), for the map $g\mapsto S^{\rm ann}(\hb,\hh,\bb;g)-\th$
to reach zero we need that $\th\leq \th_\ast(\hb, \hh,\bb)$. Thus, for this range of 
$\th$-values, we know that the map $g\mapsto S^{\rm ann}(\hb,\hh,\bb;g)-\th~$ changes 
sign when it is zero. The proof for $g^{\rm que}(\hb,\hh,\bb,\th)$ follows from 
Fig.~\ref{fig-varfe}.
 
\medskip\noindent
(iii) We first consider the cases: (a) $(\hb,\hh,\bb,\th)\in\cL^{\rm ann}_2$; (b) 
$(\hb,\hh,\bb,\th)\in\cL^{\rm ann}_1$ and $m_\rho<\infty$. In these cases we have that 
$\hh<h_c^\mathrm{ann}(\hb,\bb,\th)$ by (ii). It follows from (\ref{scopadf}--\ref{Scopann}) 
and Fig.~\ref{fig-varfe1} that the map $g\mapsto S^{\rm ann}(\hb,\hh,\bb;g)-\th$ changes 
sign at some $g>0$ while it is either zero or negative. In either case the finiteness of 
the map $g\mapsto S^{\rm que}(\hb,\hh,\bb;g)-\th$ on $(0,\infty)$ and \eqref{squesanninq} 
imply that $g\mapsto S^{\rm que}(\hb,\hh,\bb;g)-\th$ changes sign at a smaller value of 
$g$ than $g\mapsto S^{\rm ann}(\hb,\hh,\bb;g)-\th$ does. This concludes the proof for 
cases (a--b). 
 
We next consider the case: (c) $(\hb,\hh,\bb,\th)\in\cL^{\rm ann}$ with $(\hb,\hh)\in\hat
\cL^{\rm ann}$, $\th\neq\th_\ast(\hb,\hh,\bb)$ and $m_\rho=\infty$. We know from 
\eqref{squesanninq} that $S^{\rm que}(\hb,\hh,\bb;g)<S^{\rm ann}(\hb,\hh,\bb;g)$ for 
$g>0$ and $g\neq\hat g^{\rm ann}(\hb,\hh)$. If $\th>\th_\ast(\hb, \hh,\bb)$, then the 
map $g\mapsto S^{\rm ann}(\hb,\hh,\bb;g)-\th$ changes sign at $\hat g^{\rm ann}(\hb,\hh)$ 
while jumping from $<0$ to $\infty$. By the continuity of the map $g\mapsto S^{\rm que}
(\hb,\hh,\bb;g)$ on $(0,\infty)$, this implies that the map $g\mapsto S^{\rm que}(\hb,\hh,
\bb;g)-\th$ changes sign at a $g$-value smaller than $\hat g^{\rm ann}(\hb,\hh)$. Furthermore, if $\th<\th_\ast(\hb,\hh,\bb)$, then the map $g\mapsto S^{\rm ann}(\hb,\hh,\bb;g)-\th$ 
changes sign at a $g$-value larger than $\hat g^{\rm ann}(\hb,\hh)$, while it is zero. 
Since $S^{\rm que}(\hb,\hh,\bb;g)<S^{\rm ann}(\hb,\hh,\bb;g)$ for $g>\hat g^{\rm ann}
(\hb,\hh)$, we have that $g^{\rm que}(\hb,\hh,\bb,\th)<g^{\rm ann}(\hb,\hh,\bb,\th)$.

\medskip\noindent
(iv) The proof follows from Lemma \ref{grela}.
\end{proof}
 
 
\subsection{Proof of Lemma~\ref{auxlemgap}}
\label{Sec4.2.2}
 
\begin{proof}
The proof comes in five steps. Step 1 proves the strict inequality in \eqref{squesanninq}, 
using a claim about the finiteness of $I^{\rm ann}$ at some specific $Q$ in combination
with arguments from Birker~\cite{Bi08}. Steps 2-5 are used to prove the claim about the finiteness of $I^{\rm ann}$. Note that for $0<g<\hat g^\mathrm{ann}(\hb,\hh)$ the claim trivially follows from Theorems~\ref{varfloc}(iii) and \ref{varflocann}(ii), since 
$S^\mathrm{que}(\hb,\hh,\bb;g)<\infty$ and $S^\mathrm{ann}(\hb,\hh,\bb;g)=\infty$ 
for this range of $g$-values. Thus, what remains to be considered is the case 
$g\geq \hat g^\mathrm{ann}(\hb,\hh)$.

\medskip\noindent
{\bf 1.}
For $g\geq \hat g^\mathrm{ann}(\hb,\hh)$, note from \eqref{uniqmax} and the remark below 
it that there is a unique maximizer $Q_{\hb, \hh,\bb;g}=(q_{\hb,\hh,\bb;g})^{\otimes\N}$ 
for the  variational formula for $S^\mathrm{ann}(\hb,\hh,\bb;g)$ in \eqref{Sandef}, where 
\begin{equation}
\label{qbhbb}
\begin{split}
&q_{\hb, \hh,\bb;g}(d(\ho_1,\bo_1),\ldots,d(\ho_n,\bo_n))\cr
&=\dfrac{\frac12\;\rho(n)e^{-gn}(1+e^{-2\hb\big[n\hh
+\sum_{i=1}^n\ho_i\big]})e^{\bb\bo_1}}{ \cN(\hb, \hh,\bb;g)}
(\hm\otimes\bm)^{\otimes n}(d(\ho_1,\bo_1),\ldots,d(\ho_n,\bo_n)),
\end{split}
\end{equation}
where
\begin{equation}
\label{cnhat}
\cN(\hb, \hh,\bb;g)
=\tfrac12e^{\bar M(-\bb)}\left[\sum_{n\in\N}\rho(n)e^{-gn}
+\sum_{n\in\N}\rho(n) e^{-n(g-[\hat M(2\hb)-2\hb \hh])}\right]
=e^{\bar M(-\bb)} \hat\cN(\hb, \hh;g).
\end{equation}
Note further that $Q_{\hb, \hh,\bb;g}\notin\cR$. We claim that, for $g\geq\hat g^{\rm ann}
(\hb,\hh)$ and under the conditions in \eqref{squesanninq}, 
\begin{equation}
\label{iannfclaim}
H(Q_{\hb, \hh,\bb;g} \mid q_{\rho,\hm\otimes\bm}^{\otimes\N})
=h(q_{\hb, \hh,\bb;g} \mid q_{\rho,\hm\otimes\bm})<\infty.
\end{equation}
This will be proved in Step 2. Let $M<\infty$ be such that $h(q_{\hb, \hh,\bb;g} \mid 
q_{\rho,\hm\otimes\bm})<M$. Then the set 
\begin{equation}
\mathcal{A}_M = \left\{Q\in\cP^{\rm inv}(\widetilde{E}^{\rm \N})
\colon\, H(Q \mid q_{\rho,\hm\otimes\bm}^{\otimes\N})\leq M \right\}
\end{equation}
is compact in the weak topology, and contains $Q_{\hb,\hh,\bb;g}$ in its interior. It 
follows from Birkner \cite{Bi08}, Remark 8, that $\mathcal{A}_M\cap\cR$ is a closed 
subset of $\cP^{\rm inv}(\widetilde{E}^{\rm \N})$. This in turn implies that there exists
a $\delta>0$ such that $B_\delta(Q_{\hb, \hh,\bb;g})$ (the $\delta$-ball around $Q_{\hb,
\hh,\bb;g}$) satisfies $B_\delta(Q_{\hb, \hh,\bb;g})\cap \mathcal{A}_M\subset\cR^c$. 
Let 
\begin{equation}
\bar\delta = \sup\left\{0\leq \delta'\leq \delta\colon\, 
B_{\delta'}(Q_{\hb, \hh,\bb;g})\cap \mathcal{A}_M
= B_{\delta'}(Q_{\hb, \hh,\bb;g})\right\}.
\end{equation}
Then $\cR\subset B_{\bar\delta}(Q_{\hb, \hh,\bb;g})^c$. Therefore, for $g\geq\hat 
g^{\rm ann}(\hb,\hh)$ and under the conditions in \eqref{squesanninq}, we get that 
\begin{equation}
\begin{split}
S^{\rm que}(\hb,\hh,\bb;g)
&=\sup_{Q\in\cC^{\rm fin}\cap\cR}
\left[\bb \Phi(Q)+\Phi_{\hb, \hh}(Q)-g m_Q-I^{\rm ann}(Q)\right]\cr
&\leq \sup_{Q\in\cC^{\rm fin}\cap B_{\bar\delta}(Q_{\hb, \hh,\bb;g})^c}
\left[\bb \Phi(Q)+\Phi_{\hb, \hh}(Q)-g m_Q-I^{\rm ann}(Q)\right]\cr
&< \sup_{Q\in\cC^{\rm fin}}
\left[\bb \Phi(Q)+\Phi_{\hb, \hh}(Q)-g m_Q-I^{\rm ann}(Q)\right]\cr
&= S^{\rm ann}(\hb,\hh,\bb;g) = \log \cN(\hb,\hh,\bb;g).
\end{split}
\end{equation}
The strict inequality follows because no maximizing sequence in $\cC^{\rm fin}\cap 
B_{\bar\delta}(Q_{\hb,\hh,\bb;g})^c$ can have $Q_{\hb, \hh,\bb;g}$ as its limit
($Q_{\hb, \hh,\bb;g}$ being the unique maximizer of the variational problem in 
the second inequality).

\medskip\noindent
{\bf 2.} Let us now turn to the proof of the claim in \eqref{iannfclaim}. For $g\geq\hat
g^{\rm ann}(\hb,\hh)$, it follows from \eqref{qbhbb} and \eqref{cnhat} that 
\begin{equation}
\label{Hfinatqmax}
\begin{split}
h(q_{\hb, \hh\bb;g} \mid q_{\rho,\hm\otimes\bm})\leq I+II,
\end{split}
\end{equation}
where 
\begin{equation}
\label{anexp}
\begin{split}
I &= \bb\int_{\bE} \bo_1 e^{\bb \bo_1-\bar M(-\bb)}\bm(d\bo_1)
- \log\cN(\hb, \hh,\bb;g),\cr
II &= \frac{1}{\hat\cN(\hb, \hh;g)} \sum_{n\in\N} \rho(n)e^{-ng} A(n),\cr
A(n) &= \tfrac12\int_{\hE^n}\left[1+e^{-2\hb\sum_{i=1}^n(\ho_i+\hh)}\right]
\log\left(\tfrac12 \left[1+e^{-2\hb\sum_{i=1}^n(\ho_i+\hh)}\right]\right)
\hm^{\otimes n}(d\ho).
\end{split}
\end{equation}
The inequality in \eqref{Hfinatqmax} follows from \eqref{qbhbb} after replacing $e^{-gn}$ 
by 1. It is easy to see that $I<\infty$, because for $g\geq \hat g^{\rm ann}(\hb,\hh)$ 
we have that $\cN(\hb, \hh,\bb;g)<\infty$. Furthermore, since $\bm$ has a finite moment
generating function, it follows from the H\"older inequality that $\int_{\R} \bo_1 
e^{\bb \bo_1-\bar M(-\bb)}\bm(d\bo_1)<\infty$. We proceed to show that $II<\infty$. 

\medskip\noindent
{\bf 3.} 
We first estimate  $A(n)$. Note that
\begin{equation}
\begin{split}
 A(n) &= \tfrac12\int_{\hE^n}\left[1+e^{-2\hb\sum_{i=1}^n(\ho_i+\hh)}\right]
\log\left(\tfrac12 \left[1+e^{-2\hb\sum_{i=1}^n(\ho_i+\hh)}\right]\right)
\hm^{\otimes n}(d\ho)\cr
&=\tfrac12\int_{\hE^n}\left[1+e^{-2\hb\sum_{i=1}^n(\ho_i+\hh)}\right]
\log\left(\frac{e^{-2\hb\sum_{i=1}^n(\ho_i+\hh)}}{2}
\left[1+e^{2\hb\sum_{i=1}^n(\ho_i+\hh)}\right]\right)
\hm^{\otimes n}(d\ho)\cr
&= A_1(n)+A_2(n),
\end{split}
\end{equation}
where
\begin{equation}
\begin{split}
A_1(n) &= -\hb\sum_{k=1}^n\int_{\hE^n}(\ho_k+\hh)
\left[1+e^{-2\hb\sum_{i=1}^n(\ho_i+\hh)}\right]\hm^{\otimes n}(d\ho),\cr
A_2(n) &= \tfrac12\int_{\hE^n}\left[1+e^{-2\hb\sum_{i=1}^n(\ho_i+\hh)}\right]
\log\left( \tfrac12 \left[1+e^{2\hb\sum_{i=1}^n(\ho_i+\hh)}\right]\right)
\hm^{\otimes n}(d\ho).
\end{split}
\end{equation}
The finiteness of $II$ will follow once we show that 
\begin{equation}
\sum_{n\in\N}\rho(n)e^{-gn}[A_1(n)+A_2(n)]<\infty.
\end{equation} 

\medskip\noindent
{\bf 4.}
We start with the estimation of $A_2(n)$. Put $u_n(\ho)=-2\hb\sum_{i=1}^n(\ho_i+h)$ and, 
for $n\in\N$ and $m\in \N_0$, define
\begin{equation}
B_{m,n} = \left\{\ho\in\hE^n\colon -(m+1)< u_n(\ho)\leq -m\right\},
\qquad m_n=m_n(\hb,\hh)=\lceil 4\hb \hh n \rceil.
\end{equation}
Then note that 
\begin{equation}\label{A2n}
\begin{split}
A_2(n) &=\tfrac12\int_{\hE^n}\left[1+e^{u_n(\ho)}\right]
\log\left( \tfrac12 \left[1+e^{-u_n(\ho)}\right]\right)\hm^{\otimes n}(d\ho)\cr
&\leq \int_{\hE^n}\left[1\vee e^{u_n(\ho)}\right]
\log\left(1\vee e^{-u_n(\ho)}\right)\hm^{\otimes n}(d\ho)\cr
&=-\int_{u_n\leq0} u_n(\ho)\hm^{\otimes n}(d\ho)\cr
&=-\sum_{m\in\N_0} \int_{B_{m,n}} u_n(\ho)\hm^{\otimes n}(d\ho)\cr
&\leq \sum_{m=0}^{m_n} (m+1) + \sum_{m>m_n} (m+1)\P_{\ho}(B_{m,n})\cr
&\leq 4m_n^2 + \sum_{v\in\N} (m_n+v+1)\P_{\ho}(B_{m_n+v,n}).
\end{split}
\end{equation}
The second inequality uses that $-(m+1)< u_n\leq -m$ on $B_{m,n}$ and $\P_{\ho}(B_{m_n+v,n})
\leq1$. Estimate
\begin{equation}
\label{A2npre}
\begin{split}
\P_{\ho}(B_{m_n+v,n})
&=\P_{\ho}\left(\frac{m_n+v}{2\hb} \leq \sum_{k=1}^n(\ho_k+\hh)
< \frac{m_n+v+1}{2\hb}\right)\cr
&\leq \P_{\ho}\left(\sum_{k=1}^n\ho_k\geq \frac{m_n+v}{2\hb}-n\hh\right)\cr
&\leq \P_{\ho}\left(\sum_{k=1}^n\ho_k\geq \frac{4\hb n \hh+v}{2\hb}-n\hh\right)\cr
& =\P_{\ho}\left(\sum_{k=1}^n\ho_k\geq \frac{v}{2\hb}+n\hh\right)\cr
&\leq e^{-C\left(\frac{v}{2\hb}+n\right)}.
\end{split}
\end{equation}
The last inequality uses \cite{BodHoOp11}, Lemma D.1, where $C$ is a positive constant 
depending on $\hh$ only. Inserting \eqref{A2npre} into \eqref{A2n}, 
we get
\begin{equation}
A_2(n)\leq 4 m_n^2+(m_n+1)e^{-Cn}\frac{e^{-1/2\hb}}{1-e^{-1/2\hb}}
+e^{-Cn}\sum_{v\in\N}v\, e^{-v/2\hb}.
\end{equation}
Furthermore, using that $g>0$, we get
\begin{equation}
\begin{split}
\sum_{n\in\N}\rho(n) e^{-ng} A_2(n)
&\leq 4\sum_{n\in\N}\rho(n) e^{-ng} m_n^2
+ \frac{e^{-1/2\hb}}{1-e^{-1/2\hb}}\sum_{n\in\N}\rho(n) (m_n+1)e^{-n[g+C]}\cr
&\qquad + \sum_{n\in\N}\rho(n) e^{-n[g+C]} \sum_{v\in\N}v\, e^{-v/2\hb}<\infty.
\end{split}
\end{equation} 

\medskip\noindent
{\bf 5.} We proceed with the estimation of $A_1(n)$: 
\begin{equation}
\begin{split}
A_1(n) &= -\hb\sum_{k=1}^n\int_{\hE^n} (\ho_k+\hh)
\left[1+e^{-2\hb\sum_{i=1}^n(\ho_i+\hh)}\right]\hm^{\otimes n}(d\ho)\cr
&\leq -\hb\sum_{k=1}^n\int_{\hE^n}\ho_k
\left[1+e^{-2\hb\sum_{i=1}^n(\ho_i+\hh)}\right]\hm^{\otimes n}(d\ho)\cr
&= -n\hb\; e^{n[\hM(2\hb)-2\hb \hh]}\;\E_{\hm_{\hb}}(\ho_1), 
\end{split}
\end{equation}
where $\hm_{\hb}(d\ho_1)=e^{-2\hb\ho_1-M(2\hb)}\hm(d\ho_1)$. The right-hand side is 
non-negative because $\E_{\hm_{\hb}}(\ho_1)\leq0$, and so
\begin{equation}
\sum_{n\in\N}\rho(n) e^{-ng} A_1(n) \leq -\hb \E_{\hm_{\hb}}(\ho_1)
\sum_{n\in\N} n\rho(n)\,e^{-n(g-[M(2\hb)-2\hb \hh])}.
\end{equation}
This bound is finite if 
\begin{enumerate}
\item $g>\hat g^{\rm ann}(\hb,\hh)=\hM(2\hb)-2\hb \hh$;
\item $g=\hat g^{\rm ann}(\hb,\hh)$ and  $m_\rho<\infty$.
\end{enumerate}
This concludes the proof since, if $(\hb,\hh)\in\hat\cD^{\rm ann}$, then $\hat g^{\rm ann}
(\hb,\hh)=0$ and we only want the finiteness for $g>0$.
\end{proof}

For the pinning model, the associated unique maximizer $\bQ_{\bb;g}$ for the variational 
formula for $\bar S^{\rm ann}(\bb;g)$ satisfies $ H(\bQ_{\bb;g}|q^{\otimes\N}_{\rho,\bm})
<\infty$ for $g \geq 0$. However, this does not imply separation between $\bar S^{\rm que}
(\bb;0)$ and $\bar S^{\rm ann}(\bb;0)$, since we may have $\bQ_{\bb;0}\in\bar\cR$ for 
$m_\rho=\infty$. The separation occurs at $g=0$ as soon as $m_\rho<\infty$, since this will 
imply that $\bQ_{\bb;0}\notin\bar\cR$.


\section{Proof of Corollary \ref{hquehannvarfor}} 
\label{Sec5} 

To prove Corollary~\ref{hquehannvarfor} we need some further preparation, formulated as 
Lemmas~\ref{asyptann}--\ref{hizlimts} below. These lemmas, together with the proof of 
Corollary~\ref{hquehannvarfor},  are  given in Section~\ref{Sec5.1}. Section~\ref{Sec5.2} 
contains the proof of the first two lemmas, and Appendix~\ref{appc} the proof of the third
lemma.


\subsection{Key lemmas and proof of Corollary~\ref{hquehannvarfor}}
\label{Sec5.1}

\begin{lemma}
\label{asyptann}
For $\hb,\bb\geq0$, 
\begin{equation}
\label{Stildeann}
\begin{split}
S^{\rm ann}(\hb,\hh,\bb;0)=\left\{\begin{array}{ll}
\bar M(-\bb), &\mbox{ if }\quad \hh=\hat h_c^{\rm ann}(\hb),\\
\infty, &\mbox{ if }\quad \hh<\hat h_c^{\rm ann}(\hb),\\
\bar M(-\bb)-\log 2,  &\mbox{ if }\quad \hh=\infty.
\end{array}
\right.
\end{split}
\end{equation}
Furthermore, the map $\hh\mapsto S^{\rm ann}(\hb,\hh,\bb;0)$ is strictly convex and 
strictly decreasing on $[\hat h_c^{\rm ann}(\hb),\infty)$.
\end{lemma}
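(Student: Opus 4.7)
\medskip\noindent
\textbf{Plan.} The plan is to read off everything from the closed form
\begin{equation*}
S^{\rm ann}(\hb,\hh,\bb;g) \;=\; \log \cN(\hb,\hh,\bb;g)
\;=\; \bar M(-\bb) \;+\; \log\Bigl(\tfrac12\sum_{n\in\N}\rho(n)e^{-ng}
\;+\; \tfrac12\sum_{n\in\N}\rho(n)e^{-n(g-[\hat M(2\hb)-2\hb\hh])}\Bigr)
\end{equation*}
established in \eqref{scopadf} of Theorem~\ref{varflocann}. Specializing to $g=0$ and using $\sum_n\rho(n)=1$ gives
\begin{equation*}
S^{\rm ann}(\hb,\hh,\bb;0) \;=\; \bar M(-\bb) + \log\Bigl(\tfrac12\bigl[1 + \phi(\hh)\bigr]\Bigr),
\qquad
\phi(\hh) \;:=\; \sum_{n\in\N}\rho(n)\,e^{n[\hat M(2\hb)-2\hb\hh]}.
\end{equation*}

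The three identities in \eqref{Stildeann} then drop out by inspection. Recall from \eqref{copannfeg} that $\hat h_c^{\rm ann}(\hb) = \hat M(2\hb)/(2\hb)$, so the exponent $n[\hat M(2\hb)-2\hb\hh]$ is zero exactly at $\hh=\hat h_c^{\rm ann}(\hb)$, giving $\phi=1$ and the first line. For $\hh<\hat h_c^{\rm ann}(\hb)$ the exponent is strictly positive, and the polynomial tail assumption \eqref{rhocond} forces $\phi(\hh)=\infty$, producing the second line. Sending $\hh\to\infty$ kills the second sum entirely, leaving $\log(1/2)+\bar M(-\bb)$.

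For the final statement, I would rewrite $\phi$ after the substitution $y:=2\hb(\hh-\hat h_c^{\rm ann}(\hb))\geq 0$ (valid on the claimed interval) as $\phi(\hh)=\sum_n\rho(n)e^{-ny}=:H(y)-1$, and study $f(y):=\log\tfrac12[1+H(y)-1+1]=\log\tfrac12(1+\sum_n\rho(n)e^{-ny})$ as a function of $y\geq 0$. Strict monotonicity is immediate: termwise differentiation yields a strictly negative derivative in $y$, and $y$ is strictly increasing in $\hh$. For strict convexity, writing $\mu_k(y):=\sum_n n^k\rho(n)e^{-ny}$, I compute
\begin{equation*}
f''(y) \;=\; \frac{(1+\mu_0)\mu_2-\mu_1^2}{(1+\mu_0)^2}
\;\geq\; \frac{\mu_0\mu_2-\mu_1^2 + \mu_2}{(1+\mu_0)^2}
\;\geq\; \frac{\mu_2}{(1+\mu_0)^2} \;>\; 0,
\end{equation*}
where the Cauchy--Schwarz inequality $\mu_1^2\leq\mu_0\mu_2$ (applied to the finite measure $\rho(n)e^{-ny}\,dn$) is used in the first step, and $\mu_2>0$ holds because $\rho$ is supported on $\N$. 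Composing with the affine increasing map $\hh\mapsto y$ transfers strict convexity to $\hh$.

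The only point requiring mild care is handling $\hh=\hat h_c^{\rm ann}(\hb)$, where $y=0$: there $H(0)=1<\infty$ thanks to $\sum_n\rho(n)=1$, so all three moments $\mu_0,\mu_1,\mu_2$ are still finite (the latter two by monotone convergence once $y>0$, and at $y=0$ by definition/interpretation; actually $\mu_1,\mu_2$ at $y=0$ could be infinite if $m_\rho=\infty$, but this only makes $f''(y)$ equal to $+\infty$ for $y\downarrow 0$, reinforcing strict convexity on the open interval and preserving it on the closed interval by lower semi-continuity of convex functions). I expect this boundary consideration at $\hh=\hat h_c^{\rm ann}(\hb)$ to be the only subtle step; everything else is a direct evaluation of the explicit formula.
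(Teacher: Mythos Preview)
Your proof is correct and follows exactly the paper's approach: both start from the closed-form expression $S^{\rm ann}(\hb,\hh,\bb;0)=\bar M(-\bb)+\log\bigl(\tfrac12[1+\sum_{n}\rho(n)e^{n[\hat M(2\hb)-2\hb\hh]}]\bigr)$ from \eqref{scopadf}, after which the paper simply says ``which implies the claim'' while you spell out the evaluations and the Cauchy--Schwarz computation for strict convexity. (There is a harmless notational slip in your intermediate definition of $H(y)$, but the formula $f(y)=\log\tfrac12(1+\sum_n\rho(n)e^{-ny})$ you actually work with is correct.)
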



\begin{figure}[htbp]
\vspace{3.5cm}
\begin{center}
\setlength{\unitlength}{0.48cm}
\begin{picture}(10,4)(0,-4)
\put(0,0){\line(14,0){14}}
\put(0,-4.5){\line(0,5){10}}
{\thicklines
\qbezier(4,3.5)(7.5,-2.8)(14,-3.5)
\qbezier(3.8,5)(3,5)(0,5)
}
\qbezier[30](0,-4)(4,-4)(14,-4)
\qbezier[30](4,0)(4,2)(4,3.5)
\qbezier[30](0,3.5)(2,3.5)(4,3.5)
\put(14.5,-0.2){$\hh$}
\put(2.8,-1){$\hat h_c^{\rm ann}(\hb)$}
\put(-3,6.3){$S^\mathrm{ann}(\hb,\hh,\bb;0)$}
\put(-3,3.2){$\th^{\rm ann}_c(\bb)$}
\put(7,.5){$h^\mathrm{ann}_c(\hb,\bb,\th)$}
\put(-1,-.4){$\th$}
\put(2,5.3){$\infty$}
\put(4,5){\circle{.3}}
\put(4,3.5){\circle*{.3}}
\put(-6,-4.2){$\th^{\rm ann}_c(\bb)-\log 2$}
\end{picture}
\end{center}
\vspace{-0.2cm}
\caption{\small Qualitative picture of  $\hh \mapsto S^\mathrm{ann}(\hb,\hh,\bb;0)$ 
for $\hb>0$ and  $\bb\geq0$. } 
\label{fig-copadannvarhs}
\vspace{0cm}
\end{figure}

\begin{lemma}
\label{queasympt}
For every $\hb>0$ and $\bb\geq0$ (see Fig.~{\rm \ref{fig-copadannvarhs}}), 
\begin{equation}
S^{\rm que}(\hb,\hh,\bb;0)=S_*^\mathrm{que}(\hb,\hh,\bb) 
\left\{\begin{array}{ll}
=\infty, &\mbox{ for }\quad \hh<\hat h_c^{\rm ann}(\hb/\alpha)\\
>0,      &\mbox{ for }\quad \hh=\hat h_c^{\rm ann}(\hb/\alpha)\\
<\infty, &\mbox{ for }\quad \hh>\hat h_c^{\rm ann}(\hb/\alpha).
\end{array}
\right.
\end{equation}
\end{lemma}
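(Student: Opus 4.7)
The plan is to work with the alternative variational formula $S_*^\mathrm{que}(\hb,\hh,\bb)$ from Theorem~\ref{varfloc}(ii), which coincides with $S^\mathrm{que}(\hb,\hh,\bb;0)$, and to reduce the three cases to the copolymer-only version of the lemma established in \cite{BodHoOp11}.

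For the lower bounds (first two cases), the strategy is a direct comparison with the copolymer-only problem via a decoration construction. Given any test measure $\hQ\in\hat\cC^{\rm fin}$, build $Q\in\cC^{\rm fin}$ on $\wE^\N$ by attaching to each word of $\hQ$ an independent i.i.d.\ sequence of $\bo$-letters drawn from $\bm$. The product structure $\nu=\hm\otimes\bm$ then gives $\hat\pi Q=\hQ$, $\bar\pi_{1,1}Q=\bm$, $\Psi_Q=\Psi_{\hQ}\otimes\bm^{\otimes\N}$, and $I^\mathrm{que}(Q)=\hat I^\mathrm{que}(\hQ)$ (all of which follow from the definitions of $\Psi$ and $I^\mathrm{fin}$ applied to the product form), so that $\Phi(Q)=0$ and $\Phi_{\hb,\hh}(Q)=\Phi_{\hb,\hh}(\hQ)$. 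Taking the supremum over $\hQ$ yields
\begin{equation*}
S_*^\mathrm{que}(\hb,\hh,\bb)\geq \hat S_*^\mathrm{que}(\hb,\hh),
\end{equation*}
and the copolymer version of the lemma in \cite{BodHoOp11} delivers $\hat S_*^\mathrm{que}(\hb,\hh)=\infty$ for $\hh<\hat h_c^{\rm ann}(\hb/\alpha)$ and $\hat S_*^\mathrm{que}(\hb,\hh)>0$ at $\hh=\hat h_c^{\rm ann}(\hb/\alpha)$, exactly the required statements.

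For the upper bound ($\hh>\hat h_c^{\rm ann}(\hb/\alpha)$), the plan is to peel off the pinning contribution using the Gibbs variational principle with a scaling parameter $\lambda\in(0,1)$: applying $\int f\,d\mu\leq H(\mu\mid\pi)+\log\int e^f\,d\pi$ with $f=(\bb/\lambda)\bo$, $\mu=\bar\pi_{1,1}Q$, $\pi=\bm$, and invoking the projection monotonicity $H(\bar\pi_{1,1}Q\mid\bm)\leq H(Q\mid q_{\rho,\nu}^{\otimes\N})\leq I^\mathrm{que}(Q)$, one obtains $\bb\Phi(Q)-\lambda I^\mathrm{que}(Q)\leq \lambda\bar M(-\bb/\lambda)<\infty$ by \eqref{mgffin}. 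Consequently,
\begin{equation*}
S_*^\mathrm{que}(\hb,\hh,\bb)\leq \lambda\bar M(-\bb/\lambda)+\sup_{Q\in\cC^{\rm fin}}\bigl[\Phi_{\hb,\hh}(Q)-(1-\lambda)I^\mathrm{que}(Q)\bigr],
\end{equation*}
and by the contraction \eqref{conprin} the residual supremum reduces to the $\hat\pi$-marginal, giving a rescaled copolymer-only variational expression whose finiteness for $\hh>\hat h_c^{\rm ann}(\hb/\alpha)$ follows by the Jensen/tilting argument of \cite{BodHoOp11} (tilting by $\hat\mu_{2\hb/\alpha}$ and exploiting the Legendre-type identity $\alpha\hat M(2\hb/\alpha)=2\hb\,\hat h_c^{\rm ann}(\hb/\alpha)$), provided $\lambda$ is small enough that the rescaling of the rate function does not push the effective threshold above $\hh$.

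The main obstacle is the upper bound: the Gibbs rescaling produces a pinning penalty $\lambda\bar M(-\bb/\lambda)$ that blows up as $\lambda\downarrow 0$ (of order $\bb^2/\lambda$ for a subgaussian $\bm$), while simultaneously weakening the copolymer rate function to $(1-\lambda)I^\mathrm{que}$. Verifying that the Monthus threshold $\hat h_c^{\rm ann}(\hb/\alpha)$ is preserved exactly as $\lambda\downarrow 0$, rather than being shifted by the rescaling, is the technical point; it is expected to follow by the same Jensen-plus-tilting estimate as in the copolymer case, combined with the exponential integrability of $\bm$ supplied by \eqref{mgffin} to balance the two diverging contributions against one another.
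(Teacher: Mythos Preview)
Your lower-bound argument via the decoration construction is correct and is morally the same as the paper's: the paper inserts the explicit test measure $q_{\hb}^L = \delta_{nL}\,\hat\mu_{\hb/\alpha}^{\otimes n}\otimes\bar\mu^{\otimes n}$ into the variational formula, which is exactly your ``attach i.i.d.\ $\bar\mu$-letters to a copolymer test measure'' at the level of specific $\hQ$'s. The positivity at $\hh=\hat h_c^{\rm ann}(\hb/\alpha)$ then indeed reduces to the copolymer result in \cite{BodHoOp11}.

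For the upper bound, your route diverges from the paper's. The paper does \emph{not} work variationally: it applies the fractional-moment bound
\[
\E\bigl([S_N^\o(g)]^t\bigr)\leq\Bigl(2^{1-t}\sum_{n}\rho_g(n)^t\,e^{\bar M(-\bb t)}\Bigr)^N
\]
directly to the combined partition function (the pinning disorder contributes the single factor $e^{\bar M(-\bb t)}$), chooses $t\in(1/\alpha,1]$ with $\hat M(2\hb t)-2\hb\hh t\leq 0$, and closes via Borel--Cantelli. This is cleaner because it never separates the two disorders.

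Your Gibbs-split strategy is viable but the step you flag as ``the technical point'' is genuinely incomplete as written. After peeling off $\bb\Phi$ you must show that $\sup_{\hQ}[\Phi_{\hb,\hh}(\hQ)-(1-\lambda)\hat I^{\rm que}(\hQ)]<\infty$ for small $\lambda$. The ``Jensen/tilting'' allusion does not obviously deliver this; what actually closes it is the pointwise convexity bound $\phi_{\hb,\hh}^{1/(1-\lambda)}\leq\phi_{\hb/(1-\lambda),\hh}$ (Jensen applied to $t\mapsto t^{1/(1-\lambda)}$ on $\tfrac12(1+x)$), which reduces the rescaled problem to $\hat S_*^{\rm que}(\hb/(1-\lambda),\hh)$, finite once $\hh>\hat h_c^{\rm ann}(\hb/((1-\lambda)\alpha))$ by the copolymer result and continuity of $\hat h_c^{\rm ann}$. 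Without this step spelled out, the argument is a sketch; and once spelled out, it is longer than the paper's direct fractional-moment route.
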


\begin{figure}[htbp]
\vspace{3.5cm}
\begin{minipage}[hbt]{8.1cm}
\centering
\setlength{\unitlength}{0.48cm}
\begin{picture}(10,4)(0,-4)
\put(2,0){\line(10,0){10}}
\put(2,-4.5){\line(0,9){9}}
{\thicklines
\qbezier(6,2.5)(7.5,-2.8)(11.5,-3.5)
\qbezier(2,3.5)(4.5,3.5)(5.8,3.5)
}
\qbezier[30](2,-4)(6,-4)(11.5,-4)
\qbezier[30](2,2.5)(4,2.5)(6,2.5)
\qbezier[30](6,0)(6,1.5)(6,2.5)
\put(12.5,-0.2){$\hh$}
\put(-.9,5.5){$S^\mathrm{que}(\hb,\hh,\bb;0)$}
\put(-1.8,2.4){\small $s_*(\hb,\bb,\a)$}
\put(7.5,.3){$h^\mathrm{que}_c(\hb,\bb,\th)$}
\put(4,-1){$\hat h_c^\mathrm{ann}(\frac{\hb}{\a})$}
\put(1,-.4){$\th$}
\put(4,3.64){$\infty$}
\put(6,2.5){\circle*{.4}}
\put(6,3.5){\circle{.4}}
\put(-3.8,-4){$\th^{\rm que}_c(\bb)-\log 2$}
\end{picture}
\begin{center}
(a) 
\end{center}
\end{minipage}
\begin{minipage}[hbt]{8.1cm}
\centering
\setlength{\unitlength}{0.48cm}
\begin{picture}(10,4)(0,-4)
\put(2,0){\line(10,0){10}}
\put(2,-4.5){\line(0,9){9}}
{\thicklines
\qbezier(6,3.5)(7.5,-2.8)(11.5,-3.5)
\qbezier(2,3.5)(4.5,3.5)(6,3.5)
}
\qbezier[30](2,-4)(6,-4)(11.5,-4)
\qbezier[30](6,0)(6,1.5)(6,3.5)
\put(12.5,-0.2){$\hh$}
\put(-.9,5.5){$S^\mathrm{que}(\hb,\hh,\bb;0)$}
\put(-1.8,3.4){\small $s_*(\hb,\bb,\a)$}
\put(7.5,.3){$h^\mathrm{que}_c(\hb,\bb,\th)$}
\put(4,-1){$\hat h_c^\mathrm{ann}(\frac{\hb}{\a})$}
\put(1,-.4){$\th$}
\put(4,3.64){$\infty$}
\put(-3.8,-4){$\th^{\rm que}_c(\bb)-\log 2$}
\end{picture}
\begin{center}
(b) 
\end{center}
\end{minipage}
\vspace{-0.2cm}
\caption{\small Qualitative picture of  $\hh \mapsto S^\mathrm{que}(\hb,\bb,\hh;0)$ for 
$\hb>0$ and $\bb\geq0$: (a) $s_*(\hb,\bb,\a)<\infty$; (b) $s_*(\hb,\bb,\a)=\infty$.} 
\label{fig-copadvarhs}
\vspace{-.20cm}
\end{figure}

\begin{lemma}
\label{hizlimts}
For every $\hb>0$ and $\bb\geq0$ (see Fig.~{\rm \ref{fig-copadvarhs}}),
\begin{equation}\label{hzilim1}
\begin{split}
S^{\rm que}(\hb,\infty-,\bb;0)
& = \lim_{\hh\to\infty} S^{\rm que}(\hb,\hh,\bb;0)
=S^{\rm que}(\hb,\infty,\bb;0) = \th_c^{\rm que}(\bb)-\log 2.
\end{split}
\end{equation}
\end{lemma}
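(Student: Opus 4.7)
The plan is to identify the value $S^{\rm que}(\hb,\infty,\bb;0)$ directly from the variational formula at $\hh=\infty$, and then sandwich the limit $\lim_{\hh\to\infty}S^{\rm que}(\hb,\hh,\bb;0)$ between this value from above and below. For the first step, observe that $\phi_{\hb,\hh}(\ho) = \tfrac12(1+e^{-2\hb\hh\tau_1 - 2\hb\sum_{k=1}^{\tau_1}\ho_k})$ reduces to $1/2$ at $\hh=\infty$ (using $\hb>0$), so the functional $\Phi_{\hb,\infty}(Q)\equiv -\log 2$ is constant. Substituting into the alternative quenched variational formula of Theorem~\ref{varfloc}(ii),
\begin{equation}
S^{\rm que}(\hb,\infty,\bb;0)
= -\log 2 + \sup_{Q\in\cC^{\rm fin}}\bigl[\bb\Phi(Q)-I^{\rm que}(Q)\bigr].
\end{equation}
Since $\Phi(Q)$ depends on $Q$ only through $\bar\pi Q$, the contraction principle \eqref{conprin} collapses the last supremum to $\sup_{\bar Q}\bigl[\bb\bar\Phi(\bar Q)-\bar I^{\rm que}(\bar Q)\bigr] = \bar S^{\rm que}(\bb;0)$, which by the pinning version of Theorem~\ref{freeenegvar}(ii) equals $\th_c^{\rm que}(\bb)$. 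Hence $S^{\rm que}(\hb,\infty,\bb;0) = \th_c^{\rm que}(\bb)-\log 2$, giving the last equality of \eqref{hzilim1}.

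For the middle equality, the existence of the limit follows from the monotonicity and convexity of $\hh\mapsto S^{\rm que}(\hb,\hh,\bb;0)$ on $(0,\infty)$ guaranteed by Corollary~\ref{hquehannvarfor}. The lower bound is immediate: since $\phi_{\hb,\hh}(\ho)\geq 1/2$ for all $\hh>0$ and all finite words $\ho$, we have $\Phi_{\hb,\hh}(Q)\geq -\log 2 = \Phi_{\hb,\infty}(Q)$ for every $Q\in\cC^{\rm fin}$, hence $S^{\rm que}(\hb,\hh,\bb;0)\geq S^{\rm que}(\hb,\infty,\bb;0) = \th_c^{\rm que}(\bb)-\log 2$ for every $\hh>0$, and the liminf of the left-hand side as $\hh\to\infty$ is at least $\th_c^{\rm que}(\bb)-\log 2$.

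The matching upper bound $\limsup_{\hh\to\infty}S^{\rm que}(\hb,\hh,\bb;0)\leq \th_c^{\rm que}(\bb)-\log 2$ is the main obstacle. The idea is to sharpen the trivial estimate $\Phi_{\hb,\hh}(Q)\leq 0$ via $\log(1+x)\leq x$, obtaining
\begin{equation}
\Phi_{\hb,\hh}(Q)\leq -\log 2 + R_{\hh}(Q),\qquad
R_{\hh}(Q)=\int e^{-2\hb\hh\tau_1-2\hb\sum_{k=1}^{\tau_1}\ho_k}\,\hat\pi_1 Q(d\ho),
\end{equation}
and combining this with the contraction bound $\bb\Phi(Q)-I^{\rm que}(Q)\leq\bar S^{\rm que}(\bb;0)=\th_c^{\rm que}(\bb)$ from the first paragraph to conclude that $S^{\rm que}(\hb,\hh,\bb;0)\leq \th_c^{\rm que}(\bb)-\log 2 + \sup_{Q}R_{\hh}(Q)$, the sup being over a suitable sub-class containing the near-maximizers of the $\hh$-functional. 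The remaining task is to show that this residual supremum vanishes as $\hh\to\infty$. Under the reference measure one has $\int e^{-2\hb\hh\tau_1-2\hb\sum\ho_k}\,dq_{\rho,\hm}=\sum_n\rho(n)\,e^{-n(2\hb\hh-\hM(2\hb))}\to 0$ as $\hh\to\infty$ for $\hh>\hat h_c^{\rm ann}(\hb)=\hM(2\hb)/(2\hb)$, and a Donsker--Varadhan type tilting argument transfers this decay to $\hat\pi_1 Q$ uniformly on compact level sets of $I^{\rm que}$. The delicate point, which is the coercive confinement of near-maximizers to such a level set, will rely on the upper bound $F_{\hh}(Q)\leq\th_c^{\rm que}(\bb)$ combined with the truncation approximation \eqref{truncapproxcont} to reduce first to words of length $\leq r$ (where $R_{\hh}$ vanishes uniformly as $\hh\to\infty$) and then to pass $r\to\infty$. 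This uniform control is relegated to Appendix~\ref{appc}.
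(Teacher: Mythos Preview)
Your identification of $S^{\rm que}(\hb,\infty,\bb;0)=\th_c^{\rm que}(\bb)-\log 2$ and the lower bound on the limit are correct and coincide with the paper's argument. The gap is entirely in the upper bound.

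Your route via $\log(1+x)\le x$ produces the residual $R_{\hh}(Q)=\int e^{-2\hb\hh\tau_1-2\hb\sum\ho_k}\,\hat\pi_1 Q(d\ho)$, which is an \emph{exponential moment} of the empirical copolymer disorder under $Q$. This quantity is unbounded over $\cC^{\rm fin}$: a $Q$ that loads words with strongly negative $\ho$-letters makes $R_{\hh}(Q)$ arbitrarily large. You therefore need to confine attention to near-maximizers and argue that on this set $R_{\hh}(Q)\to 0$. But near-maximizer confinement only gives a bound on $I^{\rm que}(Q)$, i.e., a relative-entropy bound on $\hat\pi_1 Q$; bounded relative entropy does not yield uniform control on exponential moments such as $R_{\hh}(Q)$. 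Your ``Donsker--Varadhan tilting'' and ``coercive confinement'' are gestures, not arguments, and the final sentence defers the whole difficulty to Appendix~\ref{appc}---which is exactly where the lemma you are proving lives. This is circular.

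The paper avoids the problem by partially annealing \emph{before} passing to the variational formula. Lemma~\ref{hizfin} takes the $\hat\omega$-expectation of $S_N^\omega$ (at fixed $\bar\omega$) and uses a Markov/Borel--Cantelli argument to obtain, for $\hh\ge\hat h_c^{\rm ann}(\hb)$,
\[
S^{\rm que}(\hb,\hh,\bb;0)\le\sup_{\bQ\in\bar\cC^{\rm fin}}\bigl[\bb\Phi(\bQ)+\Xi_{\hb,\hh}(r_{\bQ})-\bar I^{\rm que}(\bQ)\bigr],\qquad
\Xi_{\hb,\hh}(r)=\sum_n r(n)\log\tfrac12\bigl(1+e^{n[\hM(2\hb)-2\hb\hh]}\bigr).
\]
The point is that averaging over $\hat\omega$ replaces your unbounded $R_{\hh}(Q)$ by $\Xi_{\hb,\hh}$, which depends only on word \emph{lengths} and is uniformly bounded: $-\log 2\le\Xi_{\hb,\hh}(r)\le\log\tfrac12(1+e^{\hM(2\hb)-2\hb\hh})$. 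The right-hand side then splits cleanly into $\th_c^{\rm que}(\bb)$ plus a constant tending to $-\log 2$ as $\hh\to\infty$. This partial-annealing step is the missing idea in your proposal.
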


We now give the proof of Corollary~\ref{hquehannvarfor}.

\begin{proof}
Throughout the proof $\hb>0$, $\bb\geq0$ and $\th\in\R$ are fixed. Note from \eqref{phidef} 
that the map $\hh\mapsto\log\phi_{\hb,\hh}(\ho)$ is strictly decreasing and convex for 
all $\ho\in\widetilde{\hE}$. It therefore follows from  \eqref{Sdef} and \eqref{Sandef} 
that the maps $\hh \mapsto S^\mathrm{que}(\hb,\hh,\bb;0)$ and  $\hh \mapsto S^\mathrm{ann}
(\hb,\hh,\bb;0)$ are strictly decreasing when finite (because $\tau(\o)\geq 1$) and 
convex (because sums and suprema of convex functions are convex). 

Recall from \eqref{copadacc} and \eqref{gvarexp} that
\begin{equation}
\label{sancc}
\begin{split}
h_c^{\rm ann}(\hb,\bb,\th)
&=\inf\left\{\hh\geq0\colon\,g^{\rm ann}(\hb,\hh,\bb,\th)=0\right\}\cr
&=\inf\left\{\hh\geq0\colon\,S^{\rm ann}(\hb,\hh,\bb;0)-\th\leq0\right\}.
\end{split}
\end{equation}
Indeed, it follows from \eqref{gvarexp} that $g^{\rm ann}(\hb,\hh,\bb,\th)=0$ is equivalent 
to saying that the map $g\mapsto S^{\rm ann}(\hb,\hh,\bb;g)-\th$ changes sign at zero. 
This change of sign can happen while  $S^{\rm ann}(\hb,\hh,\bb;g)-\th$ is either zero or 
negative (see e.g.\ Fig.~\ref{fig-varfe}(2--3)).

For $\th\geq\th^{\rm ann}_c(\bb)$, it follows from Lemma~\ref{asyptann} and  
Fig.~\ref{fig-copadannvarhs} that $\hh=\hat h^{\rm ann}_c(\hb)$ is the smallest 
value of $\hh$ at which $S^{\rm ann}(\hb,\hh,\bb;0)-\th\leq 0$ and hence $h^{\rm ann}_c(\hb,
\bb,\th)=\hat h^{\rm ann}_c(\hb)$. Furthermore, note from Fig.~\ref{fig-copadannvarhs} 
that the map $\hh\mapsto S^{\rm ann}(\hb,\hh,\bb;0)$ is strictly decreasing and convex 
on $[\hat h^{\rm ann}_c(\hb),\infty)$ and has the interval $\left(\th^{\rm ann}_c(\bb)
-\log 2,\th^{\rm ann}_c(\bb)\right]$ as its range. In particular, $S^{\rm ann}(\hb,\hat
h^{\rm ann}_c(\hb),\bb;0)=\th^{\rm ann}_c(\bb)$ and $S^{\rm ann}(\hb,\infty,\bb;0)
=\th^{\rm ann}_c(\bb)-\log 2$. Therefore, for $\th\in \left(\th^{\rm ann}_c(\bb)-\log 2,
\th^{\rm ann}_c(\bb)\right]$, the map $\hh\mapsto S^{\rm ann}(\hb,\hh,\bb;0)-\th$ changes 
sign at the unique value of $\hh$ at which $S^{\rm ann}(\hb,\hh,\bb;0)=\th$.

For $\th\leq \th^{\rm ann}_c(\bb)-\log 2$, it follows from Fig.~\ref{fig-copadannvarhs} 
that $S^{\rm ann}(\hb,\hh,\bb;0)-\th>0$ for all $\hh\in[0,\infty)$. It therefore follows 
from \eqref{sancc} that
\begin{equation}
\begin{split}
h_c^{\rm ann}(\hb,\bb,\th)
&=\inf\left\{\hh\geq0\colon\,S^{\rm ann}(\hb,\hh,\bb;0)-\th\leq0\right\}=\infty.
\end{split}
\end{equation}

The proof for $h_c^{\rm que}(\hb,\bb,\th)$ follows from that of $h_c^{\rm ann}(\hb,\bb,\th)$ 
after replacing $S^\mathrm{ann}(\hb,\hh,\bb;0)$, $\th_c^{\rm ann}(\bb)-\log 2$ and 
$\th_c^{\rm ann}(\bb)$ by $S^\mathrm{que}(\hb,\hh,\bb;0)$, $\th_c^{\rm que}(\bb)-\log 2$ 
and  $s^{\ast}(\hb,\bb,\alpha)$, respectively. 

Since both the quenched and the annealed free energies are convex  functions of $\bar h$ 
(by H\"older inequality), $\bar h \mapsto h_\ast^{\rm que}(\hb,\bb,\th)$ and  $\bar h \mapsto 
h_\ast^{\rm ann}(\hb,\bb,\th)$ are also convex and strictly decreasing.

\end{proof}


\subsection{Proof of Lemmas~\ref{asyptann}--\ref{queasympt}}
\label{Sec5.2}

{\bf Proof of Lemma~\ref{asyptann}:}\\
\begin{proof}
Note from \eqref{scopadf}  that 
\begin{equation}
S^{\rm ann}(\hb,\hh,\bb;0)
=\bar M(-\bb)+\log\left(\tfrac12\left[1+\sum_{n\in\N}\rho(n)
e^{n[\hM(2\hb)-2\hb \hh]}\right]\right),
\end{equation}
which implies the claim.
\end{proof}

\noindent
{\bf Proof of Lemma~\ref{queasympt}:}\\
\begin{proof}
Throughout the proof $\hb,\hh>0$ and $\bb\geq0$ are fixed. The proof uses arguments 
from \cite{BodHoOp11}, Theorem 3.3 and Section 6.  Note from \eqref{copadSlim}, 
\eqref{seqsbar} and Lemma~\ref{varlem} that  
\begin{equation}
\label{copadSlimref}
S^\mathrm{que}(\hb,\hh,\bb;g) 
= \th + \limsup_{N\to\infty} \frac{1}{N} \log F_N^{\hb, \hh,\bb,\th,\o}(g)
= \log \mathcal{N}(g) + \limsup_{N\to\infty} \frac{1}{N}\log  S^{\o}_N(g),
\end{equation}
where
\begin{equation} 
S_N^\o (g) =E^*_g\left(\exp\left [N\left(\Phi_{\hb, \hh}(R_N^{\o})+\bb\,
\Phi(R_N^{\o})\right)\right]\right).
\end{equation}
It follows from the fractional-moment argument in \cite{BodHoOp11}, Eq.\ (6.4), that
\begin{equation}
\label{braceterm}
\E([S_N^\o (g)]^t)\leq \left(2^{1-t}\sum_{n\in\N}\rho_g(n)^{t}
e^{\tM(-\bb t)}\right)^{N}<\infty, \quad g\geq 0,
\end{equation}
where $t\in[0,1]$ is chosen such that $\hM(2\hb t)-2\hb \hh t\leq 0$. Abbreviate the 
term inside the brackets of \eqref{braceterm} by $K_t$ and note that 
\begin{equation}
\begin{split}
\P\left(\frac{t}{N}\log S_N^\o(g)\geq\log K_t+\epsilon\right)
&=\P\left( [S_N^\o(g)]^t\geq K_t^N e^{N\epsilon}\right)\cr
&\leq \E([S_N^\o(g)]^t)K_t^{-N} e^{-N\epsilon}\cr
&\leq e^{-N\epsilon}, \qquad \epsilon>0.
\end{split}
\end{equation}
Therefore, for $g>0$, this estimate together with the Borel-Cantelli lemma shows that 
$\o$-a.s.\ (recall \eqref{rhoz})
\begin{equation}
\begin{split}
S^\mathrm{que}(\hb,\hh,\bb;g)
&= \log\cN(g)+\limsup_{N\rightarrow\infty}\frac1N\log S_N^\o(g)
\leq \frac1t \log K_t+\log\cN(g)\cr 
&= \frac{1-t}{t}\log 2 + \frac1t \log\left(\sum_{n\in\N}
\rho_g(n)^{t}\right)+\frac1t \tM(-\bb t)+\log\cN(g)<\infty.
\end{split}
\end{equation}
This estimate also holds for $g=0$ when $\sum_{n\in\N}\rho(n)^{t}<\infty$. This is the 
case for any pair $t\in(1/\a,1]$ and $\hh>\hat h_c^{\rm ann}(\hb/\a)$ satisfying 
$\hM(2\hb t)-2\hb \hh t\leq 0$ (recall \eqref{rhocond}). Therefore we conclude that  
$S^{\rm que}(\hb,\hh,\bb;0)<\infty$ whenever $\hh>\hh_c^{\rm ann}(\hb/\a)$. 

To prove that $S^{\rm que}(\hb,\hh,\bb;0)=\infty$ for $\hh<\hat h_c^{\rm ann}(\hb/\alpha)$, 
we  replace $q_{\bb}^L$ in \cite{BodHoOp11}, Eq.\ (6.8), by
\begin{equation}
q_{\hb}^L(d(\ho_1,\bo_1),\ldots,d(\ho_n,\bo_n))
= \delta_{n,L} \left[\hm_{\hb/\a}(\ho_1)\times\cdots\times\hm_{\hb/\a}(d\ho_n)\right]
\times\left[\bm(d\bo_1)\times\cdots\times\bm(d\bo_n)\right],
\end{equation}
where
\begin{equation}
\hm_{\hb/\a}(d\ho_1)=e^{-(2\hb/\alpha)\ho_1-\hM(2\hb/\alpha)}\hm(d\ho_1).
\end{equation} 
With this choice the rest of the argument in \cite{BodHoOp11}, Section 6.2, goes through 
easily. 

Finally, to prove that $S^{\rm que}(\hb,\hh,\bb;0)>0$ at $\hh=\hat h_c^{\rm ann}(\hb/\alpha)$ 
we proceed as follows. Adding, respectively, $\bb \Phi(Q)$ and $\bb\sum_{n\in\N}\int_{\bE^n}
\bo_1 q(d(\ho_1,\bo_1),\ldots,d(\ho_n,\bo_n))$ to the functionals being optimized in 
\cite{BodHoOp11}, Eqs.\ (6.19--6.20), we get the following analogue of \cite{BodHoOp11},
Eq.\ (6.21),
\begin{equation}
\begin{split}
&q_{\hb, \hh,\bb}(d(\ho_1,\bo_1),\ldots,d(\ho_n,\bo_n))\cr
&=\frac{1}{\hat\cN(\hb, \hh)\,e^{\tM(-\bb/\a)}}
\left[\phi_{\hb, \hh}((\ho_1,\ldots\ho_n))\,e^{\bb \bo_1}\right]^{1/\a}
q_{\rho, \hm\otimes\bm}(d(\ho_1,\bo_1),\ldots,d(\ho_n,\bo_n)),
\end{split}
\end{equation}
where 
\begin{equation}
\hat\cN(\hb, \hh)=\sum_{n\in\N}\rho(n)\int_{\hE^n}\hm(d\ho_1)\times\cdots\times\hm(\ho_n)
\left\{\tfrac12\left(1+e^{-2\hb\sum_{k=1}^n(\ho_k+\hh)}\right)\right\}^{1/\a}.
\end{equation}
Note from \cite{BodHoOp11}, Eqs.\ (6.23--6.29), that $1<\hat\cN(\hb,\hh_c^{\rm ann}(\hb/\a))
\leq 2^{1-(1/\a)}$. Therefore it follows from \cite{BodHoOp11}, Steps 1 and 2 in Section 6.3, that 
\begin{equation}
S^{\rm que}(\hb,\hh_c^{\rm ann}(\hb/\a),\bb;0)= S_*^{\rm que}(\hb,\hh_c^{\rm ann}
(\hb/\a),\bb)\geq \a\log \hat\cN(\hb,\hh_c^{\rm ann}(\hb/\a))+\a \tM(-\bb/\a)>0.
\end{equation}
\end{proof}


\section{Proofs of Corollaries~\ref{hcubstrict}--\ref{locpathprop}}
\label{Sec6}


\subsection{Proof of  Corollary \ref{hcubstrict}}
\label{Sec6.1}
\begin{proof}
Throughout the proof, $\alpha>1$, $\hb>0$, $\bb\geq0$ and $\th\in\R$ are fixed.
The proof for $\bar h^{\rm que}_c(\bb)-\log 2<\th\leq\bar h^{\rm ann}_c(\bb)-
\log 2$ is trivial, since $h_c^{\rm ann}(\hb,\bb,\th)=\infty$ and $h_c^{\rm que}
(\hb,\bb,\th)<\infty$. The rest of the proof will follow once we show that 
$h_c^{\rm que}(\hb,\bb,\th)<h_c^{\rm ann}(\hb,\bb,\th)$ for $\bar h^{\rm ann }_c(\bb)
-\log 2<\th\leq\bar h^{\rm ann}_c(\bb)$. This is because, for $\th\geq\bar h^{\rm ann}_c(\bb)$, 
$h_c^{\rm ann}(\hb,\bb,\th)=\hat h^{\rm ann}_c(\hb)$ and $\th\mapsto h^{\rm que}
(\hb,\bb,\th)$ is non-increasing. Furthermore, the map $\th\mapsto h^{\rm ann}
(\hb,\bb,\th)$ is also non-increasing, and so $h_c^{\rm ann}(\hb,\bb,\th)\geq
\hat h^{\rm ann}_c(\hb)$.

For $\bar h^{\rm ann }_c(\bb)-\log 2<\th\leq\bar h^{\rm ann}_c(\bb)$, it follows 
from Corollary~\ref{hquehannvarfor} that $h_c^{\rm ann}(\hb,\bb,\th)$ is the unique
$\hh$-value that solves the equation $S^{\rm ann}(\hb,\hh,\bb;0)=\th$. Note that 
for $\hh\geq \hat h_c^{\rm ann}(\hb)=\hM(2\hb)/2\hb$, which is the range of 
$\hh$-values attainable by $h_c^{\rm ann}(\hb,\bb,\th)$, the measure 
$q_{\hb,\hh,\bb;0}$ (recall \eqref{uniqmax}) is well-defined and is the unique 
minimizer of the last variational formula in \eqref{varredann}, for $g=0$. 
Hence, for $\bar h^{\rm ann}_c(\bb)-\log 2<\th\leq\bar h^{\rm ann}_c(\bb)$, 
$h_c^{\rm ann}(\hb,\bb,\th)$ is the unique $\hh$-value that solves the equation
\begin{equation}
S^{\rm ann}(\hb,\hh,\bb;0)-\th=\bar M(-\bb)+\log\hat\cN(\hb, \hh;0)-\th=0.
\end{equation}
Again, it follows from \eqref{truncapproxcont} that, for any $Q\in\cP^{\rm inv}
(\widetilde{E}^\N)$,
\begin{equation}
\begin{split}
I^{\rm que}(Q) 
&=\sup_{{\rm tr}\in\N}I^{\rm que}([Q]_{{\rm tr}})=\sup_{{\rm tr}\in\N}
\left[H\left([Q]_{{\rm tr}}|q_{\rho,\hm\otimes\bm}^{\otimes\N}\right)
+(\alpha-1)m_{[Q]_{{\rm tr}}}
H\left(\Psi_{[Q]_{{\rm tr}}}|(\hm\otimes\bm)^{\otimes\N}\right)\right]\cr
&\geq H\left(Q \mid q_{\rho,\hm\otimes\bm}^{\otimes\N}\right)
+(\alpha-1)m_{[Q]_{{\rm tr}}}
H\left(\Psi_{[Q]_{{\rm tr}}}|(\hm\otimes\bm)^{\otimes\N}\right), 
\quad {\rm tr}\in\N.
\end{split}
\end{equation}
Furthermore, it follows from \eqref{spentrdef} and the remark below it that
\begin{equation}
\label{red1}
H(Q \mid q_{\rho,\hm\otimes\bm}^{\otimes\N}) 
\geq h(\pi_1Q \mid q_{\rho,\hm\otimes\bm}),
\qquad H(\Psi_{[Q]_{{\rm tr}}} \mid (\hm\otimes\bm)^{\otimes\N}) 
\geq h(\widetilde\pi_{1} 
\Psi_{[Q]_{{\rm tr}}} \mid \hm\otimes\bm),
\end{equation}
where $\widetilde\pi_1$ is the projection onto the first \emph{letter} and ${\rm tr}\in\N$. 
Moreover, it follows from \eqref{PsiQdef} that
\begin{equation}
\label{red2}
\widetilde\pi_1\Psi_Q = \widetilde\pi_1\Psi_{(\pi_1 Q)^{\otimes\N}}.
\end{equation} 
Since $m_Q = m_{(\pi_1 Q)^{\otimes\N}}= m_{\pi_1 Q}$, (\ref{red1}--\ref{red2}) combine 
with \eqref{Sdefalt} to give
\begin{equation}
\label{upbound}
\begin{split}
&S_*^\mathrm{que}(\hb,\hh,\bb)\cr
&\leq \sup_{ {q\in\cP(\widetilde{E})} \atop h(q \mid q_{\rho,\hm\otimes\bm})<\infty;
\,{m_q<\infty} } 
\left[\int_{\widetilde{E}} q(d\o)[\bb \bo_1+\log\phi_{\hb,\hh}(\ho)] 
- h(q \mid q_{\rho,\hm\otimes\bm})\right.\\
&\qquad\qquad\qquad \left. - (\alpha-1) m_{[q]_{{\rm tr}}} 
h(\widetilde\pi_1\Psi_{[q]_{{\rm tr}}} \mid \hm\otimes\bm) \right],
\end{split}
\end{equation}
where
\begin{equation}
\phi_{\hb, \hh}(\ho) 
= \tfrac12 \left(1 + e^{-2\hb \hh m-2\hb [\ho_1+\dots+\ho_m]}\right)
\end{equation}
and
\begin{equation}
(\widetilde\pi_1\Psi_{[q]_{{\rm tr}}})(d(\ho_1,\bo_1)) 
= \frac{1}{m_{[q]_{{\rm tr}}}} \sum_{m\in\N} [r]_{{\rm tr}}(m) 
\sum_{k=1}^m q_m(E^{k-1},d(\ho_1,\bo_1),E^{m-k}) 
\end{equation}
with the notation
\begin{equation}
q(d\o) = r(m)q_m(d(\ho_1,\bo_1),\dots,d(\ho_m,\bo_m)), 
\qquad \o=((\ho_1,\bo_1),\dots,(\ho_m,\bo_m)),
\end{equation}
and 
\begin{equation}
[r]_{{\rm tr}}(m)=\left\{\begin{array}{ll}
r(m) &\mbox{ if } 1\leq m<{\rm tr}\\
\sum_{n={\rm tr}}^\infty r(n) & \mbox{ if } m={\rm tr}\\
0 & \mbox{otherwise.}
\end{array}
\right.
\end{equation}
Therefore, for $\hh\geq\hat h^{\rm ann}_c(\hb)$, after combining the first two terms in the 
supremum in \eqref{upbound}, as in \eqref{varredann}, we obtain
\begin{equation}
\begin{split}
S_*^\mathrm{que}(\hb,\hh,\bb) 
&\leq \bar M(-\bb)+ \log\hat\cN(\hb, \hh;0) \cr
&\qquad - \inf_{ {q\in\cP(\widetilde{E})} \atop h(q\mid q_{\rho,\hm\otimes\bm})<\infty;
\,{m_q<\infty} } \left[h(q \mid q_{\hb, \hh,\bb})
+ (\alpha-1) m_{[q]_{{\rm tr}}} 
h(\widetilde\pi_1\Psi_{[q]_{{\rm tr}}} \mid \hm\otimes \bm) \right].
\end{split}
\end{equation}
Hence, for $\hh\geq\hat h^{\rm ann}_c(\hb)$, it follows from \eqref{varredann} that 
\begin{equation}
\begin{split}
&S_*^\mathrm{que}(\hb,\hh,\bb) 
\leq S^\mathrm{ann}(\hb,\hh,\bb;0)\cr 
&\qquad - \inf_{ {q\in\cP(\widetilde{E})} \atop 
h(q \mid q_{\rho,\hm\otimes\bm})<\infty;~ {m_q<\infty} }
\left[h(q \mid q_{\hb, \hh,\bb})
+ (\alpha-1) m_{[q]_{{\rm tr}}} 
h(\widetilde\pi_1\Psi_{[q]_{{\rm tr}}} \mid\hm\otimes \bm) \right].
\end{split}
\end{equation}
The first term in the variational formula achieves its minimal value zero at 
$q=q_{\hb,\hh,\bb}$ (or along a minimizing sequence converging to $q_{\hb, \hh,\bb}$). 
However, via some simple computations we obtain 
\begin{equation}
\begin{split}
\widetilde\pi_1\Psi_{[q_{\hb, \hh, \bb}]_{{\rm tr}}}(d\ho_1,d\bo_1)
&= \dfrac{C_{{\rm tr}}(\ho_1,\hb, \hh)}
{A_{{\rm tr}}(\hb,\hh)}\hm(d\ho_1)\bm_{\bb}(d\bo_1)
+ \dfrac{B_{{\rm tr}}(\ho_1,\hb, \hh)}
{A_{{\rm tr}}(\hb,\hh)}\hm(d\ho_1)\bm(d\bo_1),
\end{split}
\end{equation}
where
\begin{equation}
\begin{split}
A_{{\rm tr}}(\hb,\hh)
&= \tfrac12\left(\sum_{n=1}^{{\rm tr-1}}n[1+e^{n[\hM(2\hb)-2\hb \hh]}]\rho(n)
+ {\rm tr}\sum_{n={\rm tr}}^{\infty}[1+e^{n[\hM(2\hb)-2\hb \hh]}]\rho(n)\right),\cr
B_{{\rm tr}}(\ho_1,\hb, \hh)&=\sum_{m=1}^{{\rm tr}-1}(m-1)\rho(m)
\left[1+e^{(m-1)[\hM(2\hb)-2\hb \hh]-2\hb(\ho_1+\hh)}\right]\cr
&\qquad +({\rm tr}-1)\dfrac{1+e^{({\rm tr}-1)[\hM(2\hb)-2\hb \hh]
-2\hb(\ho_1+\hh)}}{1+e^{{\rm tr}[\hM(2\hb)-2\hb \hh]}}
\sum_{m={\rm tr}}^{\infty}\rho(m)
\left[1+e^{m[\hM(2\hb)-2\hb \hh]}\right],\cr
C_{{\rm tr}}(\ho_1,\hb, \hh)
&=\sum_{m=1}^{{\rm tr}-1}\rho(m)\left[1+e^{(m-1)[\hM(2\hb)-2\hb \hh]
-2\hb(\ho_1+\hh)}\right]\cr
&\qquad +\dfrac{1+e^{({\rm tr}-1)[\hM(2\hb)-2\hb \hh]
-2\hb(\ho_1+\hh)}}{1+e^{{\rm tr}[\hM(2\hb)-2\hb \hh]}}
\sum_{m={\rm tr}}^{\infty}\rho(m)\left[1+e^{m[\hM(2\hb)-2\hb \hh]}\right],
\end{split}
\end{equation}
and $\bm_{\bb}(d\bo_1)=e^{\bb \bo_1-\bar M(-\bb)}\bm(d\bo_1)$. Here we use that 
\begin{equation}
q_{\hb, \hh, \bb}(d(\ho_1,\bo_1),\ldots,d(\ho_m,\bo_m))
=r(m)q_{m}(d(\ho_1,\bo_1),\ldots,d(\ho_m,\bo_m))
\end{equation} 
with 
\begin{equation}
\begin{split}
r(m) &=\dfrac{1+e^{m[\hM(2\hb)-2\hb \hh]}}{2\hat\cN(\hb,\hh;0)}\rho(m),\cr
q_m(d(\ho_1,\bo_1),\ldots,d(\ho_m,\bo_m))
&=\dfrac{\left(1+e^{-2\hb\sum_{i=1}^m(\ho_i+\hh)}\right)e^{\bb \bo_1}}
{e^{\bar M(-\bb)}\left(1+e^{m[\hM(2\hb)-2\hb \hh]}\right)}
\prod_{i=1}^m\hm(d\ho_i)\bm(d\bo_i).
\end{split}
\end{equation}
Note that $\widetilde\pi_1\Psi_{[q_{\hb,\hh_c^{\rm ann}(\hb),0}]_{{\rm tr}}}
=(\tfrac12[\hm+\hm_{\hb}])\otimes\bm$, where $\hm_{\hb}(d\ho_1)=e^{-2\hb\ho_1-\hM(2\hb)}
\hm(d\ho_1)$. Thus $\widetilde\pi_1\Psi_{[q_{\hb,\hh,\bb}]_{{\rm tr}}}\neq\hm\otimes \bm$ 
for $\hh\geq\hat h_c^{\rm ann}(\hb)$, and so we have
\begin{equation}
\begin{split}
&S_*^\mathrm{que}(\hb,h_c^\mathrm{ann}(\hb,\bb,\th),\bb)-\th
\leq S^\mathrm{ann}(\hb,h_c^\mathrm{ann}(\hb,\bb,\th),\bb;0)-\th\cr
&-\inf_{ {q\in\cP(\widetilde{E})} \atop h(q\mid q_{\rho,\hm\otimes\bm})<\infty;
\,{m_q<\infty}}\left[h(q \mid q_{\hb,h_c^\mathrm{ann}(\hb,\bb,\th),\bb})
+ (\alpha-1) m_{[q]_{{\rm tr}}} h(\widetilde\pi_1
\Psi_{[q]_{{\rm tr}} }\mid\hm\otimes \bm) \right]\cr
&=\qquad - \inf_{ {q\in\cP(\widetilde{E})} \atop h(q\mid q_{\rho,\hm\otimes\bm})<\infty;
\,{m_q<\infty}}\left[h(q \mid q_{\hb,h_c^\mathrm{ann}(\hb,\bb,\th),\bb})
+ (\alpha-1) m_{[q]_{{\rm tr}}} h(\widetilde\pi_1
\Psi_{[q]_{{\rm tr}}} \mid\hm\otimes \bm) \right]<0.
\end{split}
\end{equation} 
Since $S_*^\mathrm{que}(\hb,h_c^\mathrm{que}(\hb,\bb,\th),\bb)-\th = 0$ and since
$\hh\mapsto S_*^\mathrm{que}(\hb,\hh,\bb)$ is strictly decreasing on $(\hat h_c^\mathrm{ann}
(\hb/\alpha),\infty)$, it follows that $h_c^\mathrm{que}(\hb,\bb,\th)<h_c^\mathrm{ann}
(\hb,\bb,\th)$ for $\th^{\rm ann}_c(\bb)-\log 2<\th\leq\th^{\rm ann}_c(\bb)$.
\end{proof}


\subsection{Proof of Corollary~\ref{hclbstrict}}

\begin{proof}
The map $\hh\mapsto S^{\rm que}(\hb,\hh,\bb;0)$ is strictly decreasing and convex on 
$(\hat h^{\rm ann}_c(\hb/\alpha),\infty)$ (recall Fig.~\ref{fig-copadvarhs}). Therefore, 
for $\th<s^\ast(\hb,\bb,\alpha)$, the $\hh$-value that solves the equation $S^{\rm que}
(\hb,\hh,\bb;0)=\th$ is strictly greater than $\hat h^{\rm ann}_c(\hb/\alpha)$, which 
proves that $h^{\rm que}_c(\hb,\bb,\th)>\hat h^{\rm ann}_c(\hb/\alpha)$. The proof for 
$\th\geq s^\ast(\hb,\bb,\alpha)$ follows from Corollary~\ref{hquehannvarfor} and 
\eqref{ccqueflatp}.  
\end{proof}


\subsection{Proof of Corollary~\ref{critqueann}}

\begin{proof}
(i) Note from \eqref{scopadf} that 
\begin{equation}
\label{s6.37}
S^{\rm ann}(\hb,\hh,\bb;0)-\th=\bar M(-\hb)-\th+\log\left(\tfrac12
\left[1+\sum_{n\in\N}\rho(n)e^{n[\hM(2\hb)-2\hb \hh]}\right]\right).
\end{equation}
Note from \eqref{s6.37} and (\ref{Scopann}--\ref{Sanpin}) that  
$S^{\rm ann}(\hb,\hat h^{\rm ann}_c(\hb),\bb;0)-\th=\bar S^{\rm ann}(\bb;0)-\th$ and 
$S^{\rm ann}(\hb,\hh,\bb;0)$ $-\th^{\rm ann}_c(\bb)=\hat S^{\rm ann}(\hb,\hh;0)$. 
These observations, together with the remark below Theorem~\ref{varflocann}, conclude 
the proof for (i).\\
(ii) Recall from \eqref{Sdefalt} that 
\begin{equation}
\begin{split}
S_*^\mathrm{que}(\hb,\hh,0) &= \sup_{Q\in\cC^\mathrm{fin}} 
\left[\Phi_{\hb, \hh}(Q)-I^\mathrm{que}(Q)\right]\cr
&=\sup_{\hQ\in\hat\cC^\mathrm{fin}} 
\left[\Phi_{\hb, \hh}(\hQ)-\hat I^\mathrm{que}(\hQ)\right]
=\hat S^{\rm que}(\hb,\hh;0).
\end{split}
\end{equation}
The second equality uses the remark below Theorem~\ref{varfloc}. Hence $\tilde
h_c^{\rm que}(\hb,0)=S_\ast^\mathrm{que}(\hb,\hat h^{\rm que}_c(\hb),0)=\hat 
S^\mathrm{que}(\hb,\hat h^{\rm que}_c(\hb);0)=0$ by \cite{BodHoOp11}, Theorem 1.1(ii).
\end{proof}

 
\subsection{Proofs of Corollaries \ref{delocpathprop} and \ref{locpathprop}}

\begin{proof}
The proofs are similar to those of Corollaries 1.6--1.7 in \cite{BodHoOp11}, Section 8. 
For the former, all that is needed is $S^{\rm que}(\hb,\hh,\bb;0)-\th<0$, which holds for 
$(\hb,\hh,\bb,\th)\in {\rm int}(\cD^{\rm que}_1)\cup(\cD^{\rm que}\setminus\cD^{\rm que}_1)$.
\end{proof}


\appendix


\section{Finiteness of $\Phi$}
\label{appA}

\begin{lemma}
\label{pinphifin}
Fix $\delta>0$ and $\bm\in\cP(\bE)$ satisfying \eqref{mgffin}. Then, for all $\bQ\in
\cP^{\rm inv}(\widetilde{\bE}^\N)$ with $h(\pi_{1,1}\bQ\mid\bm)<\infty$, there 
are constants $\gamma \in (\delta^{-1},\infty)$ and $K(\delta,\gamma,\bm)\in (0,\infty)$ 
such that 
\begin{equation}
\label{finphiq}
|\Phi(\bQ)|<\gamma \,h(\pi_{1,1}\bQ\mid\bm)+ K(\delta, \gamma,\bm).
\end{equation} 
\end{lemma}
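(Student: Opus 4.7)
The plan is to apply the standard entropy (Gibbs) variational inequality, which for any probability measures $\nu, \bm$ on $\bE$ and measurable $f\colon \bE \to \R$ with $\int_{\bE} e^f d\bm < \infty$ gives
\begin{equation*}
\int_{\bE} f\, d\nu \leq h(\nu \mid \bm) + \log \int_{\bE} e^f\, d\bm.
\end{equation*}
Setting $\nu = \pi_{1,1}\bQ$ and using $\Phi(\bQ) = \int_{\bE} \bo_1\, (\pi_{1,1}\bQ)(d\bo_1)$, we will apply this with the two choices $f = t \bo_1$ and $f = -t \bo_1$, for a small positive $t$ to be chosen.

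First I would fix $t \in (0,\delta)$ and set $\gamma := t^{-1} > \delta^{-1}$. From \eqref{mgffin}, both $\bar M(t) = \log \int e^{-t\bo_1} \bm(d\bo_1)$ and $\bar M(-t) = \log \int e^{t\bo_1} \bm(d\bo_1)$ are finite. Applying the entropy inequality twice and dividing by $t$ yields
\begin{equation*}
\int_{\bE} \bo_1\, (\pi_{1,1}\bQ)(d\bo_1) \leq \gamma\, h(\pi_{1,1}\bQ \mid \bm) + \gamma\, \bar M(-t),
\end{equation*}
\begin{equation*}
-\int_{\bE} \bo_1\, (\pi_{1,1}\bQ)(d\bo_1) \leq \gamma\, h(\pi_{1,1}\bQ \mid \bm) + \gamma\, \bar M(t).
\end{equation*}
Combining the two bounds gives
\begin{equation*}
|\Phi(\bQ)| \leq \gamma\, h(\pi_{1,1}\bQ \mid \bm) + \gamma\, \bigl[\bar M(t) \vee \bar M(-t)\bigr],
\end{equation*}
so we may take $K(\delta,\gamma,\bm) = \gamma\,[\bar M(\gamma^{-1}) \vee \bar M(-\gamma^{-1})]$, which is finite by \eqref{mgffin}.

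There is no genuine obstacle here; the only point to watch is that the strict inequality in \eqref{finphiq} is obtained by either enlarging $K$ by an arbitrarily small positive constant or by noting that the entropy inequality is strict whenever $\pi_{1,1}\bQ$ is not of the Gibbsian form $e^{\pm t\bo_1}\bm/\int e^{\pm t \bo_1} d\bm$, while in the exceptional case $\Phi(\bQ)$ can be computed explicitly and the bound verified directly. The choice of $\gamma$ only needs $\gamma > \delta^{-1}$, which is exactly what the statement asks for.
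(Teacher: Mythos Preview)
Your argument is correct and substantially shorter than the paper's. The paper proceeds by an explicit level-set decomposition: it slices $\bE$ according to the values of the Radon--Nikodym density $f=d(\pi_{1,1}\bQ)/d\bm$ (sets $A_m=\{\,m-1\le\gamma\log f<m\,\}$) and simultaneously according to the values of $\bo_1$ (sets $B_l=\{\,l-1\le\bo_1<l\,\}$), and then controls the cross terms by combining the bound $f<e^{m/\gamma}$ on $A_m$ with exponential tail estimates $\P_{\bo}(|\bo_1|\ge k)\le C(\delta)e^{-\delta k}$ coming from \eqref{mgffin}. This yields the same conclusion after summing several geometric series.

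Your route bypasses all of this by invoking the entropy--log-moment-generating-function duality directly: the single inequality $\int f\,d\nu\le h(\nu\mid\bm)+\log\int e^{f}\,d\bm$, applied with $f=\pm t\bo_1$, already packages the tail control and the entropy cost together. The only point worth making explicit is that both $\int\bo_1^{+}\,d\nu$ and $\int\bo_1^{-}\,d\nu$ are finite (so that $\Phi(\bQ)$ is well defined); this follows from the same inequality applied to the nonnegative functions $t\bo_1^{\pm}$, since $\int e^{t\bo_1^{\pm}}\,d\bm\le 1+e^{\bar M(\mp t)}<\infty$. With that, your bound $|\Phi(\bQ)|\le\gamma\,h(\pi_{1,1}\bQ\mid\bm)+\gamma[\bar M(\gamma^{-1})\vee\bar M(-\gamma^{-1})]$ is complete, and the strict inequality is obtained by enlarging $K$ slightly, as you note. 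The paper's approach has the minor advantage of being self-contained (no appeal to the variational formula), but yours is the natural and efficient proof.
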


\begin{proof}
The proof comes in 3 steps.

\medskip\noindent
{\bf 1.}
Abbreviate 
\begin{equation}
f(\bo_1) = \frac{d(\pi_{1,1} \bQ)}{d\bm}( \bo_1), \qquad \bo_1 \in \bE.
\end{equation}
Fix $\gamma\in(\delta^{-1},\infty)$. For $m\in\N$ and $l\in\Z$, define
\begin{equation}
\label{setdef} 
\begin{aligned}
A_{m} &= \{\bo_1\in\bE\colon\, m-1\leq \g\log f( \bo_1)<m\},\\
A_{0} &= \{\bo_1\in\bE\colon\, 0\leq f( \bo_1)<1\},\\
B_{l} &= \{\bo_1\in\bE\colon\, l-1\leq \bo_1<l\}.
\end{aligned}
\end{equation} 
Note that the $A_m$'s and the $B_l$'s are pairwise disjoint, and that 
\begin{equation}
\bE = A_{0} \cup \left[\cup_{m\in\N} A_{m}\right],
\quad \bE_+\cup\{0\}=\cup_{l\in\N} B_{l},
\quad  \bE_-=\cup_{l\in-\N_0} B_{l},
\end{equation}
where $\bE_+$ and $\bE_-$ denote the set of positive and negative real numbers in 
$\bE$. Also note that
\begin{equation}
\begin{aligned}
\Phi( \bQ)
&= \int_{ \bE} \;\bo_1\;(\pi_{1,1} \bQ)(d \bo_1)
\leq  \int_{ \bE } (0\vee  \bo_1)\;f( \bo_1)\;\bm(d \bo_1)\\
&=\sum_{m\in\N_0} \int_{A_m } (0\vee  \bo_1)\;f( \bo_1)\;\bm(d \bo_1)
= I +II+III,
\end{aligned}
\end{equation}
where 
\begin{equation}
\label{Ibds}
\begin{aligned}
I &= \int_{A_0\cap[\cup_{l\in\N} B_l] } \bo_1\;f(\bo_1)\;\bm(d\bo_1)
\leq \sum_{l\in\N}l\, \P_{\bo}(B_l ),\cr
II &=  \sum_{m\in\N} \int_{A_m \cap[\cup_{l=1}^{m-1} B_l]}  
\bo_1\;f( \bo_1)\;\bm(d \bo_1),\cr
III &=  \sum_{m\in\N} \int_{A_m\cap[\cup_{l\in\N_0}B_{m+l}]} 
\bo_1\;f( \bo_1)\;\bm(d \bo_1)
\leq \sum_{m\in\N} e^{m/\gamma}\sum_{l\in\N_0} (m+l) \P_{\bo}(B_{m+l}).
\end{aligned}
\end{equation}
The term $I$ follows from the restriction of the $\bm$-integral to the set $A_0\cap \bE_+$.
The terms $II$ and $III$ follow from the restrictions to the sets $\cup_{m\in\N}[A_m\cap
\cup_{l=1}^{m-1} B_l]$ and $\cup_{m\in\N}[A_m\cap\cup_{l\in\N_0} B_{m+l}]$. The bound on 
$I$ uses that $f<1$ on $A_0$ and $\bo_1<l$ on $B_l$. The bound on $III$ follows from the 
fact that $f<e^{m/\gamma}$ on $A_m$ and $\bo_1<m+l$ on $B_{m+l}$. It follows from 
\eqref{Ibds} that 
\begin{equation}
\begin{aligned}
I +III &\leq 2 \sum_{m\in\N} e^{m/\gamma}\sum_{l\in\N_0} (m+l) \P_{\bo}(B_{m+l})\\
& \leq 2 \sum_{m\in\N} e^{m/\gamma}\sum_{l\in\N_0} (m+l) \P_{\bo}(\bo_1\geq m+l-1)\\
& \leq2 C(\delta) \sum_{m\in\N} e^{m/\gamma}\sum_{l\in\N_0} (m+l) e^{-\delta(m+l-1)}\\
& \leq2 C(\delta)\,e^{\d} \sum_{m\in\N} e^{-m(\delta-1/\gamma)}\sum_{l\in\N_0} (m+l) 
e^{-\delta l}
=k_+(\delta,\gamma,\bm)<\infty,
\end{aligned}
\end{equation}
where the third inequality uses \eqref{mgffin}. Moreover, use that $\bo_1<m-1\leq \g\log f$ 
on $ A_{m} \cap \cup_{l=1}^{m-1} B_{l}$, to estimate
\begin{equation}
\begin{split}
II &\leq \g  \sum_{m\in\N} \int_{\bo_1\in A_{m} \cap [\cup_{l=1}^{m-1} B_{l}]} 
f(\bo_1)\log f(\bo_1)\,\bm(d \bo_1)\cr
&\leq \g  \sum_{m\in\N} \int_{\bo_1\in A_{m}} 
f(\bo_1)\log f(\bo_1)\,\bm(d \bo_1)\cr
&= \g  \int_{ \bE\backslash A_{0}}
f(\bo_1)\log f(\bo_1)\,\bm(d \bo_1) \cr
&=\g h(\pi_{1,1} \bQ\mid\bm)-\g\int_{ A_{0}}
f(\bo_1)\log f(\bo_1)\,\bm(d \bo_1)\cr
&\leq \g h(\pi_{1,1} \bQ\mid\bm)+\g\,e^{-1}<\infty,
\end{split}
\end{equation}
where the third inequality uses that $f\log f\geq -e^{-1}$ on $A_0$, and the second 
equality that
\begin{equation}
h(\pi_{1,1} \bQ|\bm) =  \int_{\bE\backslash A_{0}}
f(\bo_1)\log f(\bo_1)\,\bm(d \bo_1)+ \int_{ A_{0}}
f(\bo_1)\log f(\bo_1)\,\bm(d \bo_1)<\infty.
\end{equation}
Put 
\begin{equation}
\begin{split}
K_+(\delta,\g,\bm)=k_+(\delta,\g,\bm)+\g\,e^{-1}.
\end{split}
\end{equation}

\medskip\noindent
{\bf 2.} Similarly, we have
\begin{equation}
\begin{aligned}
\Phi(\bQ)
&= \int_{ \bE} \;\bo_1\;(\pi_{1,1} \bQ)(d \bo_1)
\geq  \int_{ \bE } (0\wedge  \bo_1)\;f( \bo_1)\;\bm(d \bo_1)\\
&=\sum_{m\in\N_0} \int_{A_m } (0\wedge  \bo_1)\;f( \bo_1)\;\bm(d \bo_1)
= I' +II'+III',
\end{aligned}
\end{equation}
where
\begin{equation} 
\begin{aligned}
I' &= \int_{A_0\cap[\cup_{l\in-\N_0} B_l] } \bo_1\;f(\bo_1)\;\bm(d \bo_1)
\geq \sum_{l\in-\N_0}(l-1)\, \P_{\bo}(B_l ),\cr
II' &= \sum_{m\in\N} \int_{A_m \cap[\cup_{l=-m+1}^{0} B_l]} 
\bo_1\;f(\bo_1)\;\bm(d \bo_1),\cr
III' &=  \sum_{m\in\N} \int_{A_m\cap[\cup_{l\in-\N_0}B_{l-m}] } 
\bo_1\;f(\bo_1)\;\bm(d \bo_1) \geq \sum_{m\in\N} e^{m/\gamma}
\sum_{l\in-\N_0} (l-m-1) \P_{\bo}(B_{l-m}).
\end{aligned}
\end{equation}
The bounds on $I'$  and $III'$ use that $\bo_1\geq l-1$ on $B_l$ and $f<e^{m/\g}$ on 
$A_m$. Note that  
\begin{equation} 
\begin{aligned}
I'+III' &\geq2 \sum_{m\in\N_0} e^{m/\gamma}\sum_{l\in-\N_0} (l-m-1) \P_{\bo}(B_{l-m})\\
& \geq2\sum_{m\in\N_0} e^{m/\gamma}\sum_{l\in-\N_0} (l-m-1) \P_{\bo}(\bo_1\leq l-m)\\
&\geq2  C(\delta)\sum_{m\in\N_0} e^{m/\gamma}\sum_{l\in-\N_0} (l-m-1) e^{\delta(l-m)}
=k_-(\delta,\g,\bm)>-\infty.
\end{aligned}
\end{equation}
Also use that $\bo_1\geq-m\geq -[\g\log f+1]~$ on $~A_{m} \cap \cup_{l=-m+1}^{0} B_{l}$, 
to estimate
\begin{equation}
\begin{split}
II' &\geq   -\sum_{m\in\N} \int_{\bo_1\in A_{m} \cap [\cup_{l=-m+1}^{0} B_{l}]} 
f( \bo_1)[\g\log f( \bo_1)+1]\,\bm(d \bo_1)\cr
&\geq -\g  \sum_{m\in\N} \int_{ A_{m}} 
f( \bo_1)\log f( \bo_1)\,\bm(d \bo_1)-1\cr
&= -\g  \int_{ \bE\backslash A_{0}}
f( \bo_1)\log f( \bo_1)\,\bm(d \bo_1) -1\cr
&=-\g h(\pi_{1,1} \bQ\mid\bm)+\g\int_{ A_{0}}
f( \bo_1)\log f( \bo_1)\,\bm(d \bo_1)-1\cr
&\geq -[\g h(\pi_{1,1} \bQ\mid\bm)+\g e^{-1}+1]>-\infty.
\end{split}
\end{equation}

\medskip\noindent
{\bf 3.} Put $K_-(\delta,\g,\bm)=1+\g e^{-1}-k_-(\delta,\g,\bm)$. Then the claim follows 
with $K(\delta,\g,\bm)=K_+(\delta,\g,\bm)\vee K_-(\delta,\g,\bm)$.
\end{proof}

For the sake of completeness we state the follow finiteness results for $\Phi_{\hb, \hh}$ 
that were proved in \cite{BodHoOp11}, Appendix A.

\begin{lemma}
\label{mainlemmacop}
Fix $\hb, \hh,g>0$. Then $\ho$-a.s.\ there exists a $K(\ho,\hb, \hh,g)<\infty$ such 
that, for all $N\in\N$ and for all sequences $0=k_0 < k_1 < \cdots < k_N < \infty$,
\begin{equation}
\label{sumbd}
-g k_N + \sum_{i=1}^N \log\phi_{\hb, \hh}\big(\ho_{(k_{i-1},k_i]}\big)
\leq K(\ho,\hb, \hh,g)N,
\end{equation}
where $\ho_{(k_{i-1},k_i]}$ is the word cut out from $\ho$ by the $i$th excursion interval 
$(k_{i-1},k_i]$.
\end{lemma}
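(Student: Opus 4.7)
The plan is to combine a fractional-moment (Chernoff) estimate with a Borel--Cantelli argument. The key input is a uniform moment bound: I would fix $\lambda \in (0,1)$ small enough that $\alpha(\lambda) := 2\hb\hh\lambda - \hM(2\hb\lambda) > 0$, which is possible since $\hM(0)=\hM'(0)=0$ and $\hM''(0)=1$, so that $\alpha(\lambda) = 2\hb\hh\lambda - O(\lambda^2)$ near $\lambda=0$. Combining the sub-additivity $(1+x)^\lambda \leq 1 + x^\lambda$ ($x\geq 0$, $\lambda\in(0,1)$) with the i.i.d.\ factorisation of $\E[e^{-2\hb\lambda \sum_k \ho_k}]$ yields
\begin{equation*}
\E\big[\phi_{\hb,\hh}(\ho_{(j,j+n]})^\lambda\big]
\leq 2^{-\lambda}\big(1 + e^{-n\alpha(\lambda)}\big)
\leq 2^{1-\lambda},
\qquad j\geq 0,\ n\geq 1.
\end{equation*}

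Next I would fix a partition $0=k_0<k_1<\cdots<k_N$ with $k_N=M$. Since the intervals $I_i=(k_{i-1},k_i]$ are disjoint and $\ho$ is i.i.d., the variables $\phi_{\hb,\hh}(\ho_{I_i})$ are independent, and Markov's inequality applied to the $\lambda$-th moment of $\exp\big[\sum_i \log\phi_{\hb,\hh}(\ho_{I_i})\big]$ gives, for any $C>0$,
\begin{equation*}
\P\Big(-g k_N + \sum_{i=1}^N \log\phi_{\hb,\hh}(\ho_{I_i}) > CN\Big)
\leq 2^{(1-\lambda)N}\, e^{-\lambda g k_N - \lambda CN}.
\end{equation*}
Union-bounding over the $\binom{M-1}{N-1}$ partitions with $N$ blocks and $k_N=M$, and using the identity $\sum_{M\geq N}\binom{M-1}{N-1}e^{-\lambda g M} = e^{-\lambda g N}(1-e^{-\lambda g})^{-N}$, the total probability summed over $N$, $M$ and all partitions is bounded by $\sum_{N\geq 1} r^N$ with $r = 2^{1-\lambda} e^{-\lambda(C+g)}/(1-e^{-\lambda g})$, which is finite once $C=C(g,\hb,\hh,\lambda)$ is chosen large enough that $r<1$.

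Borel--Cantelli then implies that, $\ho$-almost surely, only finitely many partitions $(N,k_0,\ldots,k_N)$ violate the inequality $-gk_N + \sum_i \log\phi_{\hb,\hh}(\ho_{I_i}) \leq CN$. Setting $B(\ho)$ to be the (a.s.\ finite) maximum of the left-hand side over this exceptional collection, or zero if it is empty, every admissible sequence satisfies
\begin{equation*}
-gk_N + \sum_{i=1}^N \log\phi_{\hb,\hh}(\ho_{I_i})
\leq \max\big(CN,\,B(\ho)\big)
\leq \big(C+B(\ho)\big)\,N
\qquad \text{for all } N\geq 1,
\end{equation*}
which establishes the lemma with $K(\ho,\hb,\hh,g) := C + B(\ho)$.

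The main obstacle is the simultaneous selection of $\lambda$ and $C$: $\lambda$ must lie in the interval $(0,\lambda_0)$ on which $\alpha(\lambda)>0$ (determined by $\hb,\hh$ and $\hM$), and $C$ must then be chosen large enough that $r<1$. Both conditions can be met because $r\to 0$ as $C\to\infty$ for any fixed admissible $\lambda$. What makes the argument robust for \emph{every} $g>0$ is that the factor $e^{-\lambda g k_N}$ produced by the Chernoff bound exactly absorbs the combinatorial blow-up $(1-e^{-\lambda g})^{-N}$ from counting partitions, leaving a geometric series in $N$ that is summable regardless of how small $g$ is.
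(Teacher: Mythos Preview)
Your proof is correct. The paper itself does not prove this lemma; it merely states it and cites \cite{BodHoOp11}, Appendix~A, for the proof. Your fractional-moment/Chernoff bound combined with a union bound over partitions and Borel--Cantelli is exactly the natural approach, and is in the same spirit as the fractional-moment estimates used elsewhere in the paper (e.g., \eqref{braceterm} and \eqref{simproof}). The computation of $\sum_{M\geq N}\binom{M-1}{N-1}e^{-\lambda g M}=e^{-\lambda g N}(1-e^{-\lambda g})^{-N}$ and the resulting geometric summability in $N$ are clean and correct, and your handling of the finitely many exceptional partitions via the random constant $B(\ho)$ closes the argument without gaps.
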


\begin{lemma}
\label{finitephicop} 
Fix $\hb, \hh>0$, $\rho\in\cP(\N)$ and $\hm\in\cP(\R)$ satisfying \eqref{rhocond} and 
\eqref{mgffin}. Then, for all $\hQ\in\cP^\mathrm{inv}(\widetilde{\hE}^\N)$ 
with $h(\pi_1 \hQ \mid q_{\rho,\hm})<\infty$, there are finite constants $C>0$, 
$\gamma>2\hb/C$ and $K=K(\hb, \hh,\rho,\hm,\gamma)$ such that 
\begin{equation}
\Phi_{\hb, \hh}(\hQ)\leq \gamma\; h(\pi_1 \hQ|q_{\rho,\hm})+K.
\end{equation}
\end{lemma}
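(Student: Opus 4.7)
The plan is to apply the Donsker--Varadhan variational formula for relative entropy together with a suitable tilt parameter. The starting point is the pointwise bound
\[
\log \phi_{\hb,\hh}(\ho) = \log(1+e^{u(\ho)}) - \log 2 \leq u(\ho)_+,
\]
where $u(\ho) := -2\hb \hh \tau_1 - 2\hb \sum_{k=1}^{\tau_1}\ho_k$, obtained from the elementary inequality $\log(1+e^x) \leq \log 2 + x_+$. Since $\log \phi_{\hb,\hh} \geq -\log 2$ as well, $\Phi_{\hb,\hh}(\hQ)$ is well-defined in $[-\log 2, +\infty]$, and proving the lemma reduces to bounding $\int u_+ \,d(\pi_1 \hQ)$ in terms of $h(\pi_1 \hQ \mid q_{\rho,\hm})$.

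For the main estimate I would invoke the entropy inequality: for any measurable $f$ with $\int e^f \,dq_{\rho,\hm} < \infty$,
\[
\int f \,d(\pi_1 \hQ) \leq h(\pi_1 \hQ \mid q_{\rho,\hm}) + \log \int e^f \,dq_{\rho,\hm}.
\]
Taking $f = u_+/\gamma$ for $\gamma > 0$, using $e^{x_+} \leq 1 + e^x$, and exploiting the i.i.d.\ product structure of $q_{\rho,\hm}$, I would arrive at
\[
\Phi_{\hb,\hh}(\hQ) \leq \gamma\, h(\pi_1 \hQ \mid q_{\rho,\hm}) + \gamma \log \left(1 + \sum_{n\in\N} \rho(n)\, e^{n\left[\hM(2\hb/\gamma) - 2\hb\hh/\gamma\right]}\right).
\]

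The crux is to select $\gamma$ so that the series converges. Since $\hM$ is finite on all of $\R$ with $\hM(0) = 0$ and $\hM'(0) = -\E(\ho_1) = 0$ (using the zero-mean assumption in \eqref{mgffin}), the ratio $\hM(t)/t$ is continuous and tends to $0$ as $t \downarrow 0$. Hence for any $\hh > 0$ there exists a threshold $C = C(\hh, \hm) > 0$ such that $\hM(t) < \hh\, t$ for $0 < t < C$. Choosing $\gamma > 2\hb/C$ then makes the exponent $n[\hM(2\hb/\gamma) - 2\hb\hh/\gamma]$ strictly negative, so the series is summable (in fact converges geometrically), and setting $K$ equal to $\gamma$ times the logarithm of this finite sum yields the claim. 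I do not anticipate any serious obstacle beyond this calibration of $\gamma$, which is essentially forced by the requirement that the reference-measure exponential moment of $u_+/\gamma$ be finite.
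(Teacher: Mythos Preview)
Your proof is correct. The entropy (Donsker--Varadhan) inequality applied with $f=u_+/\gamma$, together with the calibration of $\gamma$ via $\hM(t)/t\to 0$ as $t\downarrow 0$, yields exactly the stated bound; the constant $C$ in the lemma is precisely the threshold below which $\hM(t)<\hh\,t$, so that $\gamma>2\hb/C$ forces the reference-measure moment $\int e^{u_+/\gamma}\,dq_{\rho,\hm}$ to be finite.

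The paper does not actually prove this lemma here; it is quoted from \cite{BodHoOp11}, Appendix~A. Judging by the proof it does give for the companion Lemma~\ref{pinphifin}, the cited argument most likely proceeds by a hands-on level-set decomposition, partitioning the integral according to the size of the Radon--Nikodym density $d(\pi_1\hQ)/dq_{\rho,\hm}$ and of $\log\phi_{\hb,\hh}$, and then controlling the off-diagonal pieces by a Chernoff-type exponential tail bound (compare the use of \cite{BodHoOp11}, Lemma~D.1, in Section~\ref{Sec4.2.2}). Your route through the variational formula for relative entropy short-circuits that decomposition: it is cleaner and makes transparent why the threshold $\gamma>2\hb/C$ arises, namely as the exact condition for finiteness of the exponential moment under $q_{\rho,\hm}$. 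The level-set argument, in turn, is more flexible when only weaker tail control than a full exponential moment is available.
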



\section{Application of Varadhan's lemma}
\label{appB}

In this appendix we prove \eqref{seqsbar} and the claim above it. This was used in
Section~\ref{S3} to complete the proof of Theorem~\ref{varfloc}.

\begin{lemma}
\label{varlem}
For every $\hb,\hh>0$ and $\bb\geq 0$,
\begin{equation}
\label{vareq}
s^\mathrm{que}(\hb,\hh,\bb;g) = S^\mathrm{que}(\hb,\hh,\bb;g) \quad \forall\, g\in\R,
\end{equation}
 where $s^\mathrm{que}(\hb,\hh,\bb;g)$ is the $\omega$-a.s.\ 
constant limit defined in \eqref{copadSlim}, and $S^\mathrm{que}(\hb,\hh,\bb;g)$ is as 
in \eqref{Sdef}. In particular, the map $g\mapsto S^\mathrm{que}(\hb,\hh,\bb;g)$ is finite 
on $(0,\infty)$ and infinite on $(-\infty,0)$.
\end{lemma}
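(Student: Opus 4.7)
The plan is to apply Varadhan's lemma to the quenched LDP of Theorem~\ref{qLDP} for the empirical word process $R_N^\omega$ under the tilted excursion law $P^*_g$, with the upper-semicontinuous but unbounded functional $F(Q) = \bb \Phi(Q) + \Phi_{\hb,\hh}(Q)$. This is a direct adaptation of the copolymer argument of \cite{BodHoOp11}, Theorem 3.1, in which the new pinning functional $\bb \Phi$ is handled via the exponential moment hypothesis \eqref{mgffin} on $\bm$.

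\textbf{Main case $g>0$.} Under $P^*_g$ the excursion lengths have exponentially decaying tails, so by Theorem~\ref{qLDP} together with identity \eqref{Iannz}, $\omega$-a.s.\ the process $R_N^\omega$ satisfies a LDP with rate function $I_g^\mathrm{que}(Q) = I^\mathrm{ann}(Q) + \log \mathcal{N}(g) + g m_Q$ on $\cR$, and $+\infty$ off $\cR$. Lemma~\ref{rhozre} forces $m_Q<\infty$ whenever this rate function is finite, so the effective domain reduces to $\cC^\mathrm{fin}\cap \cR$. Granted Varadhan's lemma,
\begin{equation*}
\limsup_{N \to \infty} \frac{1}{N} \log E^*_g\!\bigl[\exp\bigl(N F(R_N^\omega)\bigr)\bigr]
= \sup_{Q \in \cR} \bigl[F(Q) - I_g^\mathrm{que}(Q)\bigr] \qquad \omega\text{-a.s.,}
\end{equation*}
and reorganizing $\log\mathcal{N}(g)$ as in the definition \eqref{copadSlim} of $s^\mathrm{que}$ reproduces the variational formula \eqref{Sdef} for $S^\mathrm{que}(\hb,\hh,\bb;g)$.

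\textbf{Main obstacle.} The hard part is verifying Varadhan's tail condition, since $F$ is unbounded in both its $\bb\Phi$ and $\Phi_{\hb,\hh}$ components. Upper semi-continuity of $F$ on sublevel sets of $I^\mathrm{ann}$ follows from Lemmas~\ref{pinphifin} and \ref{finitephicop}. For the required bound
\begin{equation*}
\limsup_{N\to\infty} \frac{1}{N} \log E^*_g\!\bigl[\exp\bigl(c N F(R_N^\omega)\bigr)\bigr] < \infty \quad \text{for some } c > 1,
\end{equation*}
I split the product $e^{cNF(R_N^\omega)} = e^{cN\bb\Phi(R_N^\omega)}\cdot e^{cN\Phi_{\hb,\hh}(R_N^\omega)}$ by H\"older. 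For the copolymer factor, Lemma~\ref{mainlemmacop} applied at a slightly smaller tilt $g'<g/(qc)$ yields, $\omega$-a.s., the deterministic bound $\sum_i \log\phi_{\hb,\hh}(\ho_{I_i}) \leq g' k_N + K(\omega,\hb,\hh,g')N$ uniformly in the cut points; evaluating the $P^*_g$ expectation of $e^{pc\sum_i\log\phi_{\hb,\hh}(\ho_{I_i})}$ then yields a finite normalized exponential moment. For the pinning factor, a Borel--Cantelli argument based on $\E\bigl[E^*_g[e^{pc\bb\sum_i \bo_{k_i}}]\bigr] = e^{N\bar M(pc\bb)}$ (finite by \eqref{mgffin}, since the $\bo_{k_i}$ occupy distinct positions and are independent of the renewal structure) produces an $\omega$-a.s.\ upper bound $\bar M(pc\bb)+\varepsilon$ on the normalized log. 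The LDP \emph{lower} bound is standard once upper semi-continuity of $F$ is in hand: for $Q \in \cC^\mathrm{fin}\cap \cR$ with finite rate, choose shrinking neighborhoods on which $F > F(Q) - \varepsilon$, apply the quenched LDP lower bound on these open sets, and send $\varepsilon \downarrow 0$.

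\textbf{Remaining cases.} For $g<0$, $\mathcal{N}(g)=\infty$ forces $s^\mathrm{que}(\hb,\hh,\bb;g)=\infty$ directly from \eqref{copadSlim}. To match $S^\mathrm{que}(\hb,\hh,\bb;g)=\infty$, I test the variational formula \eqref{Sdef} at the product measure $Q_L = q_L^{\otimes\N}$, with $q_L$ concentrated on words of deterministic length $L$ and letters i.i.d.\ $\hm\otimes\bm$: then $Q_L \in \cC^\mathrm{fin}\cap\cR$ with $m_{Q_L}=L$, while $I^\mathrm{ann}(Q_L)$, $\Phi(Q_L)$ and $\Phi_{\hb,\hh}(Q_L)$ grow at most linearly in $L$ with an $L$-coefficient strictly smaller than $|g|$, so that $-g m_{Q_L} + F(Q_L) - I^\mathrm{ann}(Q_L) \to +\infty$ as $L\to\infty$. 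The boundary $g=0$ is treated either by the same Varadhan argument applied to the $g=0$ case of Theorem~\ref{qLDP} using the truncation approximation \eqref{truncapproxcont} (which yields the alternative formula \eqref{Sdefalt} of Theorem~\ref{varfloc}(ii) directly), or by passage to the limit $g\downarrow 0$ using lower semi-continuity of $g\mapsto S^\mathrm{que}(\hb,\hh,\bb;g)$ together with convexity and continuity on $(0,\infty)$.
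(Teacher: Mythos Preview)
Your overall strategy---Varadhan's lemma plus a H\"older split to control the tails of the copolymer and pinning factors separately---is exactly the paper's approach, and your handling of the tail moment condition via Lemma~\ref{mainlemmacop} and the Borel--Cantelli bound on the pinning term is correct. But there are two genuine gaps.

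First, the functional $F=\bb\Phi+\Phi_{\hb,\hh}$ is \emph{neither} upper nor lower semi-continuous when $\bb>0$: the pinning integrand $\bo_1$ is unbounded in both directions, and Lemmas~\ref{pinphifin}--\ref{finitephicop} give only entropy-controlled growth bounds, not any semi-continuity. So one cannot invoke Varadhan's lemma directly, and your lower-bound argument (``choose neighborhoods on which $F>F(Q)-\varepsilon$'') actually requires \emph{lower} semi-continuity, which also fails. The paper resolves this by truncation on both sides: for the upper bound it replaces $\Phi$ and $\Phi_{\hb,\hh}$ by their $\wedge M$ truncations (which \emph{are} upper semi-continuous, so \cite[Lemma~4.3.6]{DeZe98} applies), and for the lower bound it replaces $\Phi$ by $\Phi^{-M}(Q)=\int(\bo_1\vee(-M))\,(\bar\pi_{1,1}Q)(d\bo_1)$ (which is lower semi-continuous). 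Your H\"older estimates are then used not to verify a tail condition for Varadhan, but to show that the truncation errors vanish as $M\to\infty$ and the H\"older exponent $p\to1$; the continuity-in-$p$ argument (your Hamiltonian rescaled by $p$) is needed to recover the correct limit.

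Second, for $g<0$ you cannot conclude $s^{\rm que}=\infty$ ``directly'' from $\cN(g)=\infty$: for $g<0$ the tilted law $\rho_g$ is undefined, so the decomposition \eqref{Slimdef} does not apply, and in the direct expression \eqref{FNdef} for $F_N$ the pinning factor $e^{\bb\sum\bo_{k_i}}$ can be small. The paper handles this with a reverse H\"older inequality to separate the (bounded-below) copolymer-plus-length part from the pinning part, combined with a truncation $k_N(L)$ of the total length; only then does $\cN_L(pg)\to\infty$ force the conclusion. Finally, your treatment of $g=0$ is too quick: passing to the limit $g\downarrow 0$ gives $\lim_{g\downarrow 0}S^{\rm que}(g)=S^{\rm que}(0)$ by lower semi-continuity, but identifying this with the alternative formula $S_*^{\rm que}$ over all of $\cC^{\rm fin}$ (needed for Theorem~\ref{varfloc}(ii)) requires the paper's finite-alphabet discretization together with the approximation Lemma~\ref{squeat0rel}, which lets one replace $I^{\rm ann}$ on $\cR$ by $I^{\rm que}$ on $\cC^{\rm fin}$.
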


\begin{proof}
Throughout the proof $\hb,\hh>0$, $\bb\geq 0$ and $\th\in\R$ are fixed. The proof comes 
in 3 steps, where we establish the equality in \eqref{varlem} for the cases $g<0$, $g=0$ 
and $g>0$ separately.

\medskip\noindent
{\bf Step 1.} 
For $g<0$ the proof of \eqref{varlem} is given in two steps.

\medskip\noindent
{\bf 1a.} In this step we show that $S^\mathrm{que}(\hb,\hh,\bb;g)=\infty$ when 
$g<0$. Fix $L\in\N$ and let $Q^L=(q^L_{\hm\otimes\bm})^{\otimes\N}$, with 
\begin{equation}
q^L_{\hm\otimes\bm}(d\o_1,\ldots,d\o_n)
=\delta_{Ln}(\hm\otimes\bm)^{\otimes n}(d\o_1,\ldots,d\o_n) 
\end{equation}
and $(\o_1,\ldots,\o_n)=((\ho_1,\bo_1),\ldots,(\ho_n,\bo_n))\in E^n.$ It follows 
from \eqref{Sdef} that 
\begin{equation}
S^\mathrm{que}(\hb,\hh,\bb;g)
\geq \bb \Phi(Q^L)+\Phi_{\hb,\hh}(Q^L)-I^{\rm ann}(Q^L)-gL
\geq -\log 2-gL+\log \rho(L).
\end{equation}
The second inequality uses that $\Phi(Q^L)=0$, $I^{\rm ann}(Q^L)=-\log\rho(L)$ and 
$\Phi_{\hb,\hh}(Q^L)\geq -\log 2$. Letting $L\to\infty$ and using that $\rho$ has a 
polynomial tail by \eqref{rhocond}, we get the claim.

\medskip\noindent
{\bf 1b.} In this step we show that $s^\mathrm{que}(\hb,\hh,\bb;g)=\infty$ when $g<0$. 
The proof follows from a moment estimate. We start by showing that, for each $\bb\in\R$, 
\begin{equation}
\label{mest}
\limsup_{N\to\infty} \frac1N \log E^\ast_0\left(e^{N\bb\Phi(R^{\bo}_N)}\right)
\leq \tM(-\bb)
\end{equation}
(recall \eqref{mgffin}). Indeed, for any $\bb\in\R$, by the Markov inequality,
\begin{equation}
\label{B5}
\begin{split}
\P_{\bo}\left(\frac1N\log E^\ast_0\left(e^{N\bb\Phi(R^{\bo}_N)}\right)
\geq \tM(-\bb)+\epsilon\right)
&= \P_{\bo}\left( E^\ast_0\left(e^{N\bb\Phi(R^{\bo}_N)}\right)
\geq e^{N( \tM(-\bb)+\epsilon)}\right)\cr
&\leq e^{-N\tM(-\bb)}e^{-\epsilon N}
\E_{\bo}\left(E^\ast_0\left(e^{N\bb\Phi(R^{\bo}_N)}\right)\right)\cr
&=e^{-N\tM(-\bb)}e^{-\epsilon N}
E_0^\ast\left[\E_{\bo}\left(e^{\bb\sum_{i=1}^N\bo_{k_{i-1}}}\right)\right]
=e^{-\epsilon N}.
\end{split}
\end{equation}
The claim therefore follows from the Borel-Cantelli lemma. Note from \eqref{B5}  that 
\eqref{mest} holds if we replace $E^\ast_0$ with $E^\ast_g$, $g\geq0.$

Let $\tau_i$ be the length of the $i$-th word, let $L\in\N$, and put
\begin{equation}
k_N=\sum_{i=1}^N\tau_i\quad \text{and}\quad k_N(L)
=\sum_{i=1}^N\left[\tau_i~1_{\{\tau_i<L\}}+ L\,1_{\{\tau_i\geq L\}}\right].
\end{equation}
For any $-\infty<q<0<p<1$ with $p^{-1}+q^{-1}=1$ and $g<0$, it follows from \eqref{FNdef} 
that 
\begin{equation}
\label{FNdef1}
\begin{aligned}
e^{\th N}F_N^{\hb,\hh,\bb,\th,\o}(g) 
&=E^\ast_0\left(\exp\left [-gk_N+N\left(\Phi_{\hb,\hh}(R_N^{\o})+\bb\,
\Phi(R_N^{\o})\right)\right]\right),\\
&\geq \left(\tfrac12\right)^N\,E^\ast_0\left(\exp\left [-gk_N(L)+N\bb\,
\Phi(R_N^{\o})\right]\right)\\
&\geq \left(\tfrac12\right)^N\,E^\ast_0\left(e^{-g pk_N(L)}\right)^{1/p}
E^\ast_0\left(e^{Nq\bb\,\Phi(R_N^{\o})}\right)^{1/q} \\
& =\left(\tfrac12 \right)^N\,\cN_L(pg)^{N/p}
E^\ast_0\left(e^{Nq\bb\,\Phi(R_N^{\o})}\right)^{1/q},  
\end{aligned}
\end{equation}
where 
\begin{equation}
\cN_L(g)=\sum_{n=1}^{L-1}\rho(n)e^{-ng}+e^{-Lg}\sum_{n\geq L}\rho(n).
\end{equation}
The first inequality in \eqref{FNdef1} uses that $\Phi_{\hb,\hh}(Q)\geq-\log 2$, $k_N\geq
k_N(L)$ and $g<0$. The second inequality follows from the reverse H\"older inequality 
with the above choice of $p$ and $q$. Note that $\cN_L(g)$ is finite for $g\in\R$ and 
$\lim_{L\to\infty}\cN_L(g)=\cN(g)$. It therefore follows from \eqref{copadSlim}, 
\eqref{mest} and \eqref{FNdef1} that 
\begin{equation}
\label{Slimlb}
\begin{split}
s^{\rm que}(\hb,\hh,\bb;g)
&=\limsup_{N\to\infty}\frac1N \log \left(e^{N \th}
F_N^{\hb,\hh,\bb,\th,\o}(g)\right)\cr
&\geq -\log 2+\frac1p\log\cN_L(pg)+\frac1q \tM(-\bb q).
\end{split}
\end{equation}
Letting $L\to\infty$, we get from \eqref{rhoz} that $s^{\rm que}(\hb,\hh,\bb;g)
=\infty$, since $\cN(pg)=\infty$ for $g\in(-\infty,0)$.

\medskip\noindent
{\bf Step 2.} In this step, which is divided into 2 substeps, we consider the case 
$g>0$.

\medskip\noindent
{\bf 2a. Lower bound:}
For $M>0$, define 
\begin{equation}
\label{pintrunc}
\begin{split}
\Phi^{-M}(Q)=\int_{\bE}(\bar\pi_{1,1}Q)(d\bo_1)[\bo_1\vee (-M)].
\end{split}
\end{equation}
Note that $\Phi^{-M}$  is lower semi-continuous and that 
\begin{equation}
\label{phiineq}
\begin{split}
\bb\Phi^{-M}(Q)+\Phi_{\hb,\hh}(Q)\leq \bb\Phi(Q)+\Phi_{\hb,\hh}(Q)
-\bb\int_{\bo_1<-M} \bo_1\,(\bar\pi_{1,1} Q)(d\bo_1).
\end{split}
\end{equation}
Therefore, for any $p,q>1$ with $1/p+1/q=1$, it follows from the H\"older inequality 
that 
\begin{equation}
\label{smineq}
\begin{split}
&\frac{1}{N} \log E_g^\ast\left(e^{N\left(\Phi_{\hb,\hh}(R_N^\o)
+\bb\,\Phi^{-M}(R^\o_N)\right)}\right)\cr
&\leq \frac{1}{pN} \log E_g^\ast\left(e^{pN\left[\Phi_{\hb,\hh}(R_N^\o)
+\bb\,\Phi(R^\o_N)\right]}\right)
+ \frac{1}{qN} \log  E_g^\ast\left(e^{-q\,\bb\sum_{i=1}^N \bo_{k_{i}} 
1_{\{\bo_{k_{i}}<-M\}}}\right).
\end{split}
\end{equation}
The rest of the proof consists of taking the appropriate limits and showing that 
the left-hand side of \eqref{smineq} is bounded from below by $S^{\rm que}(\hb,
\hh,\bb;g)$, while the second term in the right-hand side tends to zero and the 
first term tends to $s^{\rm que}(\hb,\hh,\bb;g)$.

Let us start with the second term in the right-hand side of \eqref{smineq}. Note 
from \eqref{rhoz} that
\begin{equation}
\label{trunerr}
\begin{split}
&\limsup_{N\to\infty}\frac{1}{qN}\log  E_g^\ast\left(e^{-q\,
\bb\sum_{i=1}^N \bo_{k_{i}}1_{\{\bo_{k_{i}<-M\}}}}\right)\cr
&=-\frac1q\log\cN(g)+ \limsup_{N\to\infty}\frac{1}{qN}\log 
E_0^\ast\left(e^{-gk_N-q\,
\bb\sum_{i=1}^N \bo_{k_{i}}1_{\{\bo_{k_{i}<-M\}}}}\right)\cr
&\leq -\frac1q\log\cN(g)+ \frac{1}{2q}\log\cN(2g)
+\limsup_{N\to\infty}\frac{1}{2qN}\log E_0^\ast\left(e^{-2q\,
\bb\sum_{i=1}^N \bo_{k_{i}}1_{\{\bo_{k_{i}<-M\}}}}\right)\cr
&\leq-\frac1q\log\cN(g)+\frac{1}{2q}\log\cN(2g)+ \frac{1}{2q} \log 
\int_{\bE}e^{-2q\bb\bo_1 1_{\{\bo_{1}<-M\}}}\bm(d\bo_1).
\end{split}
\end{equation}
The first inequality uses the Cauchy-Schwarz inequality, the second inequality uses 
\eqref{mest}. Note from \eqref{mgffin} that the above bound tends to zero upon when
$M\to\infty$ followed by $q\to\infty$.
 
For the first term in the right-hand side of \eqref{smineq} we proceed as follows. 
Note from Lemma~\ref{mainlemmacop} that $\ho$-a.s. 
\begin{equation}
\label{copcon}
-gk_N+pN\Phi_{\hb,\hh}(R_N^x)\leq NK(\ho,p,\hb,\hh,g),
\end{equation}
where we use that 
\begin{equation}
-gk_N+pN\Phi_{\hb,\hh}(R_N^\o)
=p\left(-\frac{g}{p}k_N+N\Phi_{\hb,\hh}(R_N^\o)\right).
\end{equation}
Therefore, for any $1<p<\infty$, it follows from \eqref{mest} and \eqref{copcon} 
that $\ho$-a.s.
\begin{equation}
\label{fullgr}
\begin{split}
\limsup_{N\to\infty}\frac{1}{pN}\log 
&E_g^\ast\left(e^{Np\,\left[\Phi_{\hb,\hh}(R_N^\o)
+\bb\,\Phi(R^\o_N)\right]}\right)\cr
&=-\frac1p\log\cN(g)
+\limsup_{N\to\infty}\frac{1}{pN}\log 
E_0^\ast\left(e^{-gk_N+Np\,\left[\Phi_{\hb,\hh}(R_N^\o)
+\bb\,\Phi(R^\o_N)\right]}\right)\cr
&\leq \frac1p\left(K(p,\hb,\hh,g)+\tM(-p\bb)-\log\cN(g)\right)<\infty.
\end{split}
\end{equation}

Next, for $-\infty<r<0<s<1$ with $r^{-1}+s^{-1}=1$, it follows from the argument 
leading to \eqref{Slimlb} that 
\begin{equation}
\label{fullgrlb}
\begin{split}
\limsup_{N\to\infty}\frac{1}{pN}\log 
&E_g^\ast\left(e^{Np\,\left[\Phi_{\hb,\hh}(R_N^\o)
+\bb\,\Phi(R^\o_N)\right]}\right)\cr
&\geq \log\frac12+\frac{1}{p}\cN(g) + \frac{1}{sp} \cN(sg)
+\frac{1}{pr}\tM(-pr\bb) >-\infty,
\end{split}
\end{equation}
since $g\in(0,\infty)$. Define
\begin{equation}
\label{spdef}
S(p) = \limsup_{N\to\infty}\frac{1}{N}\log 
E_g^\ast\left(e^{Np\,\left[\Phi_{\hb,\hh}(R_N^\o)
+ \bb\,\Phi(R^\o_N)\right]}\right).
\end{equation}
By (\ref{fullgr}--\ref{fullgrlb}), the map $p\mapsto S(p)$ is convex and finite on 
$(0,\infty)$, and hence continuous on $(0,\infty)$. It therefore follows from 
\eqref{Slimdef} that the left-hand side of \eqref{fullgr} converges to $s^{\rm que}
(\hb,\hh,\bb;g)-\log\cN(g)$ as $p \downarrow 1$. It follows from (\ref{fullgr}--\ref{fullgrlb}) 
that this limit is finite, which proves the finiteness of the map $g\mapsto s^{\rm que}
(\hb,\hh,\bb;g)$ on $(0,\infty)$.

Finally, we turn to the left-hand side of the inequality in \eqref{smineq}. For any 
$\epsilon>0$ and $Q\in\cC^{\rm fin}\cap\cR$, note from the lower semi-continuity of 
the map $Q\mapsto \bb\Phi^{-M}(Q)+\Phi_{\hb,\hh}(Q)$ that the set 
\begin{equation}
A_\epsilon(Q)=\left\{Q'\in\cP^{\rm inv}(\widetilde{E}^\N)\colon\, 
\bb\Phi^{-M}(Q')+\Phi_{\hb,\hh}(Q')
\geq \bb\Phi^{-M}(Q)+\Phi_{\hb,\hh}(Q)-\epsilon\right\}
\end{equation}
is open. This implies that
\begin{equation}
\label{fullMgr}
\begin{split}
\limsup_{N\to\infty}\frac{1}{N}\log 
&E_g^\ast\left(e^{N\,\left[\Phi_{\hb,\hh}(R_N^\o)
+ \bb\,\Phi^{-M}(R^\o_N)\right]}\right)\cr
&\geq \liminf_{N\to\infty}\frac{1}{N}
\log E_g^\ast\left(e^{N\,\left[\Phi_{\hb,\hh}(R_N^\o)
+ \bb\,\Phi^{-M}(R^\o_N)\right]}1_{A_\epsilon(Q)}(R_N^\o)\right)\cr
&\geq \bb\Phi^{-M}(Q)+\Phi_{\hb,\hh}(Q)
-\epsilon-\inf_{Q'\in A_\epsilon(Q)}I_g^{\rm que}(Q')\cr
&\geq \bb\Phi^{-M}(Q)+\Phi_{\hb,\hh}(Q)-I_g^{\rm que}(Q)-\epsilon\cr
&\geq  \bb\Phi(Q)+\Phi_{\hb,\hh}(Q)-I_g^{\rm que}(Q)-\epsilon\cr
&= \bb\Phi(Q)+\Phi_{\hb,\hh}(Q)-gm_Q-I^{\rm ann}(Q)-\log\cN(g)-\epsilon.
\end{split}
\end{equation}
The second inequality uses Theorem~\ref{qLDP}, the third inequality uses that $Q\in
A_\epsilon(Q)$, the fourth inequality follows from the fact that $\Phi\leq\Phi^{-M}$,
while the equality follows from Lemma~\ref{Iannz}. It therefore follows from 
(\ref{smineq}--\ref{trunerr}), \eqref{fullMgr} and the comment below \eqref{spdef}
that, after taking the supremum over $\cC^{\rm fin}\cap\cR$ followed by $M\to\infty$, 
$\epsilon\to 0$ and $p \downarrow 1$, 
\begin{equation}
s^{\rm que}(\hb,\hh,\bb;g) \geq \sup_{Q\in\cC^{\rm fin}\cap\cR}
\left[\bb\Phi(Q)+\Phi_{\hb,\hh}(Q)-gm_Q-I^{\rm ann}(Q)\right]
= S^{\rm que}(\hb,\hh,\bb;g).
\end{equation}

\medskip\noindent
{\bf 2b. Upper bound:} 
Let $\chi(\ho_{I_i})=\log\phi_{\hb,\hh}(\ho_{I_i})$. For $M>0$, define 
\begin{equation}
\label{trunc}
\begin{split}
\Phi^M(Q)&=\int_{\bE}(\bar\pi_{1,1}Q)(d\bo_1)(\bo_1\wedge M),\cr
\Phi_{\hb,\hh}^{M}(Q)
&=\int_{\widetilde{\hE}}(\hat\pi_{1}Q)(d\ho_{I_1})(\chi(\ho_{I_1})\wedge M).
\end{split}
\end{equation}
Note that $\Phi^M$ and $\Phi_{\hb,\hh}^{M}$ are upper semi-continuous and 
\begin{equation}
\label{phiineq1}
\begin{split}
\bb \Phi(Q)+ \Phi_{\hb,\hh}(Q)-\int_{\ho_1\geq M} \bo_1\,(\bar\pi_{1,1}Q)(d\bo_1)
- \int_{\chi\geq M}(\hat\pi_{1}Q)(d\ho_{I_1})\chi(\ho_{I_1})\leq
\bb\Phi^M(Q)+ \Phi_{\hb,\hh}^{M}(Q). 
\end{split}
\end{equation}
Therefore, for any $-\infty<q<0<p<1$ with $q^{-1}+p^{-1}=1$, the reverse H\"older 
inequality gives
\begin{equation}
\label{upineq}
\begin{split}
&\frac{1}{N} \log 
E^\ast_g\left(e^{N\left[\bb\Phi^M(R_N^\o)+\Phi^M_{\hb,\hh}(R_N^\o)\right]}\right)\cr
&\qquad \geq \frac{1}{qN} \log E^\ast_g\left(e^{-q\left[\bb\sum_{i=1}^N\bo_{k_{i}}
1_{\{\bo_{k_{i}\geq M}\}}
+ \sum_{i=1}^N\chi(\ho_{I_i})1_{\{\chi(\ho_{I_i})\geq M\}}\right]}\right)\cr    
&\qquad\qquad + \frac{1}{pN} \log E^\ast_g\left(e^{pN\left[\bb\Phi(R_N^\o)
+\Phi_{\hb,\hh}(R_N^\o)\right]}\right). 
\end{split}
\end{equation}
The rest of the proof for the upper bound follows after showing that the left-hand side 
of \eqref{upineq} gives rise to the desired upper bound, while the right-hand side 
gives rise to $s^{\rm que}(\hb,\hh,\bb;g)$ after taking appropriate limits. 

It follows from \cite{BodHoOp11}, Step 2 in the proof of Lemma B.1, that 
\begin{equation}
\label{gnfin}
\frac gq k_N + \sum_{i=1}^N\chi(\ho_{I_i})1_{\{\chi(\ho_{I_i})\geq M\}}\leq 0
\end{equation}
for $M$ large enough. Hence, for $M$ large enough, it follows from \eqref{mest}, 
\eqref{gnfin} and $q<0$ that
\begin{equation}
\label{copadtrunerr}
\begin{split}
&\limsup_{N\to\infty}\frac{1}{qN}\log 
E^\ast_g\left(e^{-q\left[\bb\sum_{i=1}^N\bo_{k_{i}}
1_{\{\bo_{k_{i}\geq M}\}}+\sum_{i=1}^N\chi(\ho_{I_i})
1_{\{\chi(\ho_{I_i})\geq M\}}\right]}\right)\cr
&=-\frac1q\log\cN(g)+\limsup_{N\to\infty}\frac{1}{qN}\log 
E^\ast_0\left(e^{-q\left[\frac{g}{q}
k_N+\sum_{i=1}^N\chi(\ho_{I_i})
1_{\{\chi(\ho_{I_i})\geq M\}}+\bb\sum_{i=1}^N\bo_{k_{i}}
1_{\{\bo_{k_{i}\geq M}\}}\right]}\right)\cr
&\geq \frac1q\left(\log 
\int_{\R}e^{-q\bb\bo_1 1_{\{\bo_{1}\geq M\}}}\bm(d\bo_1)-\log\cN(g)\right),
\end{split}
\end{equation}
which tends to zero as $M\to\infty$ followed by $q\to-\infty$. Furthermore, it follows 
from (\ref{fullgr}--\ref{spdef}) and the remark below \eqref{spdef} that 
\begin{equation}
\label{upfullgr}
s^{\rm que}(\hb,\hh,\bb;g)-\log\cN(g)
=\lim_{p \uparrow 1}\limsup_{N\to\infty}\frac{1}{pN}\log 
E^\ast_g\left(e^{pN\left[\bb\Phi(R_N^\o)+\Phi_{\hb,\hh}(R_N^\o)\right]}\right).
\end{equation}
Since $\bb\Phi^M+\Phi^M_{\hb,\hh}$ is upper semi-continuous, it follows from Dembo 
and Zeitouni \cite{DeZe98}, Lemma 4.3.6, and Theorem~\ref{qLDP} that  
\begin{equation}
\label{upfullmgr}
\begin{split}
\limsup_{N\to\infty}\log E^\ast_g
&\left(e^{N\left[\bb\Phi^M(R_N^\o)+\Phi^M_{\hb,\hh}(R_N^\o)\right]}\right)
\leq \sup_{Q\in\cP^{\rm inv}(\widetilde{E}^{\otimes\N})}
\left[\bb\Phi^M(Q)+\Phi^M_{\hb,\hh}(Q)-I_g^{\rm que}(Q)\right]\cr
&=\sup_{Q\in\cR}\left[\bb\Phi^M(Q)+\Phi^M_{\hb,\hh}(Q)-I_g^{\rm que}(Q)\right]\cr
&=\sup_{Q\in\cC^{\rm fin}\cap\cR}\left[\bb\Phi^M(Q)+\Phi^M_{\hb,\hh}(Q)-gm_Q-I^{\rm ann}(Q)\right]-\log\cN(g)\cr
&\leq \sup_{Q\in\cC^{\rm fin}\cap\cR}\left[\bb\Phi(Q)+\Phi_{\hb,\hh}(Q)-gm_Q-I^{\rm ann}(Q)\right]-\log\cN(g).
\end{split}
\end{equation}
The first equality uses that $I_g^{\rm que}(Q)=\infty$ for $Q\notin\cR$ (recall \eqref{eqgndefinitionIalgz}) and the fact that $\hb\Phi^M+\Phi^M_{\hb,\hh}\leq 
M(1+\bb)$, the second equality uses \eqref{Iannz} and the fact that $I_g^{\rm que}
=I_g^{\rm ann}$ on $\cR$. (The removal of $Q$'s with $m_Q=I^{\rm ann}(Q)=\infty$ 
again follows from $\hb\Phi^M+\Phi^M_{\hb,\hh}\leq M(1+\bb)$), the last inequality
uses that $\Phi^M(Q)\leq \Phi(Q)$ and $\Phi^M_{\hb,\hh}(Q)\leq \Phi_{\hb,\hh}(Q)$. 
Therefore, combining (\ref{upineq}--\ref{upfullmgr}) and letting $M\to\infty$ and 
$p\uparrow 1$ in the appropriate order, we conclude the proof of the upper bound.

\medskip\noindent
{\bf Step 3.} For $g=0$ we show that 
\begin{equation}
\label{eqatg0}
\lim_{g\downarrow0}s^{\rm que}(\hb,\hh,\bb;g)=\lim_{g\downarrow0}S^{\rm que}(\hb,\hh,\bb;g)
=S^{\rm que}(\hb,\hh,\bb;0)=S_*^{\rm que}(\hb,\hh,\bb)
\end{equation}
(recall \eqref{Sdefalt}). The first equality follows from Steps 1 and 2 of the proof.
The second uses that the map  $g\mapsto S^{\rm que}(\hb,\hh,\bb;g)$ is decreasing and 
lower semi-continuous  on $[0,\infty)$. 

Furthermore,  note from \eqref{Sdef} and \eqref{Sdefalt} that 
\begin{equation}
\label{sastgeqs}
\begin{split}
S_*^{\rm que}(\hb,\hh,\bb)
&=\sup_{Q\in\cC^{\rm fin}}\left[\bb\Phi(Q)
+\Phi_{\hb, \hh}(Q)-I^{\rm que}(Q)\right]\cr
&\geq \sup_{Q\in\cC^{\rm fin}\cap\cR}\left[\bb\Phi(Q)
+\Phi_{\hb, \hh}(Q)-I^{\rm que}(Q)\right]\cr
&= S^{\rm que}(\hb,\hh,\bb;0).
\end{split}
\end{equation} 
Here we use that $I^{\rm que}=I^{\rm ann}$ on $\cR$. The rest of the 
proof will follow once we show the reverse of \eqref{sastgeqs}. To do so we  proceed as 
follows:

For $L\in\N$ and $-\infty<q<0<p<1$, with $p^{-1}+q^{-1}=1$, it follows from \eqref{Slimdef} and the 
reverse H\"older inequality that 
\begin{equation}
\label{squelb}
\begin{split}
&  s^{\rm que}(\hb,\hh,\bb;g)-\log\cN(g)= \limsup_{N\rightarrow\infty}\frac{1}{N} 
\log E_g^\ast\left(e^{N\left[\bar\beta \Phi(R_N^\o)+\Phi_{\hb,\hh}(R_N^\o)\right]}\right) \cr
&\geq\limsup_{N\rightarrow\infty}\frac{1}{pN} 
\log E_g^\ast\left(e^{Np\left[\bar\beta \Phi^{-L}(R_N^\o)+\Phi_{\hb,\hh}(R_N^\o)\right]}\right)+
\limsup_{N\rightarrow\infty}\frac{1}{qN}\log E_g^\ast\left(e^{Nq\bar\beta\sum_{i=1}^N 
\bar\o_{k_i}1_{\{\bar\o_{k_i}<-L\}}}\right)\cr
&\geq - \frac1p\log\cN(g)+ \frac1p\sup_{Q\in\cC^{\rm fin}\cap \cR}\left[p\left(\bar\beta\Phi^{-L}(Q)+
\Phi_{\hb,\hh}(Q)\right)-gm_Q-I^{\rm ann}(Q)\right]\cr
&+\frac1q\log \int_\R e^{\bar\beta q 
\bar\o_11_{\{\bar\o_{1}<-L\}}}\bar\mu(d\bar\o_1)\cr
&=\frac1p s_{L}^{\rm que}(\hb,\hh,\bb,p;g)+  \frac1q\log \int_\R e^{\bar\beta q 
\bar\o_1 1_{\{\bar\o_{1}<-L\}}}\bar\mu(d\bar\o_1)- \frac1p\log\cN(g).
\end{split}
\end{equation}
Here, $\Phi^{-L}$ is defined in \eqref{pintrunc}. The second inequality uses Steps 1 and 2 
of the proof, particularly \eqref{mest}  and the remark below \eqref{B5}. 
 
Below we show that,  for any  $p\in[0,\infty)$ and $L\in\N$,
 \begin{equation}
 \label{claim1}
 \lim_{g\downarrow0} s_{L}^{\rm que}(\hb,\hh,\bb,p;g)\geq S_{\ast}^{\rm que}(\hb,\hh,\bb,p)
 =\sup_{Q\in\cC^{\rm fin}}\left[p\left(\bar\beta\Phi(Q)+
\Phi_{\hb,\hh}(Q)\right)-I^{\rm que }(Q)\right].
 \end{equation}
Therefore,  upon taking  limits in the  order  $g\rightarrow0$,  $L\rightarrow\infty$
 and $\liminf_{p\rightarrow1}$, it follows from (\ref{squelb}, \ref{claim1}) and the 
 lower sime-continuity of the map 
$p\mapsto S_{\ast}^{\rm que}(\hb,\hh,\bb,p)$ on $[0,\infty)$  that 
\begin{equation}
\lim_{g\downarrow0} s^{\rm que}(\hb,\hh,\bb;g)\geq S_{\ast}^{\rm que}(\hb,\hh,\bb).
\end{equation}

The following lemma, which is  proved in  \cite{BodHoOp11}, Appendix C,
 will be used in the proof of  \eqref{claim1}.

\begin{lemma}
\label{squeat0rel}
Suppose that $E$ is finite. Then for every $Q\in\cP^\mathrm{inv}(\widetilde{E}^\N)$
there exists a sequence $(Q_n)$ in $\cR^\mathrm{fin}$ such that ${\rm w}-\lim_{n\to\infty}
Q_n = Q$ and $\lim_{n\to\infty} I^\mathrm{ann}(Q_n)=I^\mathrm{que}(Q)$, where  
$\cR^\mathrm{fin} = \{Q\in\cR\colon\,
m_Q<\infty\}$ and ${\rm w}-\lim$ means weak limit. 
\end{lemma}
In our case $E=\R^2$.  

For the rest of the proof we proceed as in \cite{BodHoOp11}, Appendix C. For $M\in\N$, let
\begin{equation}
\begin{aligned}
 D^\ast_M&=\left\{ -M,-M+1/M,\ldots,M-1/M,M  \right\}  
\end{aligned}
\end{equation}
be a grid of spacing $1/M$ in $[-M,M]$, which represents a finite 
set of letters approximating $\R$. Put $D_{M}= D^\ast_M\times  D^\ast_{M}$ and let
$\widetilde{D_{M}}=\cup_{n\in\N}(D^\ast_{M})^n$ be the set of finite words drawn from $D^\ast_{M}$. 
Furthermore, let $\hat T_{M}: \R\rightarrow  D^\ast_{M}$ and  $\bar T_{M}: \R\rightarrow 
D^\ast_{M}$ be the letter maps 
\begin{equation}\label{TMs}
\begin{aligned}
\hat T_M(x)&=\left\{\begin{array}{ll}
M &\mbox{ for} \, x\in[M,\infty),\\
\lceil xM\rceil/M &\mbox{ for} \, x\in(-M,M),\\
-M &\mbox{ for} \, x\in(-\infty,-M],
\end{array}
\right.\\
\bar T_{M}(x)&=\left\{\begin{array}{ll}
M &\mbox{ for} \, x\in[M,\infty),\\
\lfloor xM\rfloor/M &\mbox{ for} \, x\in(-M,M),\\
-M &\mbox{ for} \, x\in(-\infty,-M].
\end{array}
\right.
\end{aligned}
\end{equation}
Thus, $\hat T_M$ moves points upwards  on $(-\infty,M)$, while 
$\bar T_M$ does the opposite on $(-M,\infty)$. 
Let $T_M:\R^2\rightarrow D_M$ be such that  $T_M((x,y))=
(\hat T_{M}(x), \bar T_{M}(y))$, for all $(x,y)\in\R^2$. This map naturally extends to $\widetilde{\R^2}$, 
$\widetilde{\R^2}^{\otimes\N}$ and to the sets of probability measures on them. Furthermore, 
put $\hat\mu_M=\hat\mu\circ \hat T_M^{-1}$, $\bar\mu_M=\bar\mu\circ \bar T^{-1}_M$, 
\begin{equation}
\begin{aligned}
\Phi_{\hat\beta,\hat h,M}(Q^M)&=\int_{\widetilde{D^\ast_M}}\hat\pi_1Q^M(dx)\log 
\phi^M_{\hat\beta,\hat h}(x), \\
\Phi_{L,M}(Q^M)&=\int_{D^\ast_M}\bar\pi_{1,1}Q^M(dy_1) \phi^M_{L}(y_1),
\end{aligned}
\end{equation}
where 
\begin{equation}\label{B37a}
\begin{aligned}
\phi^M_{\hat\beta,\hat h}(x)&=\left\{\begin{array}{ll}
\phi_{\hat\beta,\hat h}(x)&\mbox{for } x=(x_1,\ldots,x_m)\in(D_M^\ast\setminus\{M\})^m,\\
\frac12 &\mbox{otherwise,}
\end{array}
\right.\\
\phi^M_{L}(y_1)&=\left\{\begin{array}{ll}
y_1&\mbox{for } y_1\in D_M^\ast \cap (-L,M],\\
-L &\mbox{otherwise.}
\end{array}
\right.
\end{aligned}
\end{equation}
Here, $\hat\pi_1$ is the projection onto the first word in the word sequence formed
 by the copolymer disorder $\hat\o$,  and $\bar\pi_{1,1}$ is the projection onto the
  first letter of the first word in the word sequence  formed by the pinning 
 disorder $\bar\o$.

Next, note from \eqref{squelb} that 
\begin{equation}\label{squeL}
 s_{L}^{\rm que}(\hb,\hh,\bb,p;g)- \log\cN(g)
=\limsup_{N\rightarrow\infty} \frac1N\log E_g^\ast\left(e^{Np\left[\bar\beta 
\Phi^{-L}(R_N^\o)+\Phi_{\hb,\hh}(R_N^\o)\right]}\right).
\end{equation}
For the rest of the proof we assume that $L\leq M$. Consider a combined model with 
disorder taking values in $D_M^\ast$ instead of $\R$, and with $\hat\mu$ and $\bar \mu$ 
replaced by $\hat\mu_M$ and $\bar\mu_M$, respectively. In this set-up, if in \eqref{squeL}
  we replace $\Phi_{\hb,\hh}$ and $\Phi^{-L}$  by $\Phi_{\hb,\hh, M}$ and 
$\Phi_{L,M}$ respectively, then we obtain
\begin{equation}\label{squeLM}
 s_{L,M}^{\rm que}(\hb,\hh,\bb,p;g)- \log\cN(g)
=\limsup_{N\rightarrow\infty} \frac1N\log E_g^\ast\left(e^{Np\left[\bar\beta\Phi_
{L,M}(R_{N}^{M,\o})+\Phi_{\hb,\hh, M}(R_{N}^{M,\o})\right]}\right).
\end{equation}
Here, $R_{N}^{M,\o}$ is the empirical process of $N$-tuple of words cut out from the i.i.d. 
sequence of letters drawn from $D_M$ according to the marginal law $\hat\mu_M\otimes\bar\mu_M$. 
Note from \eqref{pintrunc} that
\begin{equation}\label{B40a}
\begin{aligned}
\Phi^{-L}(Q)&=\int_{\R}\bar\pi_{1,1}Q(dy_1) \phi_L(y_1),\\
\phi_L(y_1)&=\left\{\begin{array}{ll}
y_1 &\mbox{for} \, y_1\in (-L,\infty)\\
-L &\mbox{otherwise.}
\end{array}
\right.
\end{aligned}
\end{equation}
It therefore  follows from (\ref{squeL}--\ref{squeLM}) that 
\begin{equation}\label{squeLMeq}
 s_{L}^{\rm que}(\hb,\hh,\bb,p;g)\geq s_{L,M}^{\rm que}(\hb,\hh,\bb,p;g),
\end{equation}
 since, for each $x,y\in\R$, $ \phi_L(y)\geq  \phi^M_L(\bar T_M(y))$ and 
 $\phi_{\hat\beta,\hat h}(x)\geq \phi^M_{\hat\beta,\hat h}(\hat T_M(x))$ by  \eqref{TMs}, 
 \eqref{B37a} and \eqref{B40a}. 
 
 In the sequel we will write, respectively, $\cC^{\rm fin}_M$, $\cR_M$, $I^{\rm que}_M$,   
 $I^{\rm ann}_M$  associated with $D_M$ and $\hat\mu_M\otimes\bar\mu_M$ as the analogues 
 of $\cC^{\rm fin}$, $\cR$, $I^{\rm que}$, $I^{\rm ann}$ associated with $\R^2$ and 
 $\hat\mu\otimes\bar\mu$. It follows from Steps 1 and 2 above that 
\begin{equation}\label{squeLMvf}
  s_{L,M}^{\rm que}(\hb,\hh,\bb,p;g)=\sup_{Q^M\in\cC_M^{\rm fin}\cap \cR_M}\left[
  p\bar\beta\Phi_{L,M}(Q^M)+p\Phi_{\hb,\hh, M}(Q_M)-gm_{Q_M}-I^{\rm ann}_M(Q^M)\right].
\end{equation}
Note from  Lemma \ref{squeat0rel} that for any $Q^M\in \cC^{\rm fin}_M$ 
with $I_M^{\rm que}(Q^M)<\infty$ there exists a sequence $(Q^M_n)$ in 
$\cC_M^{\rm fin}\cap \cR_M$  such that ${\rm w}-\lim_{n\rightarrow\infty}\,Q^M_n=Q^M$ 
and $\lim_{n\rightarrow\infty} I_M^{\rm ann}(Q^M_n)=I_M^{\rm que}(Q^M)$, because $D_M$ 
is finite. Therefore 
\begin{equation}\label{B43y}
\begin{aligned}
\lim_{g\downarrow0}s_{L,M}^{\rm que}(\hb,\hh,\bb,p;g)&=s_{L,M}^{\rm que}(\hb,\hh,\bb,p;0)\\
&=\sup_{Q^M\in\cC_M^{\rm fin}\cap \cR_M}\left[
  p\bar\beta\Phi_{L,M}(Q^M)+p\Phi_{\hb,\hh, M}(Q^M)-I^{\rm ann}_M(Q^M)\right]\\
&=\sup_{Q^M\in\cC_M^{\rm fin}}\left[
  p\bar\beta\Phi_{L,M}(Q^M)+p\Phi_{\hb,\hh, M}(Q^M)-I^{\rm que}_M(Q^M)\right].
  \end{aligned}
\end{equation} 
The first equality uses the second equality  of  \eqref{eqatg0} and the remark below it. The 
third equality uses the above remark about Lemma  \ref{squeat0rel}, the boundedness and the 
continuity of the map\\ $Q^M\mapsto 
p\bar\beta\Phi_{L,M}(Q^M)+p\Phi_{\hb,\hh, M}(Q_M)$ on $\cC^{\rm fin}_M$, and  the fact that 
$I_M^{\rm que}=I_M^{\rm ann}$ on $\cC_M^{\rm fin}\cap \cR_M$. Note also that those $Q^M$'s 
with $I_M^{\rm que}(Q^M)=\infty$ do not contribute to the above supremum. For each 
$Q\in\cP^{\rm inv}(\widetilde{\R^2}^{\otimes \N})$, define $[Q]^M=Q\circ T_M^{-1}\in 
\cP^{\rm inv}(\widetilde{D_M}^{\otimes \N}).$ Since $T_M$ is a projection map we have that 
$I^{\rm que}_M([Q]^M)\leq I^{\rm que}(Q)$. It therefore  follows from \eqref{B43y} that
\begin{equation}\label{B43yy}
\begin{aligned}
\lim_{g\downarrow0}s_{L,M}^{\rm que}(\hb,\hh,\bb,p;g)
&=\sup_{Q\in\cC^{\rm fin}}\left[
  p\bar\beta\Phi_{L,M}([Q]^M)+p\Phi_{\hb,\hh, M}([Q]^M)-I^{\rm que}_M([Q]^M)\right]\\
  &\geq \sup_{Q\in\cC^{\rm fin}}\left[
  p\bar\beta\Phi_{L,M}([Q]^M)+p\Phi_{\hb,\hh, M}([Q]^M)-I^{\rm que}(Q)\right]
  \end{aligned}
\end{equation}

Moreover,  for any $Q\in\cP^{\rm inv}(\widetilde{\R^2}^{\otimes \N})$ and $y\in\R$,  
${\rm w}-\lim_{M\rightarrow\infty}[Q]^M=Q$, \\
$\lim_{M\rightarrow\infty}\phi^M_{\hat\beta,\hat h}(\hat T_M(y))=
\phi_{\hat\beta,\hat h}(y) $  and $\lim_{M\rightarrow\infty}\phi^M_{L}(\bar T_M(y))=
\phi_{L}(y)$. Fatou's lemma therefore tells us that $\liminf_{M\rightarrow\infty}
\Phi_{\hb,\hh, M}([Q]^M)\geq\Phi_{\hb,\hh}(Q)$  and $\liminf_{M\rightarrow\infty}
\Phi_{L, M}([Q]^M)\geq\Phi^{-L}(Q)$. These, together with  \eqref{squeLMeq} and 
\eqref{B43y}, imply that 
\begin{equation}\label{B43yyy}
\begin{aligned}
\lim_{g\downarrow0}s_{L}^{\rm que}(\hb,\hh,\bb,p;g)&\geq
\lim_{g\downarrow0}s_{L,M}^{\rm que}(\hb,\hh,\bb,p;g)\\
  &\geq \sup_{Q\in\cC^{\rm fin}}\left[
  p\bar\beta\Phi^{-L}(Q)+p\Phi_{\hb,\hh}(Q)-I^{\rm que}(Q)\right]\\
  &\geq \sup_{Q\in\cC^{\rm fin}}\left[
  p\bar\beta\Phi(Q)+p\Phi_{\hb,\hh}(Q)-I^{\rm que}(Q)\right]\\
  &=S_\ast^{\rm que}(\hb,\hh,\bb,p).
  \end{aligned}
\end{equation}
The last inequality uses that $\Phi^{-L}\geq\Phi.$ 
\end{proof}


\section{Proof of Lemma~\ref{hizlimts} }
\label{appc} 

In this Appendix we prove Lemma~\ref{hizlimts}. To do so we need another lemma,
which we state and prove in Section~\ref{appc1}. In Section~\ref{appc2} we use 
this lemma to prove Lemma~\ref{hizlimts}.

\subsection{A preparatory lemma}
\label{appc1} 

\begin{lemma}
\label{hizfin}
For every $\hb>0$, $\bb\geq0$ and $\hh\geq \hat h^{\rm ann}_c(\hb)$,
\begin{equation}
\label{hizmain}
S^{\rm que}(\hb,\hh,\bb;0) \leq \sup_{\bQ\in\bar\cC^{\rm fin}}
\left[\bb\Phi(\bQ)+\Xi_{\hb, \hh}(r_{\bQ})-\bar I^{\rm que}(\bQ)\right], 
\end{equation}
where
\begin{equation}
\Xi_{\hb, \hh}(r_{\bQ})
=\sum_{n\in\N}r_{\bQ}(n)\log\left(\tfrac12
\left[1+e^{n[\hM(2\hb)-2\hb \hh]}\right]\right)
\end{equation}
and $r_{\bQ}$ is the word length distribution under $\bQ$.
\end{lemma}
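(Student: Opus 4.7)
The plan is to start from the alternative variational characterisation
\begin{equation*}
S^\mathrm{que}(\hb,\hh,\bb;0)=\sup_{Q\in\cC^\mathrm{fin}}\bigl[\bb\,\Phi(Q)+\Phi_{\hb,\hh}(Q)-I^\mathrm{que}(Q)\bigr]
\end{equation*}
provided by Theorem~\ref{varfloc}(ii) and to show that for every $Q\in\cC^\mathrm{fin}$ the integrand is dominated by its analogue evaluated at the pinning projection $\bar\pi Q\in\bar\cC^\mathrm{fin}$. Since $\Phi(Q)=\Phi(\bar\pi Q)$ and the word-length distribution $r_Q$ is shared by $Q$ and $\bar\pi Q$, it suffices to establish
\begin{equation*}
\Phi_{\hb,\hh}(Q)-I^\mathrm{que}(Q)\leq \Xi_{\hb,\hh}(r_Q)-\bar I^\mathrm{que}(\bar\pi Q)
\quad\forall\,Q\in\cC^\mathrm{fin},
\end{equation*}
after which the supremum passes to $\bar\cC^\mathrm{fin}$ via $Q\mapsto\bar\pi Q$.

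The key estimate will come from the Gibbs variational inequality $\int f\,dq\leq h(q\mid p)+\log\int e^f\,dp$ applied \emph{conditionally} on word lengths and on the pinning word, at the level of the $N$-word marginal $\pi_N Q$. Given the length vector $\vec n=(n_1,\dots,n_N)$, the conditional reference factorises as $(\hm\otimes\bm)^{n_1}\otimes\cdots\otimes(\hm\otimes\bm)^{n_N}$; applying Gibbs to $\vec{\hat\omega}\mapsto\sum_{i=1}^N\log\phi_{\hb,\hh}(\hat\omega^{(i)})$ against $\hm^{n_1}\otimes\cdots\otimes\hm^{n_N}$, integrating first against the conditional law of $\vec{\hat\omega}$ given $(\vec{\bar\omega},\vec n)$ under $\pi_N Q$ and then against the joint law of $(\vec{\bar\omega},\vec n)$, and finally using $\int\phi_{\hb,\hh}\,d\hm^n=\tfrac12\bigl[1+e^{n[\hM(2\hb)-2\hb\hh]}\bigr]$ together with the stationarity of $Q$, yields
\begin{equation*}
N\,\Phi_{\hb,\hh}(Q)\leq \bigl[h(\pi_N Q\mid q_{\rho,\hm\otimes\bm}^{N})-h(\bar\pi_N Q\mid q_{\rho,\bm}^{N})\bigr]+N\,\Xi_{\hb,\hh}(r_Q),
\end{equation*}
the bracketed difference being exactly the averaged conditional-on-$\vec{\bar\omega}$ relative entropy of the $\ho$-part against $\hm^{n_1+\cdots+n_N}$, by the chain rule for relative entropy. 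Dividing by $N$ and letting $N\to\infty$ converts the bracket into $I^\mathrm{ann}(Q)-\bar I^\mathrm{ann}(\bar\pi Q)$ via the defining formula~\eqref{spentrdef} of specific relative entropy (both specific entropies being finite because $Q\in\cC^\mathrm{fin}$).

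What remains is to upgrade $I^\mathrm{ann}$ to $I^\mathrm{que}$ on both sides: in view of~\eqref{eqnratefctexplicitalg} and $m_Q=m_{\bar\pi Q}$, this reduces to $H(\Psi_Q\mid(\hm\otimes\bm)^{\otimes\N})\geq H(\Psi_{\bar\pi Q}\mid\bm^{\otimes\N})$. Since the letter-level projection $\bar\pi\colon E^\N\to\bE^\N$ commutes with both the shift $\theta$ and the concatenation $\kappa$, a direct inspection of~\eqref{PsiQdef} gives $\bar\pi\Psi_Q=\Psi_{\bar\pi Q}$, so the required inequality is an instance of the contraction principle for specific relative entropy. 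Combining produces the desired pointwise bound $\Phi_{\hb,\hh}(Q)\leq\Xi_{\hb,\hh}(r_Q)+I^\mathrm{que}(Q)-\bar I^\mathrm{que}(\bar\pi Q)$. The hypothesis $\hh\geq\hat h_c^\mathrm{ann}(\hb)$ enters only to ensure $\hM(2\hb)-2\hb\hh\leq 0$, so that $\Xi_{\hb,\hh}(r_Q)\in[-\log 2,0]$ is unambiguously defined; the main technical subtlety lies in the $N\to\infty$ passage, which relies on the chain-rule decomposition of $h(\pi_N Q\mid q_{\rho,\hm\otimes\bm}^{N})$ and on controlling the difference of the two non-decreasing sequences $\tfrac{1}{N}h(\pi_N Q\mid\cdot)$ and $\tfrac{1}{N}h(\bar\pi_N Q\mid\cdot)$ as they converge to their respective specific relative entropies.
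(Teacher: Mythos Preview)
Your argument is correct, and it takes a genuinely different route from the paper.

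The paper proceeds probabilistically: it starts from the representation $s^{\rm que}(\hb,\hh,\bb;0)=\limsup_N\tfrac1N\log S_N^\o$ with $S_N^\o=E_0^\ast\bigl(e^{N[\bb\Phi(R_N^\o)+\Phi_{\hb,\hh}(R_N^\o)]}\bigr)$, takes the \emph{partial} expectation $\E_{\ho}$ over the copolymer disorder (which turns $\Phi_{\hb,\hh}$ into $\Xi_{\hb,\hh}$ by a direct computation), and then uses a Markov inequality plus Borel--Cantelli to bound the $\o$-a.s.\ limsup by $\limsup_N\tfrac1N\log\E_{\ho}S_N^\o$. The latter is a functional of $\bo$ only, and is identified with the pinning-type variational formula via the machinery of Appendix~B.

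Your approach stays entirely on the variational side: you start from $S_*^{\rm que}(\hb,\hh,\bb)$ of Theorem~\ref{varfloc}(ii) and, for each fixed $Q\in\cC^{\rm fin}$, integrate out the $\ho$-dependence of $\Phi_{\hb,\hh}$ via the Gibbs inequality applied to the conditional law of $\vec{\ho}$ given $(\vec{\bo},\vec n)$ under $\pi_N Q$. The chain rule then recognises the resulting entropy term as $h(\pi_N Q\mid q_{\rho,\hm\otimes\bm}^N)-h(\bar\pi_N Q\mid q_{\rho,\bm}^N)$, and the passage $N\to\infty$ is harmless here because both specific entropies are finite for $Q\in\cC^{\rm fin}$. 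The upgrade from $I^{\rm ann}$ to $I^{\rm que}$ via $\bar\pi\Psi_Q=\Psi_{\bar\pi Q}$ and the data-processing inequality for specific relative entropy is clean.

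The two methods are dual to one another: the paper's partial annealing $\E_{\ho}$ at the partition-function level corresponds exactly, under the Donsker--Varadhan duality, to your Gibbs bound at the level of the rate function. Your route is more self-contained once Theorem~\ref{varfloc}(ii) is in hand (it avoids a second appeal to Borel--Cantelli and the Varadhan-type identification of Appendix~B), whereas the paper's route makes the ``annealing over $\ho$'' mechanism completely transparent and does not rely on the explicit form \eqref{eqnratefctexplicitalg} of $I^{\rm que}$.
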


\begin{proof}
Throughout the proof, $\hb>0$, $\bb\geq0$ and $\hh\geq \hat h^{\rm ann}_c(\hb)$ are 
fixed. Put
\begin{equation}
S^\o_N = E_0^\ast\left(e^{N\left[\bb\Phi(R_N^\o)
+\Phi_{\hb, \hh}(R_N^\o)\right]}\right)
= E_0^\ast\left(e^{N\left[\bb\Phi(R_N^{\bo})
+\Phi_{\hb, \hh}(R_N^{\ho})\right]}\right).
\end{equation}
Note from \eqref{B5} and the Borel-Cantelli lemma that, for every $\epsilon>0$ 
and $\bo$-a.s., there exists an $N_0=N_0(\bo,\epsilon)<\infty$ such that 
\begin{equation}
\label{C3}
E_0^\ast\left(e^{N\bb\Phi(R_N^\o)}\right)
= E_0^\ast\left(e^{N\bb\Phi(R_N^{\bo})}\right)
\leq e^{N [\bar M(-\bb)+\epsilon]} \qquad \forall\,N\geq N_0.
\end{equation}
Therefore, $\bo$-a.s.\ and for all $N\geq N_0$, 
\begin{equation}
\begin{split}
\E_{\ho}\left(S^\o_N\right)
&=\sum_{0=k_0<k_1<\ldots<k_N<\infty}\prod_{i=1}^N \rho(k_i-k_{i-1})
\,e^{\bb \bo_{k_i}} \tfrac12
\left[1+\E_{\ho}\left(e^{-2\hb\sum_{k=k_{i-1}+1}^{k_i}(\ho_k+\hh)}\right)\right]\cr
&=\sum_{0=k_0<k_1<\ldots<k_N<\infty}\prod_{i=1}^N \rho(k_i-k_{i-1})\,
e^{\bb \bo_{k_i}} \tfrac12
\left(1+e^{(k_i-k_{i-1})[\hM(2\hb)-2\hb \hh]}\right)\cr
&=\sum_{0=k_0<k_1<\ldots<k_N<\infty}
\left(\prod_{i=1}^N \rho(k_i-k_{i-1})\right)\,
\left(e^{N[\bb \Phi(R_N^\o)+ \Xi_{\hb,\hh}(r_{R_N^\o})]}\right)\cr
&=E_0^\ast\left(e^{N[\bb  \Phi(R_N^\o)+\Xi_{\hb, \hh}(r_{R_N^\o})]}\right)
<\infty.
\end{split}
\end{equation}
Finiteness follows from \eqref{C3} and the fact that $\Xi_{\hb, \hh}\leq0$ if $\hh\geq
\hat h^{\rm ann}_c(\hb)$. Therefore, for every $\delta>0$, $\bo$-a.s.\ and $N\geq N_0$, 
we have 
\begin{equation}
\begin{split}
\P_{\ho}\left(\frac1N\log S^\o_N\geq \frac1N\log\E_{\ho}( S^\o_N)+\delta\right)
=\P_{\ho}\left( S^\o_N\geq \E_{\ho}( S^\o_N) \,e^{N\delta}\right)\leq e^{-N\delta}. 
\end{split}
\end{equation}
From the Borel-Cantelli lemma we therefore obtain that $\o$-a.s. 
\begin{equation}
\begin{split}
s^{\rm que}(\hb,\hh,\bb;0)=\limsup_{N\rightarrow\infty}\frac1N\log S^\o_N
&\leq \limsup_{N\rightarrow\infty}\frac1N\log\E_{\ho}(S^\o_N)+\delta\cr
&=\sup_{\bQ\in\bar\cC^{\rm fin}}\left[\bb  \Phi(\bQ)+ \Xi_{\hb,\hh}(r_{\bQ})
-\bar I^{\rm que}(\bQ)\right]+\delta.
\end{split}
\end{equation}
The equality uses Steps 1 and 2 in the proof of Lemma~\ref{varlem} and the observation 
that $\Xi_{\hb, \hh}$ is independent of $\o$ (i.e., only pinning disorder is present), 
and $-\log 2\leq \Xi_{\hb,\hh}\leq0$ for $\hh\geq\hat h_c^{\rm ann}(\hb)$, where we 
use \eqref{eqgndefinitionIalg} instead of \eqref{eqgndefinitionIalgz}. Finally, let 
$\delta \downarrow 0$.
\end{proof}


\subsection{Proof of Lemma \ref{hizlimts}} 
\label{appc2}

\begin{proof} 
Throughout the proof, $\bb\geq0$ and $\hb>0$ are fixed and $\hh\geq \hat h^{\rm ann}_c(\hb)$. 
Note from \eqref{coPhidef} and \eqref{phidef} that $\Phi_{\hb,\infty}\equiv-\log 2$. 
Therefore, replacing $\bb\bar\Phi+\Phi_{\hb, \hh}$ by $\bb\bar\Phi-\log 2$ in
\eqref{Sdefalt}, we get
\begin{equation}
\begin{split}
S^{\rm que}(\hb,\infty,\bb;0)
&=S_\ast^{\rm que}(\hb,\infty,\bb)=s^{\rm que}(\hb,\infty,\bb;0)\cr
&=\limsup_{ N\rightarrow\infty}\frac1N\log 
E^\ast_0\left(e^{N[\log\frac12+\bb \Phi(R^{\bo}_N)]}\right) \cr
&=-\log 2+\sup_{\bQ\in\bar\cC^{\rm fin}}
\left[\bb\Phi(\bQ)-\bar I^{\rm que}(\bQ)\right]\cr
&=-\log 2+\bar h_c^{\rm que}(\bb).
\end{split}
\end{equation} 
The fourth equality follows from the proof of Lemma~\ref{varlem}, while the last equality 
uses \cite{ChdHo10}, Theorem 1.3. Next, note that
\begin{equation}
S^{\rm que}(\hb,\infty^-,\bb;0)
= \lim_{\hh\uparrow\infty}S^{\rm que}(\hb,\hh,\bb;0)
\geq S^{\rm que}(\hb,\infty,\bb;0),
\end{equation} 
since the map $\hh\mapsto S^{\rm que}(\hb,\hh,\bb;0)$ is non-increasing. For $\hh
\geq\hat h^{\rm ann}_c(\hb)$ it follows from \eqref{hizmain} that 
\begin{equation}
\label{hizmain1}
\begin{split}
S^{\rm que}(\hb,\hh,\bb;0) &\leq \sup_{\bQ\in\bar\cC^{\rm fin}}
\left[\bb\Phi(\bQ)+\Xi_{\hb, \hh}(r_{\bQ})-\bar I^{\rm que}(\bQ)\right]\cr
&=\sup_{r\in\cP(\N);\atop{m_r<\infty}}\sup_{\bQ\in\bar\cC^{\rm fin};\atop{r_{\bQ}=r}}
\left[\bb\Phi(\bQ)+\Xi_{\hb, \hh}(r)-\bar I^{\rm que}(\bQ)\right]\cr
&\leq\sup_{r\in\cP(\N);\atop{m_r<\infty}}\left(\Xi_{\hb,\hh}(r)
+\sup_{\bQ\in\bar\cC^{\rm fin}}
\left[\bb\Phi(\bQ)-\bar I^{\rm que}(\bQ)\right]\right)\cr
&\leq\log\left[\tfrac12\left(1+e^{[\hM(2\hb)-2\hb \hh]}\right)\right]
+\sup_{\bQ\in\bar\cC^{\rm fin}}
\left[\bb\Phi(\bQ)-\bar I^{\rm que}(\bQ)\right]\cr
&=\log\left[\tfrac12\left(1+e^{[\hM(2\hb)-2\hb \hh]}\right)\right]
+\bar h^{\rm que}_c(\bb).
\end{split}
\end{equation}
The third inequality uses that, for $\hh\geq \hat h^{\rm ann}_c(\hb)$, 
$\Xi_{\hb,\hh}(r)\leq\log\left[\tfrac12\left(1+e^{[\hM(2\hb)-2\hb \hh]}\right)\right]$ 
for all $r\in\cP(\N)$. Therefore  
\begin{equation}
\lim_{\hh\uparrow\infty} S^{\rm que}(\hb,\hh,\bb;0)
\leq -\log 2 +\bar h^{\rm que}_c(\bb)=S^{\rm que}(\hb,\infty,\bb;0).
\end{equation}
\end{proof}



\newpage


\begin{thebibliography}{99}

\bibitem{Bi08}
M.\ Birkner, 
Conditional large deviations for a sequence of words,
Stoch.\ Proc.\ Appl.\ 118 (2008) 703--729.

\bibitem{BiGrdHo10}
M.\ Birkner, A.\ Greven and F.\ den Hollander,
Quenched LDP for words in a letter sequence,
Probab.\ Theory Relat.\ Fields 148 (2010) 403--456.

\bibitem{BodHoOp11}
E.\ Bolthausen, F.\ den Hollander and A.A.\ Opoku,
A copolymer near a selective interface: variational characterization of the free energy, arXiv:1110.1315.

\bibitem{CaGiTo10} 
F.\ Caravenna, G.\ Giacomin and F.L.\ Toninelli,
Copolymers at selective interfaces: settled issues and open problems,
in: \emph{Probability in Complex Physical Systems}, Proceedings in Mathematics, 
Vol.\ 11, Springer, Berlin, 2012, pp.\ 289--310. 
 
\bibitem{ChdHo10}
D.\ Cheliotis and F.\ den Hollander,
Variational characterization of the critical curve for pinning of
random polymers, EURANDOM Report 2010--024, arXiv:1005.3661v1, 
to appear in Ann.\ Probab.

\bibitem{DeZe98}
A.\ Dembo and O.\ Zeitouni,
\emph{Large Deviations Techniques and Applications} (2nd.\ ed.),
Springer, New York, 1998. 

\bibitem{Gi04} 
G.\ Giacomin, 
Localization phenomena in random polymer models, 
Preprint 2004. Available online:
http://www.proba.jussieu.fr/pageperso/giacomin/pub/publicat.html

\bibitem{GiTo06} 
G.\ Giacomin and F.L.\ Toninelli, 
The localized phase of disordered copolymer with adsorption, 
Alea 1 (2006) 149--180.

\bibitem{GiTo06a}
G.\ Giacomin and F.L.\ Toninelli, 
Smoothing of depinning transitions for directed polymers with quenched disorder, 
Phys.\ Rev.\ Lett.\ 96 (2006) 070602.

\bibitem{GiTo06b}
G.\ Giacomin and F.L.\ Toninelli, 
Smoothing effect of quenched disorder on polymer depinning transitions, 
Commun.\ Math.\ Phys. 266 (2006) 1--16.

\bibitem{Gi07}
G.\ Giacomin,
\emph{Random Polymer Models}, Imperial College Press, World Scientific,
London, 2007.

\bibitem{dHo09}
F.\ den Hollander,
\emph{Random Polymers}, Lecture Notes in Mathematics 1974, Springer, Berlin,
2009.

\bibitem{Pert09}
N. P\'etr\'elis, 
Copolymer at selective interfaces and pinning potentials: weak coupling limits,
Ann.\ Inst.\ H.\ Poincar\'e 45 (2009) 175–200.

\bibitem{To07}
F.L.\ Toninelli,
Disordered pinning models and copolymers: beyond annealed bounds,
Ann.\ Appl.\ Probab.\ 18 (2008) 1569--1587.

\end{thebibliography}
\end{document}